\theoremstyle{plain}
\newtheorem{theorem}{Theorem}
\newtheorem{lemma}[theorem]{Lemma}
\newtheorem{corollary}[theorem]{Corollary}
\newtheorem{proposition}[theorem]{Proposition}
\theoremstyle{definition}
\newtheorem{definition}{Definition}
\newtheorem{example}{Example}
\theoremstyle{remark}
\newtheorem{remark}{Remark}
\DeclareMathOperator{\trace}{Tr}
\DeclareMathOperator{\dist}{dist}
\DeclareMathOperator{\diag}{diag}
\DeclareMathOperator{\dom}{dom}
\DeclareMathOperator{\epigraph}{epi}
\DeclareMathOperator{\interior}{int}
\DeclareMathOperator{\sign}{sign}
\author{M.V. Dolgopolik\footnote{Institute for Problems in Mechanical Engineering of the Russian Academy of Sciences, 
Saint Petersburg, Russia}}
\title{Convergence analysis of primal-dual augmented Lagrangian methods and duality theory}
\begin{document}

\maketitle

\begin{abstract}
We develop a unified theory of augmented Lagrangians for nonconvex optimization problems that encompasses both duality
theory and convergence analysis of primal-dual augmented Lagrangian methods in the infinite dimensional setting. Our
goal is to present many well-known concepts and results related to augmented Lagrangians in a unified manner and bridge
a gap between existing convergence analysis of primal-dual augmented Lagrangian methods and abstract duality theory.
Within our theory we specifically emphasize the role of various fundamental duality concepts (such as duality gap,
optimal dual solutions, global saddle points, etc.) in convergence analysis of augmented Lagrangians methods and 
underline interconnections between all these concepts and convergence of primal and dual sequences generated by such
methods. In particular, we prove that the zero duality gap property is a necessary condition for the boundedness of the
primal sequence, while the existence of an optimal dual solution is a necessary condition for the boundedness of the
sequences of multipliers and penalty parameters, irrespective of the way in which the multipliers and the penalty
parameter are updated. Our theoretical results are applicable to many different augmented Lagrangians for various types
of cone constrained optimization problems, including Rockafellar-Wets' augmented Lagrangian, (penalized)
exponential/hyperbolic-type augmented Lagrangians, modified barrier functions, etc.
\end{abstract}

\section{Introduction}

Augmented Lagrangians play a fundamental role in optimization and many other closely related fields
\cite{Bertsekas,FortinGlowinski,ItoKunisch,BirginMartinez,BurmanHansboLarson}, and there is a vast literature on various
aspects of augmented Lagrangian theory and corresponding methods. A wide range of topics that is studied within the
more theoretically oriented part of the literature includes such important problems as analysis of the zero duality gap
property \cite{HuangYang2003,HuangYang2005,BurachikRubinov,LiuYang2008,ZhouYang2009,WangYangYang,YalchinKasimbeyli}, 
exact penalty representation
\cite{HuangYang2003,HuangYang2005,ZhouYang2009,BurachikIusemMelo,WangYangYang,ZhouYang2012,ZhouXiuWang,
BurachikIusemMelo2015, FeizollahiAhmedSun},
existence of global saddle points
\cite{SunLiMcKinnon,WuLuo2012b,WuLuoYang2014,ZhouChen2015,LuoWuLiu2015,WangLiuQu,Dolgopolik2018},
existence of augmented Lagrange multipliers
\cite{ShapiroSun,RuckmannShapiro,ZhouZhouYang2014,BurachikYangZhou,Dolgopolik2017}, etc. 

In turn, more practically oriented papers are usually devoted to convergence analysis of primal-dual augmented
Lagrangian methods and do not utilise any theoretical concepts or results from duality theory, except for the zero
duality gap property
\cite{BurachikGasimov,BurachickIusemMelo_SharpLagr,BagirovOzturkKasimbeyli,BurachikLiu2023,BurachikKayaLiu2023}. Very
few attempts have been made to connect convergence analysis of such methods with fundamental results from duality
theory. Paper \cite{Burachick2011}, in which the equivalence between primal convergence and differentiability of the
augmented dual function for the sharp Lagrangian for equality constrained problems was established, is perhaps the most
notable among them. 

In addition, papers dealing with convergence analysis of primal-dual augmented Lagrangian methods typically consider
only one particular augmented Lagrangian for one particular type of constraints. As a consequence of that, very similar
results have to be repeated multiple types in different settings (cf. convergence analysis of augmented Lagrangian
methods based on the Hestenes-Powell-Rockafellar augmented Lagrangian for mathematical programming problems
\cite{BirginMartinez}, nonconvex problems with second order cone constraints \cite{LiuZhang2007,LiuZhang2008}, nonconvex
semidefinite programming problems \cite{SunSunZhang2008,LuoWuChen2012,WuLuoDingChen2013}, as well as similar convergence
analysis of augmented Lagrangian methods based on the exponential-type augmented Lagrangian/modified barrier function
for nonconvex problems with second order cone constraints \cite{ZhangGuXiao2011} and nonconvex semidefinite programming
problems \cite{LiZhang2009}). Some attempts have been made to unify convergence analysis of a number of primal-dual
augmented Lagrangian methods \cite{LuoSunLi,LuoSunWu,WangLi2009}, but only for finite dimensional inequality
constrained optimization problems.

Thus, there are two fundamental challenges within the theory of augmented Lagrangians and corresponding optimization
methods. The first one is connected with a noticeable gap that exists between more theoretically oriented results
dealing with duality theory and more practically oriented results on convergence analysis of primal-dual methods.
Often, the duality theory plays little to no role in convergence analysis of primal-dual methods, and theoretical
results from this theory are almost never utilised to help better understand convergence of augmented Lagrangian
methods. The second challenge is connected with unification of numerous similar results on duality theory and augmented
Lagrangian methods that are scattered in the literature. 

The main goal of this article is to present a general theory of augmented Lagrangians for nonconvex cone constrained
optimization problems in the infinite dimensional setting that encompasses both fundamental theoretical concepts from
duality theory and convergence analysis of primal-dual augmented Lagrangian methods, and also highlights
interconnections between them, thus at least partially solving the two aforementioned challenges. 

Our other aim is to correct for the bias that exists in the literature on augmented Lagrangians, in which a
disproportionate number of papers is devoted to analysis of Rockafellar-Wets's augmented Lagrangian
\cite[Section~11.K]{RockafellarWets} and corresponding numerical methods, while very little attention is paid to
other classes of augmented Lagrangians. In particular, to the best of the author's knowledge, many concepts (such as
exact penalty map \cite{BurachikIusemMelo2015}, exact penalty representation, and augmented Lagrange multipliers) have
been introduced and studied exclusively in the context of Rockafellar-Wets' augmented Lagrangian. Our aim is to show
that these concepts can be defined and be useful in a much more broad setting and to demonstrate that many results from
duality theory and convergence analysis of primal-dual methods can be proved for Rockafellar-Wets' augmented Lagrangian
and many other augmented Lagrangians in a unified way. 

To achieve our goals, we adopt an axiomatic augmented Lagrangian setting developed by the author in
\cite{Dolgopolik2018} and inspired by \cite{LiuYang2008}. Within this setting, one defines an abstract augmented
Lagrangian without specifying its structure, while all theoretical results are proved using a set of axioms
(\textit{basic assumptions}). To demonstrate the broad applicability of this approach, we present many particular
examples of well-known augmented Lagrangians and prove that they satisfy these axioms. 

We also develop a general duality theory for augmented Lagrangians that complements the results of the author's earlier
papers \cite{Dolgopolik2018,Dolgopolik2021} and, in particular, provide simple necessary and sufficient conditions for
the zero duality gap property to hold true, from which many existing results on this property can be easily derived.
Finally, we present a general convergence analysis for a model augmented Lagrangian method with arbitrary rules for
updating multipliers and penalty parameter. Under some natural assumptions, that are satisfied in many particular cases,
we study primal and dual convergence of this method, making specific emphasis on the role of various fundamental
concepts from duality theory in convergence analysis of augmented Lagrangian methods. In particular, we points out
direct connections between primal convergence and the zero duality gap property, as well as direct connections between
dual convergence, boundedness of the penalty parameter, and the existence of optimal dual solutions/global saddle
points.

The paper is organized as follows. An abstract axiomatic augmented Lagrangian setting for cone constrained optimization
problems in normed spaces, including a list of \textit{basic assumptions} (axioms) on an augmented Lagrangian, is
presented in Section~\ref{sect:AxiomaticSetting}. Many particular examples of augmented Lagrangians for various types of
cone constrained optimization problems and the basic assumptions that these augmented Lagrangians satisfy are discussed
in details in Section~\ref{sect:Examples}. Section~\ref{sect:DualityTheory} is devoted to duality theory. In this
section we analyse the zero duality gap property for the augmented Lagrangian, and also study interconnections between
global saddle points, globally optimal solutions of the augmented dual problem, augmented Lagrange multipliers, and the
penalty map. Finally, a general convergence theory for a model augmented Lagrangian method, that encompasses many
existing primal-dual augmented Lagrangian methods as particular cases, is presented in
Section~\ref{sect:ConvergenceAnalysis}.

\section{Axiomatic augmented Lagrangian setting}
\label{sect:AxiomaticSetting}

We start by presenting an axiomatic approach to augmented Lagrangians that serves as a foundation for our duality and
convergence theories. Let $X$ and $Y$ be real normed spaces, $Q \subseteq X$ be a nonempty closed set (\textit{not}
necessarily convex), and $K \subset Y$ be a closed convex cone. Suppose also that some functions 
$f \colon X \to \mathbb{R} \cup \{ + \infty \}$ and $G \colon X \to Y$ are given. In this article we study augmented
Lagrangians for the following cone constrained optimization problem:
\[
  \min\: f(x) \quad \text{subject to} \quad G(x) \in K, \quad x \in Q.
  \qquad \eqno{(\mathcal{P})}
\]
We assume that the feasible region of this problem is nonempty and its optimal value, denoted by $f_*$, is finite.

The topological dual space of $Y$ is denoted by $Y^*$, and let $\langle \cdot, \cdot \rangle$ be either the inner
product in $\mathbb{R}^s$, $s \in \mathbb{N}$, or the duality pairing between $Y$ and its dual, depending on 
the context. Recall that $K^* = \{ y^* \in Y^* \mid \langle y^*, y \rangle \le 0 \enspace \forall y \in K \}$ is
the polar cone of the cone $K$. 

Denote by $\preceq$ the binary relation over $Y \times Y$ defined as $y_1 \preceq y_2$ if and only if 
$y_2 - y_1 \in -K$. We say that this binary relation is induced by the cone $-K$. As one can readily check, this 
binary relation is a partial order on $Y$ if and only if $K \cap (-K) = \{ 0 \}$. In this case $\preceq$ is called 
the partial order induced by the cone $-K$. Note that the cone constraint $G(x) \in K$ can be rewritten as 
$G(x) \preceq 0$. 

We define an \textit{augmented Lagrangian} for the problem $(\mathcal{P})$ as follows. Choose any function
$\Phi \colon Y \times Y^* \times (0, + \infty) \to \mathbb{R} \cup \{ \pm \infty \}$, $\Phi = \Phi(y, \lambda, c)$. 
An augmented Lagrangian for the problem $(\mathcal{P})$ is defined as
\[
  \mathscr{L}(x, \lambda, c) = f(x) + \Phi(G(x), \lambda, c)
\] 
if $\Phi(G(x), \lambda, c) > - \infty$, and $\mathscr{L}(x, \lambda, c) = - \infty$, otherwise. 
Here $\lambda \in Y^*$ is a multiplier and $c > 0$ is a penalty parameter. Note that only the constraint 
$G(x) \in K$ is incorporated into the augmented Lagrangian.

Unlike most existing works on augmented Lagrangians and corresponding optimization methods, we do not impose any
assumptions on the structure of the function $\Phi$. It can be defined in an arbitrary way. Instead, being inspired by
\cite{LiuYang2008} and following the ideas of our earlier paper \cite{Dolgopolik2018}, we propose to use a set of axioms
(assumptions) describing behaviour of the function $\Phi(y, \lambda, c)$ for various types of arguments (e.g. as $c$
increases unboundedly or when $y \in K$). This approach allows one to construct an axiomatic theory of augmented
Lagrangians and helps one to better understand what kind of assumptions the function $\Phi$ must satisfy to guarantee
that the augmented Lagrangian $\mathscr{L}(x, \lambda, c)$ and optimization methods based on this augmented Lagrangian
have certain desirable properties. As we will show below, most well-known augmented Lagrangians satisfy our proposed set
of axioms, which means that our axiomatic theory is rich enough and can be applied in many particular cases.

In order to unite several particular cases into one general theory, we formulate axioms/assumptions on the function
$\Phi$ with respect to a prespecified closed convex cone $\Lambda \subseteq Y^*$ of admissible multipliers. Usually,
$\Lambda = Y^*$ or $\Lambda = K^*$. 

We grouped the assumptions on $\Phi$ according to their meaning. If the function $\Phi(y, \lambda, c)$ is viewed as 
a black box with input $(y, \lambda, c)$ and output $\Phi(y, \lambda, c)$, then assumptions $(A1)$--$(A6)$ describe 
what kind of output is produced by this black box with respect to certain specific kinds of input. Assumptions 
$(A7)$--$(A11)$ describe general properties of the function $\Phi$, such as monotonicity, differentiability, and 
convexity. Assumptions $(A12)$--$(A15)$ impose restrictions on the way the function $\Phi(y, \lambda, c)$ behaves 
as $c$ increases unboundedly or along certain sequences $\{ (y_n, \lambda_n, c_n) \}$. Finally, the subscript ``s''
indicates a stronger, i.e. more restrictive, version of an assumption.

Denote $\dist(y, K) = \inf_{z \in K} \| y - z \|$ for any $y \in Y$, and let $B(x, r)$ be the closed ball with centre 
$x$ and radius $r > 0$. Below is the list of \textit{basic assumptions} on the function $\Phi$ that are utilised
throughout the article:
\begin{itemize}
\item[(A1)]{$\forall y \in K \: \forall \lambda \in \Lambda \: \forall c > 0$ one has $\Phi(y, \lambda, c) \le 0$;}

\item[(A2)]{$\forall y \in K \: \forall c > 0 \: \exists \lambda \in \Lambda$ such that $\Phi(y, \lambda, c) \ge 0$;}

\item[(A3)]{$\forall y \notin K \: \forall c > 0 \: \exists \lambda \in \Lambda$ such that 
$\Phi(y, t \lambda, c) \to + \infty$ as $t \to + \infty$;}

\item[(A4)]{$\forall y \in K \: \forall \lambda \in K^* \: \forall c > 0$ such that 
$\langle \lambda, y \rangle = 0$ one has $\Phi(y, \lambda, c) = 0$;
}

\item[(A5)]{$\forall y \in K \: \forall \lambda \in K^* \: \forall c > 0$ such that 
$\langle \lambda, y \rangle \ne 0$ one has $\Phi(y, \lambda, c) < 0$;
}

\item[(A6)]{$\forall y \in K \: \forall \lambda \in \Lambda \setminus K^* \: \forall c > 0$ 
one has $\Phi(y, \lambda, c) < 0$;
}

\item[(A7)]{$\forall y \in Y \: \forall \lambda \in \Lambda$ the function $c \mapsto \Phi(y, \lambda, c)$ is
non-decreasing;}

\item[(A8)]{$\forall \lambda \in \Lambda \: \forall c > 0$ the function $y \mapsto \Phi(y, \lambda, c)$ is convex and
non-decreasing with respect to the binary relation $\preceq$;}

\item[(A9)]{$\forall y \in Y \: \forall c > 0$ the function $\lambda \mapsto \Phi(y, \lambda, c)$ is concave;}

\item[$(A9)_s$]{$\forall y \in Y$ the function $(\lambda, c) \mapsto \Phi(y, \lambda, c)$ is concave;}

\item[(A10)]{$\forall y \in Y$ the function $(\lambda, c) \mapsto \Phi(y, \lambda, c)$ is upper semicontinuous;}

\item[(A11)]{$\forall y \in K \: \forall \lambda \in K^* \: \forall c > 0$ such that $\langle \lambda, y \rangle = 0$
the function $\Phi(\cdot, \lambda, c)$ is Fr\'{e}chet differentiable at $y$ and its Fr\'{e}chet derivative satisfies
the equality $D_y \Phi(y, \lambda, c) = \Phi_0(\lambda)$ for some surjective mapping $\Phi_0 \colon K^* \to K^*$ that 
does not depend on $y$ and $c$, and such that $\langle \Phi_0(\lambda), y \rangle = 0$ if and only if 
$\langle \lambda, y \rangle = 0$;
}

\item[(A12)]{$\forall \lambda \in \Lambda \: \forall c_0 > 0 \: \forall r > 0$ one has
\begin{multline*}
  \lim_{c \to + \infty} \inf\Big\{ \Phi(y, \lambda, c) - \Phi(y, \lambda, c_0) \Bigm|
  \\
  y \in Y \colon \dist(y, K) \ge r, \: |\Phi(y, \lambda, c_0)| < + \infty \Big\} = + \infty;
\end{multline*}
}

\vspace{-5mm}

\item[$(A12)_s$]{$\forall c_0 > 0 \: \forall r > 0$ and for any bounded subset $\Lambda_0 \subseteq \Lambda$ one has
\begin{multline*}
  \lim_{c \to + \infty} \inf_{\lambda \in \Lambda_0} \inf\Big\{ \Phi(y, \lambda, c) - \Phi(y, \lambda, c_0) \Bigm|
  \\
  y \in Y \colon \dist(y, K) \ge r, \: |\Phi(y, \lambda, c_0)| < + \infty \Big\} = + \infty;
\end{multline*}
}

\vspace{-5mm}

\item[(A13)]{$\forall \lambda \in \Lambda$ and for any sequences $\{ c_n \} \subset (0, + \infty)$ and 
$\{ y_n \} \subset Y$ such that $c_n \to + \infty$ and $\dist(y_n, K) \to 0$ as $n \to \infty$ one has 
$\liminf\limits_{n \to \infty} \Phi(y_n, \lambda, c_n) \ge 0$;}

\item[$(A13)_s$]{for any sequences $\{ c_n \} \subset (0, + \infty)$ and $\{ y_n \} \subset Y$ such that
$c_n \to + \infty$ and $\dist(y_n, K) \to 0$ as $n \to \infty$ and for any bounded subset $\Lambda_0 \subseteq \Lambda$
one has $\liminf\limits_{n \to \infty} \inf\limits_{\lambda \in \Lambda_0} \Phi(y_n, \lambda, c_n) \ge 0$;
}

\item[(A14)]{$\forall \lambda \in \Lambda \: \forall \{ c_n \} \subset (0, + \infty)$ such that $c_n \to + \infty$ as 
$n \to \infty$ there exists $\{ t_n \} \subset (0, + \infty)$ such that $t_n \to 0$ as $n \to \infty$ and for any 
$\{ y_n \} \subset Y$ with $\dist(y_n, K) \le t_n$ one has $\lim\limits_{n \to \infty} \Phi(y_n, \lambda, c_n) = 0$;
}

\item[$(A14)_s$]{$\forall \{ c_n \} \subset (0, + \infty)$ such that $c_n \to + \infty$ as $n \to \infty$ and for any
bounded sequence $\{ \lambda_n \} \subseteq \Lambda$ there exists $\{ t_n \} \subset (0, + \infty)$ such that 
$t_n \to 0$ as $n \to \infty$ and for any $\{ y_n \} \subset Y$ with $\dist(y_n, K) \le t_n$ the following equality
holds true $\lim\limits_{n \to \infty} \Phi(y_n, \lambda_n, c_n) = 0$; 
}

\item[(A15)]{for any bounded sequences $\{ \lambda_n \} \subset \Lambda$ and $\{ c_n \} \subset (0, + \infty)$ and for
any sequence $\{ y_n \} \subset Y$ such that $\dist(y_n, K) \to 0$ as $n \to \infty$ one has 
$\limsup\limits_{n \to \infty} \Phi(y_n, \lambda_n, c_n) \le 0$;
}
\end{itemize}

\begin{remark} \label{rmrk:RestrictedAssumptions}
In the case of assumptions $(A13)$, $(A13)_s$, $(A14)$, $(A14)_s$, and $(A15)$, we say that the function $\Phi$ 
satisfies the \textit{restricted} version of the corresponding assumption, if one replaces ``any sequence 
$\{ y_n \} \subset Y$'' in the formulation of corresponding assumption with ``any \textit{bounded} sequence 
$\{ y_n \} \subset Y$''. Note that the validity of a (non-restricted) assumption implies that its restricted version
also hold true.
\end{remark}

\begin{remark}
It should be noted that assumption $(A13)_s$ can be reformulated as follows: for any sequences 
$\{ c_n \} \subset (0, + \infty)$ and $\{ y_n \} \subset Y$ such that $c_n \to + \infty$ and $\dist(y_n, K) \to 0$ as 
$n \to \infty$ and for any bounded sequence $\{ \lambda_n \} \subseteq \Lambda$ one has 
$\liminf\limits_{n \to \infty} \Phi(y_n, \lambda_n, c_n) \ge 0$.
\end{remark}

Below we will often use the following corollary to assumptions $(A13)$ and $(A13)_s$ that can be readily verified
directly.

\begin{lemma} \label{lem:Assumpt(A16)}
If $\Phi$ satisfies assumption $(A13)$, then for any $\lambda \in \Lambda$ one has
\[
  \liminf_{c \to + \infty} \inf_{y \in K} \Phi(y, \lambda, c) \ge 0.
\]
If $\Phi$ satisfies assumption $(A13)_s$, then for any bounded subset $\Lambda_0 \subseteq \Lambda$ one has 
\[
  \liminf_{c \to + \infty} \inf_{\lambda \in \Lambda_0} \inf\limits_{y \in K} \Phi(y, \lambda, c) \ge 0.
\]
\end{lemma}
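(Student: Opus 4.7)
Both statements are essentially an unpacking of $(A13)$ (resp.\ $(A13)_s$) using that points of $K$ have distance $0$ from $K$, so the natural approach is proof by contradiction applied to a suitable near-minimizing sequence.

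For the first claim, suppose, aiming at a contradiction, that
\[
  \liminf_{c \to + \infty} \inf_{y \in K} \Phi(y, \lambda, c) < 0
\]
for some $\lambda \in \Lambda$. Then there exist $\alpha < 0$ and a sequence $c_n \to + \infty$ with $\inf_{y \in K} \Phi(y, \lambda, c_n) < \alpha$ for all $n$. For each $n$, pick $y_n \in K$ so that $\Phi(y_n, \lambda, c_n) < \alpha$; this is possible by the definition of infimum (and in the case $\inf = -\infty$, we may pick $y_n$ with $\Phi(y_n, \lambda, c_n) < \alpha$ as well). Since $y_n \in K$, we have $\dist(y_n, K) = 0 \to 0$. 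Applying $(A13)$ to the sequences $\{c_n\}$ and $\{y_n\}$ yields $\liminf_n \Phi(y_n, \lambda, c_n) \ge 0$, which contradicts $\Phi(y_n, \lambda, c_n) < \alpha < 0$.

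For the second claim, suppose there is a bounded $\Lambda_0 \subseteq \Lambda$ with
\[
  \liminf_{c \to + \infty} \inf_{\lambda \in \Lambda_0} \inf_{y \in K} \Phi(y, \lambda, c) < 0.
\]
Pick $\alpha < 0$ and $c_n \to + \infty$ so that $\inf_{\lambda \in \Lambda_0} \inf_{y \in K} \Phi(y, \lambda, c_n) < \alpha$, and for each $n$ choose $\lambda_n \in \Lambda_0$ and $y_n \in K$ with $\Phi(y_n, \lambda_n, c_n) < \alpha$. The sequence $\{\lambda_n\}$ is bounded, $\{y_n\} \subset K$ gives $\dist(y_n, K) = 0$, and $c_n \to + \infty$. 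The reformulation of $(A13)_s$ given in the preceding remark then yields $\liminf_n \Phi(y_n, \lambda_n, c_n) \ge 0$, again contradicting the construction.

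There is no real obstacle here; the only subtle point is the need to allow for $\inf_{y \in K} \Phi(y, \lambda, c_n) = -\infty$ when extracting the sequence $\{y_n\}$, which is handled uniformly by demanding $\Phi(y_n, \lambda, c_n) < \alpha$ rather than proximity to the infimum. Everything else is a direct application of the definitions.
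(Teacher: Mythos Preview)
Your proof is correct and is precisely the direct verification the paper alludes to but does not spell out; the paper merely states that the lemma ``can be readily verified directly'' without giving an argument. Your contradiction approach via near-minimizing sequences is the natural way to carry this out.
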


In the following section we will give many particular examples of augmented Lagrangians for different types of cone
constrained optimization problems (e.g. equality constrained problems, inequality constrained problems, problems with
semidefinite constraints, etc.). If an optimization problem has several different types of constraints simultaneously,
it is convenient to represent them as cone constraints of the form $G_i(x) \in K_i$ for some mappings 
$G_i \colon X \to Y_i$ and closed convex cones $K_i$ in real Banach spaces $Y_i$, $i \in \{ 1, \ldots, m \}$, with 
$m \in \mathbb{N}$. Then one can define $Y = Y_1 \times \ldots \times Y_m$ and
\[
  G(\cdot) = (G_1(\cdot), \ldots, G_m(\cdot)), \quad K = K_1 \times \ldots \times K_m
\]
to formally rewrite such optimization problems as the problem $(\mathcal{P})$. The space $Y$ is equipped with the norm
$\| y \| = \| y_1 \| + \ldots + \| y_m \|$ for all $y = (y_1, \ldots, y_m) \in Y$.

In order to define a function $\Phi(y, \lambda, c)$ in this case, one can define corresponding functions 
$\Phi_i(y_i, \lambda_i, c)$ for each constraint $G_i(x) \in K_i$ individually (here $y_i \in Y_i$ and 
$\lambda_i \in \Lambda_i \subseteq Y_i^*$) and then put
\begin{equation} \label{eq:SeparableAugmLagr}
  \Phi(y, \lambda, c) = \sum_{i = 1}^m \Phi_i(y_i, \lambda_i, c), \quad
  y = (y_1, \ldots, y_m), \quad \lambda = (\lambda_1, \ldots, \lambda_m),
\end{equation}
if $\Phi_i(y_i, \lambda_i, c) > - \infty$ for all $i$, and $\Phi(y, \lambda, c) = - \infty$, otherwise. Most (if not
all) existing augmented Lagrangians for problems with several different types constraints are defined in this way. Let
us show that, roughly speaking, if $\Lambda = \Lambda_1 \times \ldots \times \Lambda_m$ and all functions $\Phi_i$, 
$i \in \{ 1, \ldots, m \}$, satisfy some basic assumption simultaneously, then the function $\Phi$ also satisfies this
basic assumption. 

\begin{theorem} \label{thrm:BasicAssumptionSeparateCase}
Let $\Lambda = \Lambda_1 \times \ldots \times \Lambda_m$. Then the following statements hold true:
\begin{enumerate}
\item{if all functions $\Phi_i$, $i \in I := \{ 1, \ldots, m \}$, satisfy one of the basic assumptions simultaneously, 
except for assumptions $(A5)$, $(A6)$, $(A12)$, and $(A12)_s$, then the function $\Phi$ defined in 
\eqref{eq:SeparableAugmLagr} satisfies the same basic assumption;
}

\item{if all functions $\Phi_i$, $i \in I$, satisfy assumptions $(A1)$ and $(A5)$ (or $(A1)$ and $(A6)$)
simultaneously, then the function $\Phi$ defined in \eqref{eq:SeparableAugmLagr} also satisfies assumption $(A5)$ 
(or $(A6)$);
}

\item{if all functions $\Phi_i$, $i \in I$, satisfy assumptions $(A7)$ and $(A12)$ (or $(A7)$ and $(A12)_s$)
simultaneously, then the function $\Phi$ defined in \eqref{eq:SeparableAugmLagr} also satisfies assumption $(A12)$ 
(or $(A12)_s$).
}
\end{enumerate}
\end{theorem}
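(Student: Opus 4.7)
The plan is to exploit the additive structure of $\Phi$ together with the fact that on the product space $Y = Y_1 \times \ldots \times Y_m$ one has $K^* = K_1^* \times \ldots \times K_m^*$, $\langle \lambda, y\rangle = \sum_{i=1}^m \langle \lambda_i, y_i\rangle$, and $\dist(y, K) = \sum_{i=1}^m \dist(y_i, K_i)$ for the chosen sum norm. Most basic assumptions have a ``pointwise'' form in which every quantifier ranges over arguments whose constraints split coordinate-wise; for those, the statement for $\Phi$ follows simply by summing the $m$ coordinate-wise conclusions. The genuine content sits in the cases where the hypothesis is an aggregate condition on $\lambda$ or $y$, and this is precisely where the extra axioms $(A1)$ and $(A7)$ are needed.

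For Statement 1, I would dispose of $(A1)$, $(A2)$, $(A3)$, $(A4)$ by coordinate-wise construction (for $(A2)$, $(A3)$ one picks a witness multiplier $\lambda_i$ per factor), and of $(A7)$, $(A8)$, $(A9)$, $(A9)_s$, $(A10)$ by noting that monotonicity, convexity/concavity, and upper semicontinuity are all preserved by finite sums. For $(A11)$, I would set $\Phi_0(\lambda) = (\Phi_{0,1}(\lambda_1), \ldots, \Phi_{0,m}(\lambda_m))$, verify surjectivity onto $K^* = K_1^* \times \ldots \times K_m^*$ factor-wise, and observe that on $K \times K^*$ both conditions $\langle \Phi_0(\lambda), y\rangle = 0$ and $\langle \lambda, y\rangle = 0$ reduce to every summand vanishing (since each summand is nonpositive under these sign restrictions). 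For $(A13)$--$(A15)$ and their strong versions, the hypothesis $\dist(y_n, K) \to 0$ forces $\dist(y_{n,i}, K_i) \to 0$ for every $i$, so the componentwise assumption applies and the $m$ resulting $\liminf$/$\limsup$ inequalities add; for $(A14)$ I would pick $t_n^{(i)}$ per factor and take $t_n = \min_i t_n^{(i)}$.

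For Statement 2, if $y \in K$ and either $\langle \lambda, y\rangle \ne 0$ or $\lambda \in \Lambda \setminus K^*$, then at least one index $i_0$ carries the defect (that is, $\langle \lambda_{i_0}, y_{i_0}\rangle \ne 0$, respectively $\lambda_{i_0} \notin K_{i_0}^*$). The individual $(A5)$ or $(A6)$ then gives $\Phi_{i_0}(y_{i_0}, \lambda_{i_0}, c) < 0$, while $(A1)$ applied to every other factor gives $\Phi_j(y_j, \lambda_j, c) \le 0$; summing yields the strict inequality. The main obstacle is Statement 3: from $\dist(y, K) \ge r$ one only obtains an index $i_0 = i_0(y)$ with $\dist(y_{i_0}, K_{i_0}) \ge r/m$, and this index depends on $y$, so a direct application of $(A12)$ is impossible. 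I would resolve the coupling by splitting the infimum in $(A12)$ as the finite minimum over $i_0 \in \{1, \ldots, m\}$ of $m$ partial infima, each taken over the set $\{ y \in Y : \dist(y_{i_0}, K_{i_0}) \ge r/m,\; |\Phi(y, \lambda, c_0)| < +\infty \}$. On each such piece, the coordinate $i_0$ supplies divergence $\Phi_{i_0}(y_{i_0}, \lambda_{i_0}, c) - \Phi_{i_0}(y_{i_0}, \lambda_{i_0}, c_0) \to +\infty$ by $(A12)$ applied to $\Phi_{i_0}$, while every other coordinate $j$ satisfies $\Phi_j(y_j, \lambda_j, c) - \Phi_j(y_j, \lambda_j, c_0) \ge 0$ by the monotonicity $(A7)$. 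Since $m$ is finite, the minimum of $m$ sequences diverging to $+\infty$ still diverges to $+\infty$; the same argument gives $(A12)_s$ after projecting the bounded set $\Lambda_0$ onto each factor $\Lambda_i$, which remains bounded.
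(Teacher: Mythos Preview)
Your proposal is correct and follows essentially the same approach as the paper's proof: a case-by-case verification exploiting the additive structure, with $(A1)$ supplying the nonpositive summands needed for the strict inequality in Statement~2 and $(A7)$ supplying the nonnegative cross terms in Statement~3. The only cosmetic difference is that for $(A12)$ the paper fixes $y$, picks the index $i$ with $\dist(y_i,K_i)\ge r/m$, and bounds below by the $i$-th difference, whereas you phrase this as a finite minimum of partial infima; these are the same argument.
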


\begin{proof}
\textbf{Assumption (A1).} If $y \in K$ and $\lambda \in \Lambda$, then $y_i \in K_i$ and $\lambda_i \in \Lambda_i$
for all $i \in I$. Therefore, $\Phi_i(y_i, \lambda_i, c) \le 0$ by assumption $(A1)$, which implies that 
$\Phi(y_, \lambda, c) \le 0$.

\textbf{Assumption (A2).} Choose any $y \in K$ and $c > 0$. By assumption $(A2)$ for any $i \in I$ there exists
$\lambda_i \in \Lambda_i$ such that $\Phi_i(y_i, \lambda_i, c) \ge 0$. Put $\lambda = (\lambda_1, \ldots, \lambda_m)$.
Then $\Phi(y, \lambda, c) \ge 0$.

\textbf{Assumption (A3).} Choose any $y \in K$ and $c > 0$. By assumption $(A3)$ for any $i \in I$ there exists
$\lambda_i \in \Lambda_i$ such that $\Phi_i(y_i, t \lambda_i, c) \to + \infty$ as $t \to +\infty$. 
Put $\lambda = (\lambda_1, \ldots, \lambda_m)$. Then $\Phi(y, t \lambda, c) \to + \infty$ as $t \to \infty$.

\textbf{Assumption (A4).} Let $y \in K$ and $\lambda \in K^*$ be such that $\langle \lambda, y \rangle = 0$.
It is easily seen that the condition $\lambda \in K^*$ implies that $\lambda_i \in K^*_i$ for any $i \in I$ and, 
therefore, $\langle \lambda_i, y_i \rangle \le 0$ for all $i \in I$. Hence $\langle \lambda_i, y_i \rangle = 0$ for
all $i \in I$ and by assumption $(A4)$ one has $\Phi_i(y_i, \lambda_i, c) = 0$, which yields $\Phi(y, \lambda, c) = 0$.

\textbf{Assumption (A5).} If $y \in K$ and $\lambda \in K^*$ are such that $\langle \lambda, y \rangle \ne 0$, then
there exists $k \in I$ such that $\langle \lambda_k, y_k \rangle \ne 0$. Therefore, $\Phi_k(y_k, \lambda_k, c) < 0$. In
turn, for any $i \ne k$ one has $\Phi_i(y_i, \lambda_i, c) \le 0$ by assumption $(A1)$. Hence $\Phi(y, \lambda, c) < 0$.

\textbf{Assumption (A6).} If $y \in K$ and $\lambda \in \Lambda \setminus K^*$, then 
$\lambda_k \in \Lambda_k \setminus K_k^*$ for some $k \in I$. Therefore, $\Phi_k(y_k, \lambda_k, c) < 0$, while
$\Phi(y_i, \lambda_i, c) \le 0$ for any $i \ne k$ by assumption $(A1)$. Consequently, $\Phi(y, \lambda, c) < 0$.

\textbf{Assumption (A7).} The function $\Phi(y, \lambda, c)$ is non-decreasing in $c$ as the sum of non-decreasing in
$c$ functions.

\textbf{Assumption (A8).} Note that if the function $\Phi_i(\cdot, \lambda_i, c)$ is non-decreasing with respect to 
the binary relation induced by the cone $- K_i$, then the function $y \mapsto \Phi_i(y_i, \lambda_i, c)$ is
non-decreasing with respect to the binary relation induced by the cone $- K$. Therefore the function 
$\Phi(y, \lambda, c)$ is convex and non-decreasing with respect to the binary relation induced by the cone $- K$ as 
the sum of convex and non-decreasing functions.

\textbf{Assumptions (A9) and (A9)${}_s$.} The function $\Phi(y, \lambda, c)$ is concave in $\lambda$ 
(or $(\lambda, c)$) as the sum of concave functions.

\textbf{Assumption (A10).} The function $\Phi(y, \lambda, c)$ is upper semicontinuous as the sum of a finite number of
upper semicontinuous functions.

\textbf{Assumption (A11).} If $y \in K$ and $\lambda \in K^*$ are such that $\langle \lambda, y \rangle = 0$, then, as
was noted above, $y_i \in K_i$, $\lambda_i \in K_i^*$, and $\langle \lambda_i, y_i \rangle = 0$ for all $i \in I$. Then
each of the functions $\Phi_i(\cdot, \lambda_i, c)$ is Fr\'{e}chet differentiable at $y_i$ and its derivative has the
form $D_{y_i} \Phi_i(y_i, \lambda_i, c) = \Phi_{i0}(\lambda_i)$ for some function $\Phi_{i0} \colon K_i^* \to K_i^*$
such that $\langle \Phi_{i0}(\lambda_i), y_i \rangle = 0$ if and only if $\langle \lambda_i, y_i \rangle = 0$.
Therefore, the function $\Phi(\cdot, \lambda, c)$ is Fr\'{e}chet differentiable at the point $y$ and its Fr\'{e}chet
derivative is equal to $\Phi_0(\lambda) = (\Phi_{10}(\lambda_1), \ldots, \Phi_{m0}(\lambda_m))$. Clearly, one has
\begin{align*}
  \langle \Phi_0(\lambda), y \rangle = 0 \enspace \Leftrightarrow \enspace
  \langle \Phi_{i0}(\lambda_i), y_i \rangle = 0 \enspace \forall i \in I \enspace &\Leftrightarrow \enspace
  \langle \lambda_i, y_i \rangle = 0 \enspace \forall i \in I 
  \\
  &\Leftrightarrow \enspace \langle \lambda, y \rangle = 0, 
\end{align*}
which implies the required result.

\textbf{Assumption (A12).} Fix any $\lambda \in \Lambda$, $c_0 > 0$, and $r > 0$. Choose some $\alpha > 0$. Let 
$y \in Y$ be such that $|\Phi(y, \lambda, c_0)| < + \infty$ and $\dist(y, K) \ge r$. Then there exists $i \in I$ such 
that $\dist(y_i, K_i) \ge r/m$. By assumption $(A12)$ for the functions $\Phi_j$, for any $j \in I$ there exists 
$c_j(r/m, \alpha) \ge c_0$ such that for all $c \ge c_j(r/m, \alpha)$ one has
\begin{multline*}
  \inf\Big\{ \Phi_j(z_j, \lambda_j, c) - \Phi_j(z_j, \lambda_j, c_0) \Bigm|
  z_j \in Y_j \colon \dist(z_j, K_j) \ge r/m, 
  \\
  |\Phi_j(z_j, \lambda_j, c_0)| < + \infty \Big\} \ge \alpha.
\end{multline*}
Let $c(r, \alpha) := \max\{ c_j(r/m, \alpha) \mid j \in I \}$. Then with the use of assumption $(A7)$ one gets that 
for any $c \ge c(r, \alpha)$ the following inequalities hold true:
\begin{align*}
  \Phi(y, \lambda, c) - \Phi(y, \lambda, c_0) 
  &= \sum_{i = 1}^m \big( \Phi_i(y_i, \lambda_i, c) - \Phi_i(y_i, \lambda_i, c_0) \big)
  \\
  &\ge \Phi_i(y_i, \lambda_i, c) - \Phi_i(y_i, \lambda_i, c_0) \ge \alpha.
\end{align*}
Since $y \in Y$ such that $|\Phi(y, \lambda, c_0)| < + \infty$ and $\dist(y, K) \ge r$ were chosen arbitrarily, one can
conclude that for any $\alpha > 0$ and $r > 0$ there exists a number $c(r, \alpha) \ge c_0$ such that
\[
  \inf\Big\{ \Phi(y, \lambda, c) - \Phi(y, \lambda, c_0) \Bigm|
  y \in Y \colon \dist(y, K) \ge r, \:
  \Phi(y, \lambda, c_0) < + \infty \Big\} \ge \alpha
\]
for all $c \ge c(r, \alpha)$. Consequently, the function $\Phi$ satisfies assumption $(A12)$.

\textbf{Assumption (A12)${}_s$.} Choose some $c_0 > 0$ and let $\Lambda_0 \subset \Lambda$ be a bounded set. Clearly, 
one can find bounded sets $\Lambda_{i0} \subset \Lambda_i$ such that $\Lambda_0 \subseteq \widehat{\Lambda}_0$, where 
$\widehat{\Lambda}_0 := \Lambda_{10} \times \ldots \times \Lambda_{m0}$.

By assumption $(A12)_s$ for the functions $\Phi_i$, for any $i \in I$, $r > 0$, and $\alpha > 0$ one can find 
$c_i(r, \alpha, \Lambda_{i0}) \ge c_0$ such that
\[
  \inf_{\lambda \in \Lambda_{i0}} \inf\Big\{ \Phi_i(y_i, \lambda_i, c) - \Phi(y_i, \lambda_i, c_0) \Bigm|
  y_i \in Y_i(r, \Phi_i) \Big\} \ge \alpha \quad \forall c \ge c_i(r, \alpha, \Lambda_{i0}),
\]
where $Y_i(r, \Phi_i) := \{ y_i \in Y_i \mid \dist(y_i, K_i) \ge r, |\Phi_i(y_i, \lambda_i, c_0)| < + \infty \}$.

Denote $Y(r, \Phi) = \{ y \in Y \mid \dist(y, K) \ge r, \: |\Phi(y, \lambda, c_0)| < + \infty \}$. Fix some $r > 0$ 
and choose any $y \in Y(r, \Phi)$. Then $\dist(y_i, K_i) \ge r/m$ for some $i \in I$. Hence with the use of assumption
$(A7)$ one gets that 
\begin{multline*}
  \inf_{\lambda \in \Lambda_0} \inf\Big\{ \Phi(y, \lambda, c) - \Phi(y, \lambda, c_0) \Bigm| y \in Y(r, \Phi) \Big\}
  \\
  \ge \inf_{\lambda \in \widehat{\Lambda}_0} \inf\Big\{ \Phi(y, \lambda, c) - \Phi(y, \lambda, c_0) \Bigm| 
  y \in Y(r, \Phi) \Big\}
  \\
  \ge \inf_{\lambda \in \Lambda_{i0}} \inf\Big\{ \Phi_i(y_i, \lambda_i, c) - \Phi(y_i, \lambda_i, c_0) \Bigm|
  y_i \in Y_i(r/m, \Phi_i) \Big\} \ge \alpha 
\end{multline*}
for any $c \ge c(r, \alpha, \Lambda_0) := \max\{ c_j(r/m, \alpha, \Lambda_{i0}) \mid j \in I \}$. Since $\alpha > 0$
was chosen arbitrarily, one can conclude that the function $\Phi$ satisfies assumption $(A12)_s$.

\textbf{Assumption (A13).} If sequences $\{ c_n \} \subset (0, + \infty)$ and $\{ y_n \} \subset Y$ are such that
$c_n \to + \infty$ and $\dist(y_n, K) \to 0$ as $k \to \infty$, then $\dist(y_{in}, K_i) \to 0$ as $n \to \infty$ and
$\liminf_{n \to \infty} \Phi_i(y_{in}, \lambda, c_n) \ge 0$ for all $i \in I$. From these inequalities it obviously
follows that $\liminf_{n \to \infty} \Phi(y_n, \lambda, c_n) \ge 0$ as well.

\textbf{Assumption (A13)${}_s$.} The proof is essentially the same as the proof for assumption $(A13)$.

\textbf{Assumption (A14).} Fix any $\lambda \in \Lambda$ and a sequence $\{ c_n \} \subset (0, + \infty)$ such that
$c_n \to + \infty$ as $n \to \infty$. By our assumption for any $i \in I$ there exists a sequence 
$\{ t_{in} \} \subset (0, + \infty)$ converging to zero and such that for any $\{ y_{in} \} \subset Y_i$ with
$\dist(y_{in}, K_i) \le t_{in}$ for all $n \in \mathbb{N}$ one has $\Phi_i(y_{in}, \lambda, c_n) \to 0$ as 
$n \to \infty$. 

Define $t_n = \min\{ t_{1n}, \ldots, t_{mn} \}$ and choose any sequence $\{ y_n \} \subset Y$ such that
$\dist(y_n, K) \le t_n$ for all $n \in \mathbb{N}$. Then for any $i \in I$ and $n \in \mathbb{N}$ one has 
$\dist(y_{in}, K_i) \le t_{in}$, which implies that $\Phi_i(y_{in}, \lambda, c_n) \to 0$ as $n \to \infty$ and,
therefore, $\Phi(y_n, \lambda, c_n) \to 0$ as $n \to \infty$.

\textbf{Assumption (A14)${}_s$.} The proof is the same as the proof for assumption $(A14)$.

\textbf{Assumption (A15).} If $\{ \lambda_n \} \subset \Lambda$ and $\{ c_n \}$ are bounded sequences and a sequence
$\{ y_n \} \subset Y$ is such that $\dist(y_n, K) \to 0$ as $n \to \infty$, then the sequences $\{ \lambda_{in} \}$
are also bounded and $\dist(y_{in}, K_i) \to 0$ as $n \to \infty$ for all $i \in I$. Consequently, one has
\[
  \limsup_{n \to \infty} \Phi_i(y_{in}, \lambda_{in}, c_n) \le 0 \quad \forall i \in I.
\]
Applying these inequalities one can readily check that 
$\limsup\limits_{n \to \infty} \Phi(y_n, \lambda_n, c_n) \le 0$.

The proof of the claim of the theorem for the restricted versions of assumptions $(A13)$--$(A15)$, $(A13)_s$, and
$(A14)_s$ is essentially the same as the proofs for the non-restricted versions of these assumptions. 
\end{proof}

\begin{remark}
Let us note that assumptions $(A1)$ and $(A7)$ are satisfied for all particular augmented Lagrangians presented in this
paper and all augmented Lagrangians known to the author.
\end{remark}

Thus, the previous theorem allows one to analyse augmented Lagrangians for each particular type of cone constraints
separately and then simply define an augmented Lagrangian for problems with several different types of cone constraints
as in \eqref{eq:SeparableAugmLagr}. Then the basic assumptions will be satisfied for this augmented Lagrangian by
Theorem~\ref{thrm:BasicAssumptionSeparateCase}.

\section{Examples of augmented Lagrangians}
\label{sect:Examples}

Let us present some particular examples of augmented Lagrangians for various types of cone constraints and discuss which
of the basic assumptions are satisfied for these augmented Lagrangians. Our aim is to show that the basic assumptions
are not restrictive and satisfied for most augmented Lagrangians appearing in the literature.

Many examples of augmented Lagrangians were already presented in the author's previous paper \cite{Dolgopolik2018}.
However, many of the basic assumptions from this paper are completely new and were not used in \cite{Dolgopolik2018}
(e.g. assumptions $(A9)$, $(A10)$, and $(A13)$--$(A15)$, as well as their stronger and restricted versions). Therefore,
for the sake of completeness we will present detailed descriptions of all examples, even though they partially overlap
with the contents of \cite[Section~3]{Dolgopolik2018}.

\subsection{Augmented Lagrangians for problems with general cone constraints}

First, we consider an augmented Lagrangian that is defined for any cone constrained optimization problem of the form
$(\mathcal{P})$. Since particular versions of this augmented Lagrangian are apparently studied and used in optimization
methods more often than any other augmented Lagrangian, we will discuss it in more details than other examples.

\begin{example}[Rockafellar-Wets' augmented Lagrangian] \label{ex:RockafellarWetsAugmLagr}
Let $\sigma \colon Y \to [0, + \infty]$ be such that $\sigma(0) = 0$ and $\sigma(y) \ne 0$ for any $y \ne 0$.
In particular, one can set $\sigma(y) = \| y \|$ or $\sigma(y) = 0.5 \| y \|^2$. Define
\begin{equation} \label{def:RockafellarWetsAugmLagr}
  \Phi(y, \lambda, c) = \inf_{p \in K - y} \big( - \langle \lambda, p \rangle + c \sigma(p) \big).
\end{equation}
The augmented Lagrangian with the term $\Phi$ defined in this way was first introduced by Rockafellar and Wets
\cite[Section~11.K]{RockafellarWets} (see also \cite{HuangYang2003,ShapiroSun,HuangYang2005,Dolgopolik2017}). Various
generalizations of this augmented Lagrangian were studied in \cite{BurachikIusemMelo,WangYangYang,WangLiuQu}.

\begin{lemma}
Let $\Lambda = Y^*$. Then the function $\Phi$ defined in \eqref{def:RockafellarWetsAugmLagr} satisfies:
\begin{enumerate}
\item{assumptions $(A1)$--$(A4)$, $(A7)$, $(A9)$, $(A9)_s$, and $(A10)$ in the general case;}

\item{assumptions $(A5)$ and $(A6)$, if $\sigma(ty) = o(t)$ as $t \to 0$ 
(that is, $\sigma(ty) / t \to 0$  as $t \to 0$) for any $y \in Y$;}

\item{assumption $(A8)$, if the function $\sigma$ is convex;}

\item{assumption $(A11)$ with $\Phi_0(\lambda) \equiv \lambda$, if $\sigma(y) = 0.5 \| y \|^2$ and $Y$ is a Hilbert
space;}

\item{assumptions $(A12)$ and $(A12)_s$, if $\sigma$ has a valley at zero, that is, for any neighbourhood $U$ of zero
in $Y$ there exists $\delta > 0$ such that $\sigma(y) \ge \delta$ for all $y \in Y \setminus U$;}

\item{assumptions $(A13)$ and $(A13)_s$, if $\sigma(y) \ge \omega(\| y \|)$ for all $y \in Y$ and some continuous
function $\omega \colon [0, + \infty) \to [0, + \infty)$ such that $\omega(t) = 0$  if and only if $t = 0$, and
$\liminf_{t \to + \infty} \omega(t) / t > 0$;
}

\item{assumptions $(A14)$ and $(A14)_s$, if $\sigma$ is continuous at zero and there exists a continuous
function $\omega \colon [0, + \infty) \to [0, + \infty)$ such that $\sigma(y) \ge \omega(\| y \|)$ for all $y \in Y$,
$\omega(t) = 0$ if and only if $t = 0$, and $\liminf_{t \to + \infty} \omega(t) / t > 0$;
}

\item{assumption $(A15)$, if the function $\sigma$ is continuous at zero.}
\end{enumerate}
\end{lemma}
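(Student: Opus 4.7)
The overarching strategy is to exploit the explicit expression $\Phi(y, \lambda, c) = \inf_{p \in K - y}\bigl(-\langle \lambda, p\rangle + c\sigma(p)\bigr)$ and, where convenient, the substitution $q = p + y$. The bulk of the ``soft'' assumptions follow immediately by choosing judicious test points. For $(A1)$, the choice $p = 0 \in K - y$ gives $\Phi \le 0$; for $(A2)$, taking $\lambda = 0 \in \Lambda = Y^*$ gives $\Phi(y, 0, c) = \inf_p c\sigma(p) \ge 0$; for $(A4)$, writing $p = k - y$ with $k \in K$ and using $\lambda \in K^*$ with $\langle \lambda, y\rangle = 0$ yields $-\langle \lambda, p\rangle + c\sigma(p) \ge 0$, which is attained at $p = 0$. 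For $(A3)$, the Hahn--Banach theorem produces $\lambda \in K^*$ with $\langle \lambda, y\rangle > 0$ whenever $y \notin K$, and then $\Phi(y, t\lambda, c) \ge t\langle \lambda, y\rangle \to +\infty$. Monotonicity $(A7)$, concavity $(A9)$ and $(A9)_s$, and upper semicontinuity $(A10)$ are immediate because $\Phi$ is a pointwise infimum of functions that are non-decreasing in $c$ and affine (hence continuous and concave) in $(\lambda, c)$.

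For $(A5)$, I would take $p = -sy$ with $s \in (0, 1]$ (so that $(1-s)y \in K$), yielding $s\langle \lambda, y\rangle + c\sigma(-sy) = -s|\langle \lambda, y\rangle| + o(s) < 0$ for small $s > 0$ via $\sigma(ty) = o(t)$; assumption $(A6)$ is analogous with $p = sk$ for some $k \in K$ satisfying $\langle \lambda, k\rangle > 0$. For $(A8)$, the rewriting $\Phi(y, \lambda, c) = \langle \lambda, y\rangle + \inf_{q \in K}\bigl(-\langle \lambda, q\rangle + c\sigma(q - y)\bigr)$ displays $\Phi$ as an affine-plus-marginal function of a jointly convex expression, so it is convex in $y$; monotonicity with respect to $\preceq$ follows from the set inclusion $K - y_2 \subseteq K - y_1$ whenever $y_1 - y_2 \in K$. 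For $(A11)$, completing the square in $\sigma = \tfrac{1}{2}\|\cdot\|^2$ gives
\[
  \Phi(y, \lambda, c) = \frac{c}{2}\dist(y + \lambda/c, K)^2 - \frac{\|\lambda\|^2}{2c},
\]
so $D_y\Phi = c\bigl(y + \lambda/c - P_K(y + \lambda/c)\bigr)$; at a point with $y \in K$, $\lambda \in K^*$ and $\langle \lambda, y\rangle = 0$, Moreau's decomposition yields $P_K(y + \lambda/c) = y$, hence $D_y\Phi = \lambda$ and $\Phi_0 = \mathrm{id}_{K^*}$ is surjective onto $K^*$, with the biconditional on $\langle \Phi_0(\lambda), y\rangle = 0$ trivially satisfied.

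For $(A12)$ and $(A12)_s$ the key observation is that every $p \in K - y$ satisfies $\|p\| \ge \dist(y, K) \ge r$, so the valley-at-zero property of $\sigma$ yields $\delta = \delta(r) > 0$ with $\sigma(p) \ge \delta$ uniformly on $K - y$. Then for $c \ge c_0$ one has $-\langle \lambda, p\rangle + c\sigma(p) \ge -\langle \lambda, p\rangle + c_0\sigma(p) + (c - c_0)\delta$, and taking the infimum gives $\Phi(y, \lambda, c) - \Phi(y, \lambda, c_0) \ge (c - c_0)\delta$, a bound uniform in $\lambda$ and $y$ that covers both $(A12)$ and $(A12)_s$ at once.

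The main obstacle is $(A13)$ and $(A14)$, where one must control $\Phi$ in the small-$\|p\|$ regime as $\dist(y_n, K) \to 0$ and $c_n \to +\infty$ simultaneously. For $(A13)$, using $-\langle \lambda, p\rangle + c_n\sigma(p) \ge -\|\lambda\|t + c_n\omega(t)$ with $t = \|p\| \ge r_n := \dist(y_n, K)$, I would split $t$ into $[r_n, \rho]$, $[\rho, T]$, $[T, +\infty)$: on the first the trivial bound $-\|\lambda\|\rho$ suffices; on the middle the positive minimum of $\omega$ on $[\rho, T]$ multiplied by $c_n$ diverges to $+\infty$; on the tail the growth $\omega(t) \ge \alpha t$ makes the bracket nonnegative once $c_n\alpha > \|\lambda\|$. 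Letting $\rho \to 0$ after sending $n \to \infty$ gives the desired liminf $\ge 0$; $(A13)_s$ is identical with $\|\lambda\|$ replaced by a uniform bound on $\Lambda_0$. For $(A14)$ I would combine the lower bound from $(A13)$ with an upper bound built by selecting a near-projection $p_n \in K - y_n$ with $\|p_n\| \le 2 t_n$, giving $\Phi(y_n, \lambda, c_n) \le 2\|\lambda\| t_n + c_n M(2t_n)$ where $M(\delta) := \sup_{\|p\| \le \delta}\sigma(p) \to 0$ by continuity of $\sigma$ at $0$; choosing $t_n \to 0$ slowly enough that $c_n M(2t_n) \to 0$ (for instance by picking $t_n$ with $M(2t_n) \le 1/(n c_n)$) forces both bounds to $0$. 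The variant $(A14)_s$ uses a uniform bound on $\|\lambda_n\|$, while $(A15)$ follows from the same near-projection argument with $\{c_n\}$ bounded, which removes the need to balance $t_n$ against $c_n$.
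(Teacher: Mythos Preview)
Your proposal is correct and follows essentially the same route as the paper's own proof: the verification of $(A1)$--$(A4)$, $(A7)$, $(A9)$, $(A9)_s$, $(A10)$, $(A5)$--$(A6)$, $(A12)$--$(A12)_s$, and $(A15)$ is argued in the paper by exactly the test points and infimum-of-affine observations you use, and your treatment of $(A14)$/$(A14)_s$ via a near-projection $p_n$ with $c_n\sigma(p_n)\to 0$ matches the paper's argument. The only cosmetic differences are that the paper proves $(A8)$ by a direct computation rather than invoking the marginal-function lemma, cites an external reference for $(A11)$ where you supply the Moreau-decomposition calculation, and for $(A13)$ simply asserts that $\liminf_{n}\inf_{t\ge 0}(-Rt+c_n\omega(t))\ge 0$ is ``readily checked'' while you spell out the three-region split; none of this changes the substance.
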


\begin{proof}
We divide the proof of the lemma into several parts corresponding to its separate statements.

\textbf{Part 1.} Assumptions $(A1)$ (set $p = 0$), $(A2)$ (set $\lambda = 0$), and $(A7)$ are obviously satisfied. 
Assumption $(A3)$ is satisfied for $\lambda \in Y^*$ from the separation theorem for the sets $\{ y \}$ and
$K$. Assumption $(A4)$ is satisfied, since if $\langle \lambda, y \rangle = 0$ and $\lambda \in K^*$, then
\[
  \Phi(y, \lambda, c) = \inf_{p \in K} \big( - \langle \lambda, p \rangle + c \sigma(p - y) \big)
  \ge c \inf_{p \in K} \sigma(p - y) \ge 0, 
\]
which along with assumption $(A1)$ implies that $\Phi(y, \lambda, c) = 0$. Assumptions $(A9)$, $(A9)_s$, and $(A10)$ 
are satisfied by virtue of the fact that the function $(\lambda, c) \mapsto \Phi(y, \lambda, c)$ is the infimum of 
a family of linear functions. 

\textbf{Part 2.} Let $y \in K$ and $\lambda \in K^*$ be such that $\langle \lambda, y \rangle \ne 0$. For any $t$ in a
sufficiently small neighbourhood of zero one has $z(t) = (1 + t \sign(\langle \lambda, y \rangle)) y \in K$, which
implies that for $p = z(t) - y \in K - y$ the following inequalities hold true:
\[
  \Phi(y, \lambda, c) \le - \langle \lambda, p \rangle + c \sigma(p)
  = - t \big| \langle \lambda, y \rangle \big| + c \sigma\big( t \sign(\langle \lambda, y \rangle) y \big).
\]
The last expression is negative for any sufficiently small $t$, since $\sigma(t y) = o(t)$, that is, assumption $(A5)$
holds true. 

Let now $y \in K$ and $\lambda \in \Lambda \setminus K^*$. For any such $\lambda$ one can find $p_0 \in K$ for which 
$\langle \lambda, p_0 \rangle < 0$. Then putting $p = t p_0$ for $t > 0$ (note that $t p_0 + y \in K$, since $y \in K$
and $K$ is a convex cone, which yields $p \in K - y$) one gets
\[
  \Phi(y, \lambda, c) \le - t \langle \lambda, p_0 \rangle + c \sigma(t p_0) < 0
\]
for any sufficiently small $t$, thanks to the fact that $\sigma(t p_0) = o(t)$. 

\textbf{Part 3.} Fix any $\alpha \in (0, 1)$ and $y_1, y_2 \in Y$. Choose any $M_i > \Phi(y_i, \lambda, c)$, 
$i \in \{ 1, 2 \}$. By definition one can find $p_i \in K - y_i$ such that 
$M_i > - \langle \lambda, p_i \rangle + c \sigma(p_i)$, $i \in \{ 1, 2 \}$. Then for 
$p(\alpha) = \alpha p_1 + (1 - \alpha) p_2 \in K - (\alpha y_1 + (1 - \alpha) y_2)$ one has
\begin{align*}
  \Phi(\alpha y_1 + (1 - \alpha) y_2, \lambda, c) &\le - \langle \lambda, p(\alpha) \rangle + c \sigma(p(\alpha))
  \\
  &\le \alpha \Big( - \langle \lambda, p_1 \rangle + c \sigma(p_1) \Big) 
  + (1 - \alpha) \Big( - \langle \lambda, p_2 \rangle + c \sigma(p_2) \Big)
  \\
  &< \alpha M_1 + (1 - \alpha) M_2.
\end{align*}
Hence by \cite[Theorem~I.4.2]{Rockafellar} one can conclude that the function $\Phi(\cdot, \lambda, c)$ is convex. Let
us now show that it is non-decreasing with respect to the binary relation induced by the cone $-K$. 

Indeed, fix any $y_1, y_2 \in Y$ such that $y_1 \preceq y_2$, i.e. $y_2 - y_1 \in -K$. One can obviously suppose that 
$\Phi(y_2, \lambda, c) < + \infty$. By definition for any $M > \Phi(y_2, \lambda, c)$ one can find $p \in K - y_2$ such
that $M \ge - \langle \lambda, p \rangle + c \sigma(p)$. Let $z \in K$ be such that $p = z - y_2$. Note that 
$z - (y_2 - y_1) \in K$ due to the fact that $-(y_2 - y_1) \in K$ by our assumption.
Then $p = z - (y_2 - y_1) - y_1 \in K - y_1$, which yields
\[
  \Phi(y_1, \lambda, c) \le - \langle \lambda, p \rangle + c \sigma(p) \le M.
\]
Since $M > \Phi(y_2, \lambda, c)$ was chosen arbitrarily, one can conclude that the function $\Phi(\cdot, \lambda, c)$
is non-decreasing with respect to the binary relation $\preceq$.

\textbf{Part 4.} The proof of this statement of the lemma can be found in \cite{ShapiroSun}.

\textbf{Part 5.} Fix any $c_0 > 0$. For any $y \in Y$ and $\lambda \in \Lambda$ such that $\Phi(y, \lambda, c_0)$ is
finite one has
\begin{align*}
  \Phi(y, \lambda, c) &= \inf_{p \in K - y} \big( - \langle \lambda, p \rangle + (c - c_0 + c_0) \sigma(p) \big)
  \\
  &\ge \inf_{p \in K - y} \big( - \langle \lambda, p \rangle + c_0 \sigma(p) \big)
  + (c - c_0) \inf_{p \in K - y} \sigma(p)
\end{align*}
for any $c \ge c_0$. If $\dist(y, K) \ge r$, then $\| p \| \ge r$ for any $p \in K - y$. Therefore by our assumption
there exists $\delta > 0$, independent on $\lambda \in \Lambda$, $c \ge c_0$, and 
$y \in \{ z \in K \mid \dist(z, K) \ge r \}$, such that
\[
  \Phi(y, \lambda, c) - \Phi(y, \lambda, c_0) \ge (c - c_0) \delta.
\]
With the use of this inequality one can easily prove that assumptions $(A12)$ and $(A12)_s$ hold true.

\textbf{Part 6.} Let $\{ c_n \} \subset (0, + \infty)$ be an increasing unbounded sequence. Fix any bounded set 
$\Lambda_0 \subset \Lambda$. Then for any $y \in Y$ and $\lambda \in \Lambda_0$ one has
\[
  \Phi(y, \lambda, c_n) \ge \inf_{p \in K - y} \big( - \| \lambda \| \| p \| + c_n \omega(\| p \|) \big)
  \ge \inf_{t \ge 0} \big( - R t + c_n \omega(t) \big),  
\]
where $R > 0$ is such that $\| \lambda \| \le R$ for all $\lambda \in \Lambda_0$. By applying the assumptions on 
the function $\omega$ one can readily check that
\[
  \liminf_{n \to \infty} \inf_{t \ge 0} \big( - R t + c_n \omega(t) \big) \ge 0,
\]
which yields
\begin{equation} \label{eq:RockafellarWetsLimitingValue}
  \liminf_{n \to \infty} \inf_{\lambda \in \Lambda_0} \inf_{y \in Y} \Phi(y, \lambda, c_n) \ge 0.
\end{equation}
Consequently, assumptions $(A13)$ and $(A13)_s$ hold true. 

\textbf{Part 7.} Let $\{ \lambda_n \} \subset Y^*$ be a bounded sequence and a sequence 
$\{ c_n \} \subset (0, + \infty)$ be such that $c_n \to + \infty$ as $n \to \infty$. Due to the continuity of $\sigma$
at zero, for any $n \in \mathbb{N}$ one can find $\delta_n > 0$ such that for all $p \in B(0, \delta_n)$ one has
$\sigma(p) \le 1 / (c_n n)$. One can obviously suppose that $\delta_n \to 0$ as $n \to \infty$.

Define $t_n = \delta_n / 2$ for all $n \in \mathbb{N}$. Then for any sequence $\{ y_n \} \subset Y$ such that
$\dist(y_n, K) \le t_n$ for all $n \in \mathbb{N}$ one can find a sequence $\{ z_n \} \subset K$ such that 
$\| z_n - y_n \| \le \delta_n$ for all $n \in \mathbb{N}$. Observe that for $p_n = z_n - y_n \in K - y_n$ one has
\begin{equation} \label{eq:RockafellarWetsUpperEstim}
  \Phi(y_n, \lambda_n, c_n) \le - \langle \lambda_n, p_n \rangle + c_n \sigma(p_n) 
  \quad \forall n \in \mathbb{N},
\end{equation}
and the right-hand side of this inequality converges to zero, since $p_n \to 0$ as $n \to \infty$ and the sequence 
$\{ \lambda_n \}$ is bounded. Combining this fact with inequality \eqref{eq:RockafellarWetsLimitingValue} one
obtains that we have found a sequence $\{ t_n \}$ for which assumptions $(A14)$ and $(A14)_s$ hold true.

\textbf{Part 8.} Let $\{ \lambda_n \} \subset Y^*$ and $\{ c_n \} \subset (0, + \infty)$ be  bounded sequences, and 
a sequence $\{ y_n \} \subset Y$ be such that $\dist(y_n, K) \to 0$ as $n \to \infty$. Then thanks to the continuity 
of $\sigma$ at zero one can find a sequence $\{ z_n \} \subset K$ such that $\| p_n \| \to 0$ and $\sigma(p_n) \to 0$
as $n \to \infty$, where $p_n = z_n - y_n$. Now, applying inequality \eqref{eq:RockafellarWetsUpperEstim} and 
taking into account the fact that the right-hand side of this inequality obviously converges to zero one can conclude
that assumption $(A15)$ is valid.
\end{proof}

Thus, \textit{all} basic assumptions are satisfied for $\sigma(y) = 0.5 \| y \|^2$. In the other important case
of the sharp Lagrangian
\cite{Gasimov,BurachikGasimov,BurachickIusemMelo_SharpLagr,BagirovOzturkKasimbeyli,BurachikLiu2023}, that is, the case
when $\sigma(y) = \| y \|$, all assumptions, except for $(A5)$, $(A6)$, and $(A11)$, hold true. It should be noted that
these assumptions are \textit{not} used in the main results presented in this article and needed only to strengthen some
of these results in the convex case.

\begin{remark}
Note that neither assumption $(A5)$ nor assumption $(A6)$ are satisfied for $\sigma(y) = \| y \|$ due to the fact that 
in this case $\Phi(y, \lambda, c) \ge 0$ for all $y \in Y$, if $c > \| \lambda \|$. Thus, if $\sigma(t y) \ne o(t)$
for some $y \in Y$, then assumptions $(A5)$ and $(A6)$ might not hold true.
\end{remark}
\end{example}

\subsection{Augmented Lagrangians for problems with equality constraints}

Let us now consider the following equality constrained problem:
\[
  \min\: f(x) \quad \text{subject to} \quad G(x) = 0, \quad x \in Q.
\]
The constraint $G(x) = 0$ can obviously be rewritten as the cone constraint $G(x) \in K$, if one puts $K = \{ 0 \}$.
The binary relation $\preceq$ in this case coincides with the equality relation ``$=$'', and all functions are
non-decreasing with respect to this relation.

\begin{example}[Hestenes-Powell's augmented Lagrangian] \label{ex:HestenesPowellAugmLagr}
Define $\Lambda = Y^*$ and
\[
  \Phi(y, \lambda, c) = \langle \lambda, y \rangle + \frac{c}{2} \| y \|^2.
\]
Then the corresponding augmented Lagrangian is a particular case of the augmented Lagrangian from
Example~\ref{ex:RockafellarWetsAugmLagr} with $\sigma(y) = 0.5 \| y \|^2$. Therefore, this function $\Phi$ satisfies
all basic assumptions, except for assumption $(A11)$, in the general case, and it satisfies assumption $(A11)$ with
$\Phi_0(\lambda) \equiv \lambda$, if $Y$ is a Hilbert space. Note that in the case $Y = \mathbb{R}^m$ 
the corresponding augmented Lagrangian $\mathscr{L}(\cdot)$ coincides with the Hestenes-Powell augmented Lagrangian
\cite{Hestenes,Powell,BirginMartinez}.
\end{example}

\begin{example}[sharp Lagrangian] \label{ex:SharpLagrangian_eq}
Define $\Lambda = Y^*$ and
\[
  \Phi(y, \lambda, c) = \langle \lambda, y \rangle + c \| y \|.
\]
Then the corresponding augmented Lagrangian is a particular case  of the augmented Lagrangian from
Example~\ref{ex:RockafellarWetsAugmLagr} with $\sigma(y) = \| y \|$. Therefore, this function $\Phi$ satisfied all
basic assumptions, except for assumptions $(A5)$, $(A6)$, and $(A11)$.
\end{example}

In the case of equality constrained problems of the form
\[
  \min\: f(x) \quad \text{subject to} \quad g_j(x) = 0, \quad i \in I, \quad x \in Q,
\]
where $I = \{ 1, \ldots, m \}$ and $g_i \colon X \to \mathbb{R}$ are given function, one can define a more general
class of augmented Lagrangians. This problem can be written as the problem $(\mathcal{P})$ with $Y = \mathbb{R}^m$,
$G(\cdot) = (g_1(\cdot), \ldots, g_m(\cdot))$, and $K = \{ 0 \}$. Note that in this particular case the dual space
$Y^*$ can be identified with $\mathbb{R}^m$.

\begin{example}[Mangasarian's augmented Lagrangian] \label{ex:MangasarianAugmLagr_eq}
Let $\phi \colon \mathbb{R} \to \mathbb{R}$ be a twice differentiable strictly convex function such that 
$\phi(0) = \phi'(0) = 0$ and $\phi'(\cdot)$ is surjective. Define
\[
  \Phi(y, \lambda, c) = \sum_{i = 1}^m \frac{1}{c} \big( \phi(c y_i + \lambda_i) - \phi(\lambda_i) \big)
\]
for all $y = (y_1, \ldots, y_m) \in Y$ and $\lambda = (\lambda_1, \ldots, \lambda_m) \in \Lambda$. Then the
corresponding augmented Lagrangian $\mathscr{L}(\cdot)$ coincides with Mangasarian's augmented Lagrangian from
\cite{Mangasarian} (see also \cite{WuLuo2012b}). In the case $\phi(t) = 0.5 t^2$, this augmented Lagrangian coincides
with the Hestenes-Powell augmented Lagrangian. One can also put, e.g. $\phi(t) = |t| t^{2n} / (2n + 1)$ or 
$\phi(t) = t^{2n}/2n$ for any $n \in \{ 1, 2, \ldots \}$.

Let $\Lambda = Y^* \cong \mathbb{R}^m$. Then one can readily check that all assumptions, except for assumptions
$(A9)$ and $(A9)_s$, are satisfied in the general case (assumption $(A11)$ holds true with 
$\Phi_0(\lambda) \equiv (\phi'(\lambda_1), \ldots, \phi'(\lambda_m))$). Assumptions $(A9)$ and $(A9)_s$ are
satisfied if and only if $\phi(t) = a t^2$ for some $a > 0$. The validity of these assumptions in the case
when the function $\phi$ is quadratic can be readily verified directly. Let us prove the converse statement.

Suppose that the function $\lambda \to \Phi(y, \lambda, c)$ is concave. Then applying the second order derivative test
for concavity one gets that $\phi''(\lambda_i) \ge \phi''(c y_i + \lambda_i)$ for all $\lambda_i, y_i \in \mathbb{R}$
and $c > 0$ or, equivalently, $\phi''(\cdot)$ is a constant function. Hence bearing in mind the conditions 
$\phi(0) = \phi'(0) = 0$ one gets that $\phi(t) = a t^2$ for some $a > 0$. 
\end{example}

\subsection{Augmented Lagrangians for problems with inequality constraints}
\label{subsect:AugmLagrInequalConstr}

Next we will present several examples of augmented Lagrangians for the inequality constrained problem
\begin{equation} \label{prob:InequalityConstr}
  \min\: f(x) \quad \text{subject to} \quad g_i(x) \le 0, \quad i \in I, \quad x \in Q,
\end{equation}
where $I = \{ 1, \ldots, m \}$ and $g_i \colon X \to \mathbb{R}$ are given functions. This problem can be written 
as the problem $(\mathcal{P})$ with $Y = \mathbb{R}^m$, $G(\cdot) = (g_1(\cdot), \ldots, g_m(\cdot))$, and
$K = \mathbb{R}_-^m$, where $\mathbb{R}_- = (- \infty, 0]$. The dual space $Y^*$ can be identified with $\mathbb{R}^m$, 
while $K^*$ can be identified with $\mathbb{R}^m_+$, where $\mathbb{R}_+ = [0, + \infty)$. The binary relation 
$\preceq$ in this case is the coordinate-wise partial order.

All particular augmented Lagrangians for problem \eqref{prob:InequalityConstr} used in optimization methods and known to
the author are \textit{separable} (except for nonlinear Lagrangians; see
\cite{RubinovYang,BurachikRubinov,WangYangYang}), that is, the corresponding function $\Phi(y, \lambda, c)$ has the form
\begin{equation} \label{eq:SeparableAugmLagr_IneqConstr}
  \Phi(y, \lambda, c) = \sum_{i = 1}^m \Phi_i(y_i, \lambda_i, c) 
  \quad \forall y = (y_1, \ldots, y_m), \: \lambda = (\lambda_1, \ldots, \lambda_m)
\end{equation}
for some functions $\Phi_i \colon \mathbb{R}^2 \times (0, + \infty) \to \mathbb{R} \cup \{ \pm \infty \}$. Although 
one can can choose different functions $\Phi_i$ for different $i \in I$ (that is, for different inequality constraints),
to the best of the author's knowledge, only the case when $\Phi_i$ are the same for all $i \in I$ is considered in 
the vast majority of papers on augmented Lagrangians for inequality constrained problems.

\begin{example}[essentially quadratic/Hestenes-Powell-Rockafellar's augmented Lagrangian] \label{ex:HPR_AugmLagr}
Let $\phi \colon \mathbb{R} \to \mathbb{R}$ be a twice continuously differentiable strictly convex function such that 
$\phi(0) = \phi'(0) = 0$, and the derivative $\phi'(\cdot)$ is surjective. Following Bertsekas
\cite[Section~5.1.2, Example~1]{Bertsekas}, for any $y, \lambda \in \mathbb{R}$ define
\[
  P(y, \lambda) = \begin{cases}
    \lambda y + \phi(y), & \text{if } \lambda + \phi'(y) \ge 0,
    \\
    \min_{t \in \mathbb{R}} \big( \lambda t + \phi(t) \big), & \text{otherwise}
  \end{cases}
\]
(note that the minimum is finite and attained at any $t$ such that $\lambda + \phi'(t) = 0$, which exists due to 
the surjectivity of $\phi'(\cdot)$) and put
\[
  \Phi_i(y_i, \lambda_i, c) = \frac{1}{c} P(c y_i, \lambda) \quad \forall i \in I.
\]
The corresponding augmented Lagrangian $\mathscr{L}(\cdot)$ is called \textit{the essentially quadratic} augmented
Lagrangian for problem \eqref{prob:InequalityConstr} (see \cite{SunLiMcKinnon,LiuYang2008,WangLi2009}). In the case
$\phi(t) = t^2 / 2$ one has
\[
  \Phi_i(y_i, \lambda_i, c) = \lambda_i \max\left\{ y_i, - \frac{\lambda_i}{c} \right\}
  + \frac{c}{2} \max\left\{ y_i, - \frac{\lambda_i}{c} \right\}^2,
\]
and $\mathscr{L}(\cdot)$ is the well-known Hestenes-Powell-Rockafellar augmented Lagrangian 
\cite{Hestenes,Powell,Rockafellar73,Rockafellar74,Rockafellar1993,BirginMartinez}, which is a particular case of the
augmented Lagrangian from Example~\ref{ex:RockafellarWetsAugmLagr} with $\sigma(y) = \| y \|^2 / 2$ and $\| \cdot \|$
being the Euclidean norm.

Let $\lambda = Y^* = \mathbb{R}^m$. Then one can readily verify that all basic assumptions, except for assumption
$(A9)_s$, hold true in the general case (assumption $(A11)$ is satisfied with $\Phi_0(\lambda) \equiv \lambda$).
Assumption $(A9)_s$ is satisfied for $\phi(t) = a t^2$, $a > 0$.
\end{example}

\begin{example}[cubic augmented Lagrangian] \label{ex:CubicAugmLagr}
Let
\[
  \Phi_i(y_i, \lambda_i, c) 
  = \frac{1}{3c} \Big( \max\big\{ \sign(\lambda_i) \sqrt{|\lambda_i|} + c y_i, 0 \big\}^3 - |\lambda_i|^{3/2} \Big)
  \quad \forall i \in I.
\]
Then $\mathscr{L}(\cdot)$ coincides with \textit{the cubic augmented Lagrangian} \cite{Kiwiel}. One can easily
check that all basic assumptions, except for assumptions $(A9)$ and $(A9)_s$, are satisfied in this case with 
$\Lambda = Y^* = \mathbb{R}^m$ (assumption $(A11)$ is satisfied with $\Phi_0(\lambda) \equiv \lambda$). Assumption
$(A9)$ holds true, provided $\Lambda \subseteq K^*$, while assumption $(A9)_s$ is not satisfied for any choice of 
$\Lambda$.
\end{example}

\begin{example}[Mangasarian's augmented Lagrangian] \label{ex:MangasarianAugmLagr_ineq}
Let $\phi \colon \mathbb{R} \to \mathbb{R}$ be a twice continuously differentiable strictly convex function such that 
$\phi(0) = \phi'(0) = 0$ and the function $\phi'(\cdot)$ is surjective. Define
\begin{equation} \label{eq:MangasarianAugmLagr}
  \Phi_i(y_i, \lambda_i, c) 
  = \frac{1}{c} \Big( \phi\big( \max\{ c y_i + \lambda_i, 0 \} \big) - \phi(\lambda_i) \Big) \quad \forall i \in I.
\end{equation}
Then $\mathscr{L}(\cdot)$ coincides with the augmented Lagrangian introduced by Mangasarian \cite{Mangasarian}
and studied, e.g. in \cite{WuLuo2012b}. Let $\Lambda = Y^* = \mathbb{R}^m$. Then all basic assumptions, except for
assumptions $(A9)$ and $(A9)_s$, hold true (assumption $(A11)$ is satisfied with 
$\Phi_0(\lambda) = (\phi'(\max\{ \lambda_1, 0 \}), \ldots, \phi'(\max\{ \lambda_m, 0 \}))$). Assumptions $(A9)$ and
$(A9)_s$ are satisfied for $\phi(t) = a t^2$ with $a > 0$.
\end{example}

\begin{example}[exponential-type augmented Lagrangian] \label{ex:ExpTypeAugmLagr}
Let $\phi \colon \mathbb{R} \to \mathbb{R}$ be a twice differentiable strictly increasing function such that 
$\phi(0) = 0$. Define
\[
  \Phi_i(y_i, \lambda_i, c) = \frac{\lambda_i}{c} \phi(c y_i) \quad \forall i \in I.
\]
If $\phi(t) = e^t - 1$, then $\mathscr{L}(\cdot)$ coincides with \textit{the exponential penalty function}
\cite{Bertsekas,TsengBertsekas,SunLiMcKinnon,LiuYang2008,WangLi2009}. In turn, if $\phi(t) = 2 (\ln(e^t + 1) - \ln 2)$,
then $\mathscr{L}(\cdot)$ is the Polyak's \textit{log-sigmoid Lagrangian} \cite{Polyak2001,Polyak2002}. In the general
case we call the corresponding function $\mathscr{L}(\cdot)$ \textit{the exponential-type augmented Lagrangian}.

Let $\Lambda = K^* = \mathbb{R}^m_+$. Then assumptions $(A1)$--$(A6)$, $(A9)$, $(A10)$, and $(A15)$ are satisfied in 
the general case. Assumptions $(A7)$ and $(A8)$ hold true, provided the function $\phi$ is convex. Assumption $(A11)$ 
is satisfied with $\Phi_0(\lambda) \equiv \phi'(0) \lambda$ if and only if $\phi'(0) \ne 0$. Restricted versions
of assumptions $(A13)$, $(A13)_s$, $(A14)$, and $(A14)_s$ (see Remark~\ref{rmrk:RestrictedAssumptions}) are satisfied if
and only if $\phi(t) / t \to 0$ as $t \to - \infty$, while non-restricted versions of these assumptions are satisfied if
and only if the function $\phi$ is bounded below. Finally, assumptions $(A9)_s$, $(A12)$, and $(A12)_s$ 
(put $\lambda = 0$) are never satisfied for the exponential-type augmented Lagrangian. 

Thus, all basic assumptions, except for assumptions $(A9)_s$, $(A12)$, and $(A12)_s$, are valid for the exponential
penalty function and the log-sigmoid Lagrangian.
\end{example}

\begin{example}[penalized exponential-type augmented Lagrangian] \label{ex:PenalizedExpTypeAugmLagr}
Suppose that $\phi \colon \mathbb{R} \to \mathbb{R}$ is a twice differentiable strictly increasing function such that 
$\phi(0) = 0$, and $\xi \colon \mathbb{R} \to \mathbb{R}$ is a twice continuously differentiable non-decreasing 
function such that $\xi(t) = 0$ for all $t \le 0$ and $\xi(t) > 0$ for all $t > 0$ (for example, one can set 
$\xi(t) = \max\{ 0, t \}^3$). Following Bertsekas \cite[Section~5.1.2, Example~2]{Bertsekas} define
\[
  \Phi_i(y_i, \lambda_i, c) = \frac{\lambda_i}{c} \phi(c y_i) + \frac{1}{c} \xi(c y_i) \quad \forall i \in I.
\]
Then the function $\mathscr{L}(\cdot)$ is called \textit{the penalized exponential-type augmented Lagrangian}
\cite{SunLiMcKinnon,LiuYang2008,WangLi2009}, since it is obtained from the augmented Lagrangian from the previous
example by adding the penalty term $\xi(c y_i) / c$.

Let $\Lambda = K^* = \mathbb{R}^m_+$. Then assumptions $(A1)$--$(A6)$, $(A9)$, $(A10)$, and $(A15)$, are satisfied in 
the general case. Assumptions $(A7)$ and $(A8)$ hold true, provided the functions $\phi$ and $\xi$ are convex.
Assumption $(A11)$ is satisfied with $\Phi_0(\lambda) \equiv \phi'(0) \lambda$ if and only if $\phi'(0) \ne 0$. 
Assumptions $(A12)$ and $(A12)_s$ are valid, provided $\xi(t) / t \to + \infty$ as $t \to \infty$ and $\phi$ is either 
bounded below or convex. Restricted assumptions $(A13)$, $(A13)_s$, $(A14)$, and $(A14)_s$, hold true if and only if
$\phi(t) / t \to 0$ as $t \to - \infty$, while non-restricted versions of these assumptions hold true if and only if
$\phi$ is bounded below.

Thus, if $\phi(t) = e^t - 1$ or $\phi(t) = 2 (\ln(e^t + 1) - \ln 2)$ and $\xi(t) = \max\{ 0, t \}^3$, then all basic
assumptions, except for assumption $(A9)_s$, hold true. 
\end{example}

\begin{example}[p-th power augmented Lagrangian] \label{ex:pthPowerAugmLagr}
Let $b \ge 0$ and a continuous non-decreasing function $\phi \colon \mathbb{R} \to \mathbb{R}_+$ be such that 
$\phi(t) > \phi(b) > 0$ for all $t > b$. For example, one can set $\phi(t) = e^t$ with $b \ge 0$ or 
$\phi(t) = \max\{ 0, t \}$ with $b > 0$. By our assumption the inequality $g_i(x) \le 0$ is satisfied if and only if
$\phi(g_i(x) + b) / \phi(b) \le 1$. Furthermore, $\phi(g_i(x) + b) \ge 0$ for all $x \in X$. Define
\[
  \Phi_i(y_i, \lambda_i, c) 
  = \frac{\lambda_i}{c + 1} \left( \left( \frac{\phi(y_i + b)}{\phi(b)} \right)^{c + 1} - 1 \right) 
  \quad \forall i \in I.
\]
Then $\mathscr{L}(\cdot)$ coincides with \textit{the p-th power augmented Lagrangian}
\cite{LiSun2001,WuLuo2012a,LiuYang2008}.

Let $\Lambda = K^* = \mathbb{R}^m_+$. Then assumptions $(A1)$--$(A7)$, $(A9)$, $(A10)$, $(A13)$--$(A15)$, $(A13)_s$, and
$(A14)_s$ hold true. Assumption $(A8)$ is satisfied, if the function $\phi$ is convex. Assumption $(A11)$ is satisfied
with $\Phi_0(\lambda) \equiv \phi'(b) \lambda$, provided $\phi$ is differentiable and $\phi'(b) \ne 0$. Finally,
assumptions $(A9)_s$, $(A12)$, and $(A12)_s$ are not satisfied for the p-th power augmented Lagrangian.
\end{example}

\begin{remark}
Let $\phi$ be as in the previous example and $\xi$ be as in Example~\ref{ex:PenalizedExpTypeAugmLagr}. Then by analogy
with the penalized exponential-type augmented Lagrangian one can define the \textit{penalized} p-th power augmented
Lagrangian as follows:
\[
  \Phi_i(y_i, \lambda_i, c) 
  = \frac{\lambda_i}{c + 1} \left( \left( \frac{\phi(y_i + b)}{\phi(b)} \right)^{c + 1} - 1 \right)
  + \frac{1}{c} \xi(c y_i) \quad \forall i \in I.
\]
If the function $\phi$ is convex and differentiable, $\phi'(b) \ne 0$, the function $\xi$ is convex, and 
$\xi(t) / t \to + \infty$ as $t \to \infty$, then one can verify that the penalized p-th power augmented Lagrangian
satisfies all basic assumption, except for assumption $(A9)_s$. Let us also mention that one can apply this trick of
adding the penalty term $\xi(c y_i)/c$ to any other augmented Lagrangian for inequality constrained problems, if it does
not satisfy assumptions $(A12)$ and $(A12)_s$, in order to construct the penalized version of this augmented Lagrangian
satisfying assumptions $(A12)$ and $(A12)_s$ and having all other properties of the non-penalized version.
\end{remark}

\begin{example}[hyperbolic-type augmented Lagrangian] \label{eq:HyperbolicAugmLagr}
Let $\phi \colon \mathbb{R} \to \mathbb{R}$ be a twice differentiable strictly increasing convex function such that 
$\phi(0) = 0$. Define
\[
  \Phi_i(y_i, \lambda_i, c) = \frac{1}{c} \phi(c \lambda_i y_i) \quad \forall i \in I.
\]
If $\phi(t) = t + \sqrt{t^2 + 1} - 1$, then $\mathscr{L}(\cdot)$ coincides with the hyperbolic augmented
Lagrangian \cite{Xavier,RamirezXavier}. In the general case we call such function $\mathscr{L}(x, \lambda, c)$
\textit{the hyperbolic-type augmented Lagrangian}.

Let $\Lambda = K^* = \mathbb{R}^m_+$. Then assumptions $(A1)$--$(A8)$, $(A10)$, $(A11)$, and $(A15)$ are satisfied in
the general case (assumption $(A11)$ is satisfied with $\Phi_0(\lambda) \equiv \phi'(0) \lambda$). Assumption $(A9)$ is
satisfied if and only if $\phi$ is a linear function. Restricted assumptions $(A13)$, $(A13)_s$, $(A14)$, and $(A14)_s$,
hold true if and only if $\phi(t) / t \to 0$ as $t \to - \infty$, while non-restricted versions of these assumptions
hold true if and only if $\phi$ is bounded below. Finally, assumptions $(A9)_s$, $(A12)$, and $(A12)_s$ are never
satisfied for the hyperbolic-type augmented Lagrangian.
\end{example}

\begin{example}[modified barrier function] \label{ex:ModifiedBarrierFunction}
Let $\phi \colon (- \infty, 1) \to \mathbb{R}$ be a twice differentiable strictly increasing function such that 
$\phi(0) = 0$ and $\phi(t) \to + \infty$ as $t \to 1$. Define
\[
  \Phi_i(y_i, \lambda_i, c) = \begin{cases}
    \frac{\lambda_i}{c} \phi(c y_i), & \text{if } cy_i < 1,
    \\
    + \infty, & \text{otherwise}
  \end{cases}
  \qquad \forall i \in I.
\]
Then augmented Lagrangian $\mathscr{L}(\cdot)$ coincides with the modified barrier function introduced by 
R. Polyak \cite{Polyak92}. In particular, in the case $\phi(t) = - \ln(1 - t)$ the augmented Lagrangian 
$\mathscr{L}(\cdot)$ is the modified Frisch function, while  in the case $\phi(t) = 1/(1 - t) - 1$ 
the augmented Lagrangian $\mathscr{L}(\cdot)$ is the modified Carrol function \cite{Polyak92} (see also
\cite{SunLiMcKinnon,LiuYang2008,WangLi2009}).

Let $\Lambda = K^* = \mathbb{R}^m_+$. Then assumptions $(A1)$--$(A6)$, $(A9)$, $(A10)$, $(A12)$, $(A12)_s$, and 
$(A15)$ are satisfied in the general case. Assumptions $(A7)$ and $(A8)$ hold true, if the function $\phi$ is 
convex. Assumption $(A11)$ is satisfied with $\Phi_0(\lambda) = \phi'(0) \lambda$ if and only if $\phi'(0) \ne 0$.
Restricted assumptions $(A13)$, $(A13)_s$, $(A14)$, and $(A14)_s$ hold true if and only if $\phi(t) / t \to 0$ as 
$t \to - \infty$, while non-restricted versions of these assumptions are valid if and only if the function $\phi$ is
bounded below. Finally, assumption $(A9)_s$ cannot hold true for the modified barrier function. 

Thus, the modified Carrol function satisfies all basic assumptions, except for assumption $(A9)_s$, while the modified
Frisch functions satisfies all assumptions, except for $(A9)_s$ and non-restricted assumptions $(A13)$, $(A13)_s$,
$(A14)$, and $(A14)_s$.
\end{example}

\begin{example}[He-Wu-Meng's augmented Lagrangian] \label{ex:HeWuMeng}
Let
\[
  \Phi_i(y_i, \lambda_i, c) = \frac{1}{c} \int_0^{c y_i} \big( \sqrt{t^2 + \lambda_i^2} + t \big) \, dt 
  \quad \forall i \in I.
\]
Then $\mathscr{L}(\cdot)$ coincides with the augmented Lagrangian introduced by He, Wu, and Meng
\cite{HeWuMeng}. Let us note that
\[
  \Phi_i(y_i, \lambda_i, c) = \frac{y_i}{2} \sqrt{(c y_i)^2 + \lambda_i^2} + \frac{c y_i^2}{2}
  + \frac{\lambda_i^2}{2c} \ln\big( \sqrt{(c y_i)^2 + \lambda_i^2} + c y_i \big) 
  - \frac{\lambda_i^2}{2c} \ln |\lambda_i|,
\]
if $\lambda_i \ne 0$, and $\Phi_i(y_i, 0, c) = c y_i(y_i + |y_i|) / 2$.

Let $\Lambda = Y^* = \mathbb{R}^m$. Then assumptions $(A1)$--$(A5)$, $(A7)$, $(A8)$, $(A10)$, $(A11)$ with 
$\Phi_0(\lambda) \equiv \lambda$, $(A12)$, $(A12)_s$, $(A15)$, and restricted assumptions $(A13)$, $(A13)_s$, $(A14)$,
and $(A14)_s$ are satisfied in the general case. Assumption $(A6)$ holds true if and only if $\Lambda \subseteq K^*$,
since $\Phi(0, \lambda, c) = 0$ for all $\lambda \in Y^*$. Finally, assumptions $(A9)$, $(A9)_s$, $(A13)$, $(A13)_s$,
$(A14)$, and $(A14)_s$ are not satisfied for He-Wu-Meng's augmented Lagrangian in the general case. Let us note that 
the non-restricted versions of the last 4 assumptions are not satisfied due to the fact that 
$\Phi_i(y, \lambda, c) \to - \infty$ as $y \to - \infty$.
\end{example}

\begin{remark}
Many more particular examples of augmented Lagrangians for inequality constrained optimization problems can be found
in \cite{BirginCastilloMartinez}.
\end{remark}

\subsection{Augmented Lagrangians for problems with second order cone constraints}

Let us now consider nonlinear second order cone programming problems:
\begin{equation} \label{prob:SecondOrderCone}
  \min\: f(x) \quad \text{subject to} \quad g_i(x) \in \mathcal{K}_{\ell_i + 1}, \quad i \in I, \quad x \in Q,
\end{equation}
where $g_i \colon X \to \mathbb{R}^{\ell_i + 1}$, $i \in I := \{ 1, \ldots, m \}$, are given functions,
\[
  \mathcal{K}_{\ell_i + 1} = \Big\{ y = (y^0, \overline{y}) \in \mathbb{R} \times \mathbb{R}^{\ell_i} \Bigm|
  y^0 \ge \| \overline{y} \| \Big\}
\]
is the second order (Lorentz/ice cream) cone of dimension $\ell_i + 1$, and $\| \cdot \|$ is the Euclidean norm.

Problem \eqref{prob:SecondOrderCone} can be rewritten as the problem $(\mathcal{P})$ with
\[
  Y = \mathbb{R}^{\ell_1 + 1} \times \ldots \times \mathbb{R}^{\ell_m + 1}, \quad
  K = \mathcal{K}_{\ell_1 + 1} \times \ldots \times \mathcal{K}_{\ell_m + 1},
\]
and $G(\cdot) = (g_1(\cdot), \ldots, g_m(\cdot))$. Note that the dual space $Y^*$ can be identified with $Y$,
while the polar cone $K^*$ can be identified with 
$(- \mathcal{K}_{\ell_1 + 1}) \times \ldots \times (- \mathcal{K}_{\ell_m + 1})$.

\begin{example}[Hestenes-Powell-Rockafellar's augmented Lagrangian] \label{ex:HPR_AugmLagr_2OrderCone}
For any $y = (y_1, \ldots, y_m) \in Y$, $\lambda = (\lambda_1, \ldots, \lambda_m) \in Y^*$, and $c > 0$ define
\[
  \Phi(y, \lambda, c) = \frac{c}{2} \sum_{i = 1}^m 
  \left[ \dist^2 \left( y_i + \frac{1}{c} \lambda_i, \mathcal{K}_{\ell_i + 1} \right) 
  - \frac{1}{c^2} \| \lambda_i \|^2 \right].
\]
This function $\Phi$ is a particular case of the function $\Phi$ from Example~\ref{ex:RockafellarWetsAugmLagr} with
$\sigma(y) = (\| y_1 \|^2 + \ldots + \| y_m \|^2) / 2$ (see \cite{LiuZhang2007,LiuZhang2008,ZhouChen2015}). Therefore it
satisfies all basic assumptions with $\Lambda = Y^*$ (assumption $(A11)$ holds true with 
$\Phi_0(\lambda) \equiv \lambda$).
\end{example}

To define another augmented Lagrangian for optimization problems with nonlinear second order cone constraints, recall
(see \cite{FukushimaLuoTseng,SunSun2008}) that in the context of such problems \textit{L\"{o}wner's operator} 
associated with a function $\psi \colon \mathbb{R} \to \mathbb{R}$ is defined as follows:
\[
  \Psi(y) = \frac{1}{2} \begin{pmatrix}
    \psi(y^0 + \| y \|) + \psi(y^0 - \| y \|)
    \\
    \Big( \psi(y^0 + \| y \|) - \psi(y^0 - \| y \|) \Big) \frac{\overline{y}}{\| \overline{y} \|} 
  \end{pmatrix}
\]
for any $y \in (y^0, \overline{y}) \in \mathbb{R} \times \mathbb{R}^l$ with $\overline{y} \ne 0$, and 
$\Psi(y) = (\psi(y^0), 0)$ for $y = (y^0, 0)$ . One can readily verify that if $\psi(0) = 0$ and the function $\psi$ 
is strictly increasing, then $-\Psi(-y) \in \mathcal{K}_{\ell + 1}$ for any $y \in \mathcal{K}_{\ell + 1}$, while 
$-\Psi(-y) \notin \mathcal{K}_{\ell + 1}$ for any $y \notin \mathcal{K}_{\ell + 1}$.

\begin{example}[exponential-type augmented Lagrangian/modified barrier function] \label{ex:ExpTypeAugmLagr_2OrderCone}
Let $\psi \colon \mathbb{R} \to \mathbb{R} \cup \{ + \infty \}$ be a non-decreasing convex function such that
$\dom \psi = (- \infty, \varepsilon_0)$ for some $\varepsilon_0 \in (0, + \infty]$, $\psi(t) \to + \infty$
as $t \to \varepsilon_0$ in the case $\varepsilon_0 < + \infty$ and $\psi(t) / t \to + \infty$ as $t \to +\infty$
in the case $\varepsilon_0 = +\infty$. Suppose also that $\psi$ is twice differentiable on $\dom \psi$, $\psi(0) = 0$, 
and $\psi'(0) = 1$. For any $y = (y_1, \ldots, y_m) \in Y$ and $\lambda = (\lambda_1, \ldots, \lambda_m) \in Y^*$
define
\[
  \Phi(y, \lambda, c) = - \frac{1}{c} \sum_{i = 1}^m \big\langle \lambda_i, \Psi(-c y_i) \big\rangle,
\]
if $- y^0 + \| \overline{y} \| < \varepsilon_0 / c$, and $\Phi(y, \lambda, c) = + \infty$ otherwise. The corresponding
augmented Lagrangian, which can be viewed as an extension of augmented Lagrangian from
Examples~\ref{ex:ExpTypeAugmLagr} and \ref{ex:ModifiedBarrierFunction} to the case of nonlinear second order cone
programming problems, was introduced in \cite{ZhangGuXiao2011}.

Let $\Lambda = K^*$. Then assumptions $(A1)$--$(A7)$, $(A9)$--$(A11)$, and $(A15)$ hold true in the general case 
(assumption $(A11)$ is satisfied with $\Phi_0(\lambda) \equiv \lambda$ by \cite[Lemma~3.1]{ZhangGuXiao2011}). 
Assumptions $(A12)$ and $(A12)_s$ are satisfied if and only if $\varepsilon_0 < + \infty$. Restricted assumptions
$(A13)$, $(A13)_s$, $(A14)$, and $(A14)_s$ hold true if and only if $\psi(t)/t \to 0$ as $t \to - \infty$, while
non-restricted versions of these assumptions hold true if and only if $\psi$ is bounded below. Finally, assumptions
$(A8)$ and $(A9)_s$ are not satisfied for the function $\Phi$ from this example.
\end{example}

\subsection{Augmented Lagrangians for problems with semidefinite constraints}

Let us now consider nonlinear semidefinite programming problems of the form:
\begin{equation} \label{prob:Semidefinite}
  \min\: f(x) \quad \text{subject to} \quad G(x) \preceq \mathbb{O}_{\ell \times \ell}, \quad x \in Q,
\end{equation}
where $G \colon X \to \mathbb{S}^{\ell}$ is a given function, $\mathbb{S}^{\ell}$ is the space of all real symmetric
matrices of order $\ell$ endowed with the inner product $\langle A, B \rangle = \trace(A B)$ and the corresponding norm
$\| A \|_F = \sqrt{\trace(A^2)}$, $A, B \in \mathbb{S}^{\ell}$, which is called the Frobenius norm, $\trace(\cdot)$ is 
the trace operator, $\mathbb{O}_{\ell \times \ell}$ is the zero matrix of order $\ell \times \ell$, and $\preceq$ is 
the L\"{o}wner partial order on the space $\mathbb{S}^{\ell}$, that is, $A \preceq B$ for some 
$A, B \in \mathbb{S}^{\ell}$ if and only if the matrix $B - A$ is positive semidefinite.

Problem \eqref{prob:Semidefinite} can be written as the problem $(\mathcal{P})$ with $Y = \mathbb{S}^{\ell}$ and $K$
being the cone of negative semidefinite matrices $\mathbb{S}^{\ell}_-$. Note that the binary relation induced by 
the cone $-K$ coincides with the L\"{o}wner partial order. The dual space $Y^*$ in this case can be identified with
$\mathbb{S}^{\ell}$, while the polar cone $K^*$ can be identified with the cone of positive semidefinite matrices
$\mathbb{S}^{\ell}_+$.

\begin{example}[Hestenes-Powell-Rockafellar's augmented Lagrangian] \label{ex:HPR_AugmLagr_SemiDef}
For any $y, \lambda \in \mathbb{S}^{\ell}$ and $c > 0$ define
\[
  \Phi(y, \lambda, c) = \frac{1}{2c} \Big( \trace\big([c y + \lambda]_+^2 \big) - \trace(\lambda^2) \Big),
\]
where $[ \cdot ]_+$ is the projection of a matrix onto the cone $\mathbb{S}_+^{\ell}$. This function $\Phi$ is a
particular case of the function $\Phi$ from Example~\ref{ex:RockafellarWetsAugmLagr} with 
$\sigma(y) = \| y \|_F^2 / 2$ and, therefore, it satisfies all basic assumptions with $\Lambda = Y^*$ (assumption
$(A11)$ holds true with $\Phi_0(\lambda) \equiv \lambda$). The corresponding augmented Lagrangian and optimization
methods for nonlinear semidefinite programming problems utilising this augmented Lagrangian were studied in
\cite{HuangYangTeo_Chapter,SunZhangWu2006,SunSunZhang2008,ZhaoSunToh2010,WenGoldfarbYin2010,Sun2011,LuoWuChen2012,
WuLuoDingChen2013,WuLuoYang2014,YamashitaYabe2015}.
\end{example}

One can also extend the exponential-type augmented Lagrangian/modified barrier function for inequality constrained
problems to the case of nonlinear semidefinite programming problems. To define such extension, recall that 
\textit{the matrix function}/\textit{L\"{o}wner's operator} \cite{HornJohnson,SunSun2008} associated with a function 
$\psi \colon \mathbb{R} \to \mathbb{R}$ is defined as follows:
\[
  \Psi(y) = E \diag\Big( \psi(\sigma_1(y)), \ldots, \psi(\sigma_{\ell}(y)) \Big) E^T \quad \forall y \in Y,
\]
where $y = E \diag(\sigma_1(y), \ldots, \sigma_{\ell}(y)) E^T$ is a spectral decomposition of a matrix 
$y \in \mathbb{S}^{\ell}$, while $\sigma_1(y), \ldots, \sigma_{\ell}(y)$ are the eigenvalue of $y$ listed in 
the decreasing order. Note that if the function $\psi$ is non-decreasing and $\psi(0) = 0$, then 
$\Psi(y) \in \mathbb{S}^{\ell}_-$ for any $y \in \mathbb{S}_-^{\ell}$.

\begin{example}[exponential-type augmented Lagrangian/modified barrier function] \label{ex:ExpTypeAugmLagr_SemiDef}
Let a function $\psi \colon \mathbb{R} \to \mathbb{R} \cup \{ + \infty \}$ be as in 
Example~\ref{ex:ExpTypeAugmLagr_2OrderCone}. For any $y, \lambda \in \mathbb{S}^{\ell}$ and $c > 0$ define
\[
  \Phi(y, \lambda, c) = \frac{1}{c} \big\langle \lambda, \Psi(c y) \big\rangle,
\]
if $c \sigma_1(y) < \varepsilon_0$, and $\Phi(y, \lambda, c) = + \infty$ otherwise. The corresponding augmented
Lagrangian $\mathscr{L}(\cdot)$ is an extension of augmented Lagrangians for inequality constrained 
optimization problems from  Examples~\ref{ex:ExpTypeAugmLagr} and \ref{ex:ModifiedBarrierFunction}. It was studied in
details in \cite{Stingl2006,Noll2007,LiZhang2009,ZhangLiWu2014,LuoWuLiu2015}.

Let $\Lambda = K^* = \mathbb{S}_+^{\ell}$. Then assumptions $(A1)$--$(A7)$, $(A9)$--$(A11)$, and $(A15)$ hold true in 
the general case (assumption $(A11)$ is satisfied with $\Phi_0(\lambda) \equiv \lambda$ by
\cite[Proposition~4.2]{LuoWuLiu2015}). Assumption $(A8)$ is satisfied, if the matrix function $\Psi(\cdot)$ is monotone
and convex (see, e.g. \cite{HornJohnson}). Assumptions $(A12)$ and $(A12)_s$ are satisfied if and only if 
$\varepsilon_0 < + \infty$. Restricted assumptions $(A13)$, $(A13)_s$, $(A14)$, and $(A14)_s$ hold true if and only if
$\psi(t)/t \to 0$ as $t \to - \infty$, while non-restricted versions of these assumptions hold true if and only if
$\psi$ is bounded below. Finally, assumption $(A9)_s$ is not satisfied for the function $\Phi$ from this example.
\end{example}

\begin{example}[penalized exponential-type augmented Lagrangian] \label{ex:PenalizedExpTypeAugmLagr_SemiDef}
Let a function $\psi \colon \mathbb{R} \to \mathbb{R}$ be a twice continuously differentiable non-decreasing convex 
function such that $\psi(t)/t \to + \infty$ as $t \to + \infty$, $\psi(0) = 0$ and $\psi'(0) = 1$. Let also 
$\xi \colon \mathbb{R} \to \mathbb{R}$ be a twice continuously differentiable non-decreasing convex function such that
$\xi(t) = 0$ for all $t \le 0$ and $\xi(t) > 0$ for all $t > 0$. Denote by $\Xi(\cdot)$ the L\"{o}wner's operator
associated with $\xi(\cdot)$. For any $y, \lambda \in \mathbb{S}^{\ell}$ and $c > 0$ define
\[
  \Phi(y, \lambda, c) = \frac{1}{c} \big\langle \lambda, \Psi(c y) \big\rangle
  + \frac{1}{c} \trace\Big( \Xi(c y) \Big).
\]
The corresponding augmented Lagrangian $\mathscr{L}(\cdot)$ was introduced in \cite{LuoWuLiu2015} and is an
extrension of the penalized exponential-type augmented Lagrangian from Example~\ref{ex:PenalizedExpTypeAugmLagr} to 
the case of nonlinear semidefinite programming problems.

Let $\Lambda = K^* = \mathbb{S}_+^{\ell}$. Then assumptions $(A1)$--$(A7)$, $(A9)$--$(A11)$, and $(A15)$ hold true 
in the general case (assumption $(A11)$ is satisfied with $\Phi_0(\lambda) \equiv \lambda$ by 
\cite[Proposition~4.2]{LuoWuLiu2015}). Assumption $(A8)$ is satisfied, provided the matrix functions $\Psi(\cdot)$ and 
$\Xi(\cdot)$ are monotone and convex. Assumptions $(A12)$ and $(A12)_s$ are satisfied, if $\xi(t) / t \to + \infty$ as 
$t \to +\infty$. Restricted assumptions $(A13)$, $(A13)_s$, $(A14)$, and $(A14)_s$ hold true if and only if 
$\psi(t)/t \to 0$ as $t \to - \infty$, while non-restricted versions of these assumptions hold true if and only if
$\psi$ is bounded below. Finally, assumption $(A9)_s$ is not satisfied, regardless of the choice of $\psi$ and $\xi$.
\end{example}

\subsection{Augmented Lagrangians for problems with pointwise inequality constraints}

Let $(T, \mathfrak{A}, \mu)$ be a measure space and $X$ be some space of functions $x \colon T \to \mathbb{R}^m$.
For example, $X$ can be defined as $L^p(T, \mathfrak{A}, \mu)$ or as the Sobolev space, when $T$ is an open subset
of $\mathbb{R}^d$. Let us consider problems with pointwise inequality constraints of the form:
\begin{equation} \label{prob:PointwiseConstr}
  \min\: f(x) \quad \text{subject to} \quad g(x(t), t) \le 0 \enspace \text{for a.e. } t \in T, \quad x \in Q,
\end{equation}
where $g \colon X \times T \to \mathbb{R}$ is a given function such that
$g(x(\cdot), \cdot) \in L^p(T, \mathfrak{A}, \mu)$ for some fixed $p \in [1, + \infty)$ and all $x \in X$.

One can rewrite problem \eqref{prob:PointwiseConstr} as the problem $(\mathcal{P})$ with 
$Y = L^p(T, \mathfrak{A}, \mu)$, $K$ being the cone of nonpositive function $L^p_-(T, \mathfrak{A}, \mu)$,
and $G(x)(\cdot) = g(x(\cdot), \cdot)$ for all $x \in X$. In this case the dual space $Y^*$ can be identified
with $L^q(T, \mathfrak{A}, \mu)$, where $q \in (1, + \infty]$ is the conjugate exponent of $p$, that is,
$1 / p + 1 / q = 1$ ($q = + \infty$, if $p = 1$). In turn, the polar cone $K^*$ can be identified with 
the cone of nonnegative functions $L^q_+(T, \mathfrak{A}, \mu)$.

For the sake of shortness we will consider only an augmented Lagrangian for problem \eqref{prob:PointwiseConstr} 
based on the Hestenes-Powell-Rockafellar augmented Lagrangian. However, it should be mentioned that one can 
define an augmented Lagrangian for this problem based on any other augmented Lagrangian for inequality 
constrained optimization problems.

\begin{example}[Hestenes-Powell-Rockafellar augmented Lagrangian] \label{ex:HPR_AugmLagr_Pointwise}
Suppose that either $p = 2$ or $p \ge 2$ and the measure $\mu$ is finite. For any 
$y \in Y := L^p(T, \mathfrak{A}, \mu)$, $\lambda \in Y^* \cong L^q(T, \mathfrak{A}, \mu)$, and $c > 0$ define
\[
  \Phi(y, \lambda, c) = \int_T \left( \lambda(t) \max\left\{ y(t), - \frac{\lambda(t)}{c} \right\}
  + \frac{c}{2} \max\left\{ y(t), - \frac{\lambda(t)}{c} \right\}^2 \right) d \mu(t).
\]
Observe that
\begin{multline*}
  \left| \lambda(t) \max\left\{ y(t), - \frac{\lambda(t)}{c} \right\}
  + \frac{c}{2} \max\left\{ y(t), - \frac{\lambda(t)}{c} \right\}^2 \right| 
  \\
  \le \frac{1 + c}{2} |y(t)|^2 + \left( \frac{1}{2} + \frac{1}{2c} \right) |\lambda(t)|^2.
\end{multline*}
Therefore, the value $\Phi(y, \lambda, c)$ is correctly defined and finite for any $y \in Y$, $\lambda \in Y^*$,
and $c > 0$, if $p = 2$ or $p \ge 2$ and the measure $\mu$ is finite.

Let $\Lambda = Y^*$. Then one can readily verify that all basic assumptions hold true in the general case, except 
for assumptions $(A12)$ and $(A12)_s$. Assumptions $(A12)$ and $(A12)_s$ are satisfied in the case $p = 2$, since
\[
  \Phi(y, \lambda, c) - \Phi(y, \lambda, c_0) \ge (c - c_0) \int_T \max\{ 0, y(t) \}^2 d \mu(t)
  = (c - c_0) \dist(y, K)^2
\]
for all $y \in Y$, $\lambda \in \Lambda$, and $c \ge c_0 > 0$ (see the proof of the validity of assumptions $(A12)$ and
$(A12)_s$ for the Rockafellar-Wets' augmented Lagrangian).
\end{example}

\subsection{Some comments on particular augmented Lagrangians}

Before we proceed to the analysis of the augmented dual problem and primal-dual augmented Lagrangian methods, let us
make a few general observations about the presented examples:
\begin{enumerate}
\item{All basic assumptions, except for assumption $(A9)_s$, are satisfied for all particular augmented Lagrangians 
presented above (under appropriate additional assumptions), except for the exponential-type augmented Lagrangian 
(Examples~\ref{ex:ExpTypeAugmLagr}, \ref{ex:ExpTypeAugmLagr_2OrderCone}, and \ref{ex:ExpTypeAugmLagr_SemiDef}), 
the p-th power augmented Lagrangian (Example~\ref{ex:pthPowerAugmLagr}), the hyperbolic-type augmented Lagrangian
(Example~\ref{eq:HyperbolicAugmLagr}), and He-Wu-Meng's augmented Lagrangian (Example~\ref{ex:HeWuMeng}). 
The exponential-type augmented Lagrangian (the case $\varepsilon_0 = + \infty$ in 
Examples~\ref{ex:ExpTypeAugmLagr_2OrderCone} and \ref{ex:ExpTypeAugmLagr_SemiDef}) and the p-th power augmented
Lagrangian do not satisfy assumptions $(A12)$ and $(A12)_s$, the hyperbolic-type augmented Lagrangian does not satisfy
assumptions $(A9)$, $(A12)$, and $(A12)_s$, while He-Wu-Meng's augmented Lagrangian does not satisfy assumption $(A9)$
and non-restricted versions of assumptions $(A13)$, $(A13)_s$, $(A14)$, and $(A14)_s$.
} 

\item{Assumption $(A9)_s$ is satisfied only for the Hestenes-Powell-Rockafellar augmented Lagrangian 
(Examples~\ref{ex:HestenesPowellAugmLagr}, \ref{ex:HPR_AugmLagr}, \ref{ex:HPR_AugmLagr_2OrderCone}, 
\ref{ex:HPR_AugmLagr_SemiDef}, and \ref{ex:HPR_AugmLagr_Pointwise}) and its generalization, Rockafellar-Wets' augmented
Lagrangian (Example~\ref{ex:RockafellarWetsAugmLagr}).
}

\item{For assumptions $(A1)$, $(A6)$--$(A8)$, $(A12)$--$(A15)$, and $(A12)_s$--$(A14)_s$ to be satisfied 
for the exponential-type augmented Lagrangian (Examples~\ref{ex:ExpTypeAugmLagr}, \ref{ex:ExpTypeAugmLagr_2OrderCone}, 
and \ref{ex:ExpTypeAugmLagr_SemiDef}), the penalized exponential-type augmented Lagrangian 
(Examples~\ref{ex:PenalizedExpTypeAugmLagr} and \ref{ex:PenalizedExpTypeAugmLagr_SemiDef}), the modified  barrier 
function (Example~\ref{ex:ModifiedBarrierFunction}), and the p-th power augmented Lagrangian 
(Example~\ref{ex:pthPowerAugmLagr}), and the hyperbolic-type augmented Lagrangian (Example~\ref{eq:HyperbolicAugmLagr})
it is \textit{necessary} that $\Lambda \subseteq K^*$. In contrast, for all other paritcular augmented Lagrangians
presented in this section these assumptions are satisfied for $\Lambda = Y^*$ (in the case of the He-Wu-Meng's augmented
Lagrangian only the restricted versions of assumptions $(A13)$, $(A13)_s$, $(A14)$, and $(A14)_s$ are satisfied for
$\Lambda = Y^*$). 
}

\item{Our theory of augmented Lagrangians encompasses penal\-ty functions of the form 
$F_c(\cdot) = f(\cdot) + c \dist(G(\cdot), K)^{\alpha}$ with $\alpha > 0$. One simply needs to define 
$\Phi(y, \lambda, c) := c \dist(y, K)^{\alpha}$. This function $\Phi$ satisfied assumptions $(A1)$, $(A2)$, $(A4)$,
$(A7)$, $(A9)$, $(A9)_s$, $(A10)$, $(A12)$--$(A15)$ and $(A12)_s$--$(A14)_s$ for any choice of the set $\Lambda$
(assumption $(A8)$ is satisfied, if $\alpha \ge 1$, while assumption $(A6)$ is satisfied, if $\Lambda \subseteq K^*$),
which means that the main results of this paper on the zero duality gap and convergence of augmented Lagrangian methods
can be applied to the penalty function $F_c(\cdot)$.
}
\end{enumerate}

\section{Duality theory}
\label{sect:DualityTheory}

One of the central concepts of the theory of augmented Lagrangians and corresponding optimization methods is 
\textit{the (augmented) dual problem}:
\[
  \max_{(\lambda, c)} \: \Theta(\lambda, c) \quad \text{subject to} \quad \lambda \in \Lambda, \enspace c > 0,
  \qquad \eqno{(\mathcal{D})}
\]
where 
\begin{equation} \label{eq:DualFunction}
  \Theta(\lambda, c) := \inf_{x \in Q} \mathscr{L}(x, \lambda, c) \quad \forall \lambda \in \Lambda, \: c > 0
\end{equation}
is \textit{the (augmented) dual function}. As is well-known and will be discussed in details below, convergence of 
augmented Lagrangian methods is interlinked with various properties of the dual problem. Therefore, before turning to
augmented Lagrangian methods, we need to analyse how standard duality results are translated into our axiomatic 
augmented Lagrangian setting.

\begin{remark} \label{rmrk:DualFunc_Concave_usc}
Note that if assumption $(A9)_s$ is satisfied, then the dual function $\Theta$ is concave and the augmented dual problem
$(\mathcal{D})$ is a concave optimization problem, even if the original problem $(\mathcal{P})$ is nonconvex. 
Furthermore, assumption $(A10)$ ensures that the dual function is upper semicontinuous, as the infimum of the family 
$\{ \mathscr{L}(x, \cdot) \}$, $x \in Q$, of upper semicontinuous functions.
\end{remark}

\subsection{Zero duality gap property}

Let us first study how optimal values of the problems $(\mathcal{P})$ and $(\mathcal{D})$ relate to each other. 
We start by showing that under an essentially nonrestrictive assumption the optimal value of the augmented dual problem
does not exceed the optimal value of the primal problem.

\begin{proposition}[weak duality] \label{prp:WeakDuality}
Let assumption $(A1)$ hold true. Then 
\[
  \Theta(\lambda, c) \le f(x) \quad \forall x \in \Omega, \: \lambda \in \Lambda, \: c > 0,
\]
where $\Omega$ is the feasible region of the problem $(\mathcal{P})$. In pacritular, $\Theta_* \le f_*$, where
$\Theta_*$ is the optimal value of the problem $(\mathcal{D})$ and $f_*$ is the optimal value of the problem
$(\mathcal{P})$.
\end{proposition}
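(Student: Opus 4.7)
The plan is to unwind the definitions and use assumption $(A1)$ directly; this is a classical weak-duality argument adapted to the axiomatic setting.

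First I would fix an arbitrary feasible point $x \in \Omega$, together with arbitrary $\lambda \in \Lambda$ and $c > 0$. By feasibility, $G(x) \in K$, so assumption $(A1)$ yields $\Phi(G(x), \lambda, c) \le 0$. In particular this quantity is not $-\infty$, so the augmented Lagrangian is given by the non-truncated formula $\mathscr{L}(x, \lambda, c) = f(x) + \Phi(G(x), \lambda, c)$, and consequently $\mathscr{L}(x, \lambda, c) \le f(x)$.

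Next, since $\Omega \subseteq Q$, the point $x$ is admissible in the infimum defining the dual function in \eqref{eq:DualFunction}, and hence
\[
  \Theta(\lambda, c) = \inf_{x' \in Q} \mathscr{L}(x', \lambda, c) \le \mathscr{L}(x, \lambda, c) \le f(x).
\]
This is the first claim of the proposition. To obtain the ``in particular'' part, I would take the infimum over $x \in \Omega$ on the right to get $\Theta(\lambda, c) \le f_*$, and then the supremum over $(\lambda, c) \in \Lambda \times (0, +\infty)$ on the left to conclude $\Theta_* \le f_*$.

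There is no real obstacle here: the entire content is the sign inequality supplied by $(A1)$, which is exactly the axiom encoding that non-positivity of $\Phi$ on feasible inputs which makes the abstract function $\Phi$ behave like a penalty term. The only thing worth being a little careful about is the handling of the value $-\infty$ in the definition of $\mathscr{L}$, but $(A1)$ guarantees $\Phi(G(x), \lambda, c) \le 0$ (hence finite from above, and in particular greater than $-\infty$ under the implicit interpretation of the axiom), so the standard formula applies and the chain of inequalities goes through.
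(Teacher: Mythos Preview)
Your proof is correct and follows essentially the same approach as the paper: use $(A1)$ to get $\mathscr{L}(x,\lambda,c)\le f(x)$ for feasible $x$, then use $\Omega\subseteq Q$ and the definition of $\Theta$. The only minor quibble is that $(A1)$ does not strictly rule out $\Phi(G(x),\lambda,c)=-\infty$, but in that case $\mathscr{L}(x,\lambda,c)=-\infty\le f(x)$ anyway, so the chain of inequalities still goes through.
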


\begin{proof}
By assumption $(A1)$ for any point $x \in \Omega$ and all $\lambda \in \Lambda$ and $c > 0$ one has 
$\mathscr{L}(x, \lambda, c) \le f(x)$. Hence applying the definition of $\Theta$ (see \eqref{eq:DualFunction}) and
the fact that $\Omega \subseteq Q$ one obtains the required result.
\end{proof}

As is well-known, the optimal values of the primal and dual problems might not coincide, especially for nonconvex 
problems. In this case $\Theta_* < f_*$ and the quantity $f_* - \Theta_* > 0$ is called \textit{the duality gap}. 

\begin{definition}
One says that there is \textit{zero duality gap} between the primal problem $(\mathcal{P})$ and the dual problem 
$(\mathcal{D})$ (or that the augmented Lagrangian $\mathscr{L}(\cdot)$ has \textit{the zero duality gap property}, or
that \textit{the strong duality} with respect to the augmented Lagrangian $\mathscr{L}(\cdot)$ holds true), if 
$\Theta_* = f_*$. 
\end{definition}

Our aim now is to understand what kind of assumptions one must impose on the function $\Phi$ to ensure that the
corresponding augmented Lagrangian $\mathscr{L}(x, \lambda, c) := f(x) + \Phi(G(x), \lambda, c)$ has the zero duality
gap property. To this end, we extend the standard result (see, e.g. \cite{RubinovHuangYang2002}) connecting the optimal
value of the dual problem with the behaviour of \textit{the optimal value (perturbation) function}
\[
  \beta(p) = \inf\big\{ f(x) \bigm| x \in Q \colon G(x) - p \in K \big\} \quad \forall p \in Y
\]
of the problem $(\mathcal{P})$ to our case. Denote by $\dom_{\lambda} \Theta$ the effective domain of $\Theta$
in $\lambda$, that is, 
$\dom_{\lambda} \Theta = \{ \lambda \in \Lambda \mid \exists c > 0 \colon \Theta(\lambda, c) > - \infty \}$.
Note that $\lambda \in \dom_{\lambda} \Theta$ if and only if the function $\mathscr{L}(\cdot, \lambda, c)$ is bounded
below on $Q$ for some $c > 0$. 

\begin{theorem}[optimal dual value formula] \label{thrm:DualOptVal_vs_OptValFunc}
Let assumptions $(A1)$, $(A7)$, and $(A12)$--$(A14)$ hold true. Then
\[
  \Theta_* := \sup_{\lambda \in \Lambda, c > 0} \Theta(\lambda, c) =
  \begin{cases}
    - \infty, & \text{if } \dom_{\lambda} \Theta = \emptyset,
    \\
    \min\left\{ f_*, \liminf_{p \to 0} \beta(p) \right\}, & \text{if } \dom_{\lambda} \Theta \ne \emptyset.
  \end{cases}
\]
In addition, $\Theta_* = \lim\limits_{c \to + \infty} \Theta(\lambda, c)$ for all $\lambda \in \dom_{\lambda} \Theta$.
\end{theorem}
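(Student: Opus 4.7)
The plan is to first dispatch the trivial case $\dom_{\lambda}\Theta = \emptyset$, in which $\Theta(\lambda,c)$ is identically $-\infty$ by definition of $\dom_{\lambda}\Theta$, so $\Theta_* = -\infty$. Set $L := \liminf_{p\to 0}\beta(p)$. By $(A7)$, $c\mapsto \Phi(G(x),\lambda,c)$ is non-decreasing, hence so are $c\mapsto \mathscr{L}(x,\lambda,c)$ and $c\mapsto \Theta(\lambda,c)$; the limit $\lim_{c\to +\infty}\Theta(\lambda,c)$ therefore exists in $[-\infty,+\infty]$. I would reduce both remaining claims to the single identity
\[
  \lim_{c\to +\infty}\Theta(\lambda,c) \;=\; \min\{f_*,L\} \qquad \forall \lambda\in\dom_{\lambda}\Theta,
\]
since then $\Theta_* = \sup_{\lambda\in\Lambda,\,c>0}\Theta(\lambda,c) = \sup_{\lambda\in\dom_{\lambda}\Theta}\lim_c \Theta(\lambda,c) = \min\{f_*,L\}$, and the additional assertion is immediate.

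For the upper bound, weak duality (Proposition~\ref{prp:WeakDuality}) immediately gives $\Theta(\lambda,c)\le f_*$. To also prove $\lim_c\Theta(\lambda,c)\le L$ (vacuous when $L=+\infty$), I would fix $c_n\to+\infty$ and invoke $(A14)$ to produce $t_n\to 0$ such that $\Phi(y_n,\lambda,c_n)\to 0$ whenever $\dist(y_n,K)\le t_n$. Using the definition of $\liminf_{p\to 0}$, choose $p_n\in Y$ with $\|p_n\|\le t_n$ and $\beta(p_n)\to L$ (or $\to-\infty$ when $L=-\infty$), and then $x_n\in Q$ with $G(x_n)-p_n\in K$ and $f(x_n)\le \beta(p_n)+1/n$. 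Since $G(x_n)-p_n\in K$, we have $\dist(G(x_n),K)\le \|p_n\|\le t_n$, so $(A14)$ yields $\Phi(G(x_n),\lambda,c_n)\to 0$, whence
\[
  \Theta(\lambda,c_n) \;\le\; \mathscr{L}(x_n,\lambda,c_n) \;=\; f(x_n)+\Phi(G(x_n),\lambda,c_n) \;\to\; L.
\]

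For the lower bound, fix $\lambda\in\dom_{\lambda}\Theta$ and some $c_0>0$ with $\Theta(\lambda,c_0)>-\infty$. Take any $\alpha<\min\{f_*,L\}$ and suppose for contradiction that $\Theta(\lambda,c_n)<\alpha$ along some $c_n\to+\infty$ (WLOG $c_n\ge c_0$); pick $x_n\in Q$ with $\mathscr{L}(x_n,\lambda,c_n)<\alpha$. The finiteness of $\mathscr{L}(x_n,\lambda,c_n)$ together with $\Theta(\lambda,c_0)>-\infty$ and $(A7)$ forces $\Phi(G(x_n),\lambda,c_0)$ to be finite. I then split on $\dist(G(x_n),K)$. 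If, along a subsequence, $\dist(G(x_n),K)\ge r>0$, then choosing $\alpha'>\alpha-\Theta(\lambda,c_0)$ and applying $(A12)$ gives $\Phi(G(x_n),\lambda,c_n)-\Phi(G(x_n),\lambda,c_0)\ge \alpha'$ for all large $n$, so $\mathscr{L}(x_n,\lambda,c_0)<\alpha-\alpha'<\Theta(\lambda,c_0)$, contradicting $\mathscr{L}(x_n,\lambda,c_0)\ge \Theta(\lambda,c_0)$. Otherwise $\dist(G(x_n),K)\to 0$ along a subsequence; choose $z_n\in K$ with $\|G(x_n)-z_n\|\to 0$ and set $p_n := G(x_n)-z_n\to 0$. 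Since $G(x_n)-p_n=z_n\in K$, $x_n$ is feasible for the perturbed problem, so $f(x_n)\ge \beta(p_n)$, giving $\beta(p_n)+\Phi(G(x_n),\lambda,c_n)<\alpha$. Now $(A13)$ yields $\liminf_n \Phi(G(x_n),\lambda,c_n)\ge 0$, so $\limsup_n \beta(p_n)\le \alpha$, while $\liminf_n \beta(p_n)\ge \liminf_{p\to 0}\beta(p)=L>\alpha$, a contradiction.

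The main obstacle is coordinating the three limiting assumptions $(A12)$, $(A13)$, $(A14)$, each of which controls $\Phi$ in a different geometric regime of $y$ relative to $K$: bounded away from $K$, approaching $K$, and approaching $K$ fast enough, respectively. The Case~A argument in the lower bound critically relies on $\Phi(G(x_n),\lambda,c_0)$ being finite, which is precisely why the hypotheses $\lambda\in\dom_{\lambda}\Theta$ and $(A7)$ enter; choosing the right $\alpha'$ against the background value $\Theta(\lambda,c_0)$ is the key bookkeeping step. On the upper bound side, the delicate point is threading the needle by selecting $p_n$ simultaneously satisfying $\|p_n\|\le t_n$ (to trigger $(A14)$) and $\beta(p_n)\to L$; once this coordination is arranged, everything else is routine.
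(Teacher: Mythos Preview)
Your proof is correct and follows essentially the same approach as the paper: reduce via monotonicity in $c$ to computing $\lim_{c\to\infty}\Theta(\lambda,c)$ for $\lambda\in\dom_\lambda\Theta$, use $(A14)$ together with a sequence $p_n\to 0$ realizing $\liminf_{p\to 0}\beta(p)$ for the upper bound, and use $(A12)$ (for the ``far from $K$'' regime) together with $(A13)$ (for the ``approaching $K$'' regime) for the lower bound. The only organizational difference is that the paper argues the lower bound directly by picking $\varepsilon_n$-minimizers $x_n$ and showing their limit is at least $\min\{f_*,L\}$, whereas you phrase it as a contradiction against a threshold $\alpha<\min\{f_*,L\}$; this is cosmetic. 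One small point of polish: in your Case~B you write $\liminf_n\beta(p_n)\ge L$, which is strictly speaking only guaranteed when $p_n\ne 0$; but since $\beta(0)=f_*>\alpha$ as well, the contradiction with $\limsup_n\beta(p_n)\le\alpha$ goes through regardless.
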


\begin{proof}
Note that $\Theta(\lambda, c) = - \infty$ for all $\lambda \in \Lambda$ and $c > 0$, and $\Theta_* = - \infty$, if
$\dom_{\lambda} \Theta = \emptyset$. Therefore, below we can suppose that $\dom_{\lambda} \Theta \ne \emptyset$.

By assumption $(A7)$ the function $\Phi(y, \lambda, c)$ is non-decreasing in $c$. Therefore the functions 
$\mathscr{L}(x, \lambda, c)$ and $\Theta(\lambda, c)$ are non-decreasing in $c$ for all $x \in X$ and 
$\lambda \in \Lambda$. Hence, as is easy to see, one has
\[
  \sup_{\lambda \in \Lambda, c > 0} \Theta(\lambda, c) 
  = \sup_{\lambda \in \Lambda} \sup_{c > 0} \Theta(\lambda, c)
  = \sup_{\lambda \in \Lambda} \lim_{c \to +\infty} \Theta(\lambda, c)
  = \sup_{\lambda \in \dom_{\lambda} \Theta} \lim_{c \to +\infty} \Theta(\lambda, c).
\]
Consequently, it is sufficient to check that
\begin{equation} \label{eq:DualFuncLim_vs_PerturbFunc}
  \Theta_*(\lambda) := \lim\limits_{c \to + \infty} \Theta(\lambda, c) 
  = \min\left\{ f_*, \liminf_{p \to 0} \beta(p) \right\} 
  \quad \forall \lambda \in \dom_{\lambda} \Theta.
\end{equation}
Let us prove this equality.

Fix any $\lambda \in \dom_{\lambda} \Theta$ and any unbounded strictly increasing sequence 
$\{ c_n \} \subset (0, + \infty)$ such that the function $\mathscr{L}(\cdot, \lambda, c_0)$ is bounded below on $Q$
(such $c_0$ exists by the definition of $\dom_{\lambda} \Theta$). Then by Proposition~\ref{prp:WeakDuality} one has
\begin{equation} \label{eq:DualFuncIncreasLim}
\begin{split}
  f_* \ge \Theta_*(\lambda) \ge &\Theta(\lambda, c_n) \ge \Theta(\lambda, c_0) 
  > - \infty \quad \forall n \in \mathbb{N}, \quad 
  \\ 
  &\lim_{n \to \infty} \Theta(\lambda, c_n) = \Theta_*(\lambda).
\end{split}
\end{equation}
Let $\{ x_n \} \subset Q$ be a sequence such that 
$\mathscr{L}(x_n, \lambda, c_n) \le \Theta(\lambda, c_n) + 1/(n + 1)$ for all $n \in \mathbb{N}$. Observe that from
\eqref{eq:DualFuncIncreasLim} it follows that 
\begin{equation} \label{eq:LimMinimizersDualFunc}
  \lim_{n \to \infty} \mathscr{L}(x_n, \lambda, c_n) = \Theta_*(\lambda) \le f_*.
\end{equation}
Note also that due to assumption $(A7)$ for all $r > 0$, $n \in \mathbb{N}$, and $x \in Q$ such that 
$\dist(G(x), K) \ge r$ one has $\mathscr{L}(x, \lambda, c_n) = + \infty$, if $\Phi(G(x), \lambda, c_0) = + \infty$, and
\begin{multline*}
  \mathscr{L}(x, \lambda, c_n) = \mathscr{L}(x, \lambda, c_0) + \Phi(G(x), \lambda, c_n) - \Phi(G(x), \lambda, c_0)
  \ge \Theta(\lambda, c_0) 
  \\
  + \inf\Big\{ \Phi(y, \lambda, c_n) - \Phi(y, \lambda, c_0) 
  \Bigm| y \in Y, \: \dist(y, K) \ge r, \: |\Phi(y, \lambda, c_0)| < + \infty \Big\},
\end{multline*}
if $\Phi(G(x), \lambda, c_0) < + \infty$ (note that $\Phi(G(x), \lambda, c_0) > - \infty$, since 
$\lambda \in \dom_{\lambda} \Theta$). Therefore, by assumption $(A12)$ for any $r > 0$ one has 
$\mathscr{L}(x, \lambda, c_n) \to + \infty$ as $n \to \infty$ uniformly on the set 
$\{ x \in Q \mid \dist(G(x), K) \ge r \}$, which with the use of \eqref{eq:LimMinimizersDualFunc} implies that
$\dist(G(x_n), K) \to 0$ as $n \to \infty$. Let us consider two cases.

\textbf{Case I.} Suppose that there exists a subsequence $\{ x_{n_k} \}$ that is feasible for the problem
$(\mathcal{P})$. Then with the use of Lemma~\ref{lem:Assumpt(A16)} one gets
\begin{align*}
  \lim_{k \to \infty} \mathscr{L}(x_{n_k}, \lambda, c_{n_k}) 
  &\ge \liminf_{k \to \infty} \Big( f(x_{n_k}) + \inf_{y \in K} \Phi(y, \lambda, c_{n_k}) \Big) 
  \\
  &\ge \liminf_{k \to \infty} f(x_{n_k}) \ge f_*.
\end{align*}

\textbf{Case II.} Suppose now that $G(x_{n_k}) \notin K$ for some subsequence $\{ x_{n_k} \}$ Then with the use of
assumption $(A13)$ one gets
\[
  \lim_{k \to \infty} \mathscr{L}(x_{n_k}, \lambda, c_{n_k}) \ge 
  \liminf_{k \to \infty} f(x_{n_k}) \ge \liminf_{k \to \infty} \beta(p_k) \ge \liminf_{p \to 0} \beta(p),
\]
where $\{ p_k \} \subset Y$ is any sequence such that $G(x_{n_k}) - p_k \in K$ and $\| p_k \| \to 0$ as $k \to \infty$ 
(note that such sequence exists, since $\dist(G(x_n), K) \to 0$ as $n \to \infty$).

Combining the two cases and inequalities \eqref{eq:DualFuncIncreasLim} and \eqref{eq:LimMinimizersDualFunc} one obtains
that
\begin{equation} \label{eq:OptValueLowerEstimate}
  f_* \ge \Theta_*(\lambda) \ge \min\{ f_*, \liminf_{p \to 0} \beta(p) \}.
\end{equation}
To prove equality \eqref{eq:DualFuncLim_vs_PerturbFunc}, suppose that $f_* > \liminf_{p \to 0} \beta(p) =: \beta_*$. 

Let $\{ p_k \} \subset Y$ be any sequence such that $p_k \to 0$ and $\beta(p_k) \to \beta_*$ as $k \to \infty$. Let
also $\{ t_n \}$ be the sequence from assumption $(A14)$. Clearly, there exists a subsequence $\{ p_{k_n} \}$ such that
$\| p_{k_n} \| \le t_n$ for all $n \in \mathbb{N}$. By the definition of the optimal value function $\beta$ for any 
$n \in \mathbb{N}$ one can find $x_n \in Q$ such that $G(x_n) - p_{k_n} \in K$ (i.e. $\dist(G(x_n), K) \le t_n$) and 
$f(x_n) \le \beta(p_{k_n}) + 1/(n + 1)$ in the case when $\beta_* > - \infty$, and $f(x_n) \to - \infty$ as 
$n \to \infty$ in the case when $\beta_* = - \infty$.

If $\beta_* > - \infty$, then thanks to assumption $(A14)$ one has
\begin{align*}
  \Theta_*(\lambda) = \lim_{n \to \infty} \Theta(\lambda, c_n) \le \lim_{n \to \infty} \mathscr{L}(x_n, \lambda, c_n)
  = \lim_{n \to \infty} f(x_n) &= \lim_{n \to \infty} \beta(p_{k_n}) 
  \\
  &= \liminf_{p \to 0} \beta(p),
\end{align*}
which along with \eqref{eq:OptValueLowerEstimate} implies the required result. In turn, if $\beta_* = - \infty$, then 
due to assumption $(A14)$ one has
\[
  \Theta_*(\lambda) = \lim_{n \to \infty} \Theta(\lambda, c_n) \le \lim_{n \to \infty} \mathscr{L}(x_n, \lambda, c_n)
  = \lim_{n \to \infty} f(x_n) = - \infty = \liminf_{p \to 0} \beta(p),
\]
which implies that equality \eqref{eq:DualFuncLim_vs_PerturbFunc} holds true.
\end{proof}

\begin{corollary}[duality gap formula] \label{crlr:DualityGapFormula}
If under the assumptions of the previous theorem one has $\dom_{\lambda} \Theta \ne \emptyset$, then
\[
  f_* - \Theta_* = \max\left\{ 0, f_* - \liminf_{p \to 0} \beta(p) \right\}.
\]
In particular, if the duality gap is positive, then it is equal to $f_* - \liminf_{p \to 0} \beta(p)$.
\end{corollary}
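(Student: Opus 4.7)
The plan is to derive this corollary as a one-line consequence of Theorem~\ref{thrm:DualOptVal_vs_OptValFunc}. Under the stated hypotheses and with $\dom_{\lambda}\Theta \ne \emptyset$, that theorem gives us directly
\[
  \Theta_* = \min\left\{ f_*, \liminf_{p \to 0} \beta(p) \right\}.
\]
So the whole proof reduces to subtracting both sides from $f_*$ and invoking the elementary identity $a - \min\{a, b\} = \max\{0, a - b\}$ valid for all $a, b \in \mathbb{R} \cup \{\pm\infty\}$ (as long as the subtraction is well-defined, which is guaranteed here because $f_*$ is finite by the standing assumption stated after $(\mathcal{P})$).

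Concretely, I would write: by Theorem~\ref{thrm:DualOptVal_vs_OptValFunc} one has $\Theta_* = \min\{f_*, \liminf_{p\to 0}\beta(p)\}$, and therefore
\[
  f_* - \Theta_* = f_* - \min\Big\{ f_*, \liminf_{p\to 0} \beta(p) \Big\} = \max\Big\{ 0, f_* - \liminf_{p\to 0}\beta(p) \Big\},
\]
which is the desired formula. The second assertion of the corollary follows by observing that $f_* - \Theta_* > 0$ forces the second argument of the maximum to be the active one, so $f_* - \Theta_* = f_* - \liminf_{p\to 0}\beta(p)$.

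There is no real obstacle here; the only mild subtlety is making sure $f_*$ is finite so the subtraction $f_* - \Theta_*$ is unambiguous, and checking that the identity $a - \min\{a,b\} = \max\{0, a-b\}$ remains valid when $b = \pm\infty$ (both extreme cases are fine: if $\liminf_{p\to 0}\beta(p) = -\infty$ then $\Theta_* = -\infty$ and $f_* - \Theta_* = +\infty = \max\{0, +\infty\}$; if $\liminf_{p\to 0}\beta(p) = +\infty$ then $\Theta_* = f_*$ and both sides equal $0$). Thus no separate case analysis beyond mentioning this is required.
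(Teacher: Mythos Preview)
Your proposal is correct and is precisely the intended argument: the paper states this corollary without proof, treating it as an immediate consequence of Theorem~\ref{thrm:DualOptVal_vs_OptValFunc} via the identity $f_* - \min\{f_*, b\} = \max\{0, f_* - b\}$, which is exactly what you do.
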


\begin{remark} \label{rmrk:OptValFuncFiniteLim}
Although in the proof of Theorem~\ref{thrm:DualOptVal_vs_OptValFunc} we considered the case $\beta_* = - \infty$, in
actuality, the assumptions of this theorem ensure that $\beta_* > - \infty$. Namely, if assumptions $(A7)$ and $(A14)$
are satisfied and $\mathscr{L}(\cdot, \lambda, c)$ is bounded below on $Q$ for some $\lambda \in \Lambda$ and $c > 0$,
then $\beta_* = \liminf_{p \to 0} \beta(p) > - \infty$. Indeed, suppose by contradiction that $\beta_* = - \infty$.
Then there exists a sequence $\{ p_n \} \in Y$ such that $p_n \to 0$ and $\beta(p_n) \to - \infty$ as $n \to \infty$.
By the definition of the optimal value function one can find a sequence $\{ x_n \} \subset Q$ such that
$G(x_n) - p_n \in K$ for all $n \in \mathbb{N}$ and $f(x_n) \to - \infty$ as $n \to \infty$. Note that 
$\dist(G(x_n), K) \to 0$ as $n \to \infty$, since $p_n$ converges to zero. 

Let $\{ c_n \} \subset (c, + \infty)$ be any increasing unbounded sequence and $\{ t_k \}$ be the sequence 
from assumption $(A14)$. Clearly, one can find a subsequence $\{ x_{n_k} \}$ such that 
$\dist(G(x_{n_k}), K) \le t_k$ for all $k \in \mathbb{N}$. Then by assumption $(A14)$ one has
$\Phi(G(x_{n_k}), \lambda, c_{n_k}) \to 0$ as $k \to \infty$, which implies that 
\[
  \lim_{k \to \infty} \mathscr{L}(x_{n_k}, \lambda, c_{n_k}) 
  = \lim_{k \to \infty} f(x_{n_k}) = - \infty,
\]
which due to assumption $(A7)$ contradicts the fact that $\mathscr{L}(\cdot, \lambda, c)$ is bounded below on $Q$.
\end{remark}

\begin{remark} \label{rmrk:ConstraintsBoundedBelow}
The claim of Theorem~\ref{thrm:DualOptVal_vs_OptValFunc} remains to hold true, if only restricted versions of
assumptions $(A13)$ and $(A14)$ hold true, and one additionally assumes that the projection of the set $G(Q)$ onto the
cone $K$ is bounded. If this projection is bounded, then one can show that the sequences $\{ G(x_{n_k}) \}$ appearing in
the proof of the theorem are also bounded. Therefore, only restricted versions of assumptions $(A13)$ and $(A14)$ are
needed to prove the theorem in this case, which makes the theorem applicable, for example, to He-Wu-Meng's augmented
Lagrangian (Example~\ref{ex:HeWuMeng}). 

Let us note that the assumption on the boundedness of the projection of $G(Q)$ onto $K$ is not uncommon in 
the literature on augmented Lagrangians and primal-dual augmented Lagrangian methods (see, e.g.
\cite[Assumption~2]{LuoSunLi}, \cite[Assumption~2]{LiuYang2008}, \cite[Assumption~2]{WangLi2009},
\cite[condition~(2)]{WangLiuQu}, etc.). In many particular cases this assumption is not restrictive from the theoretical
point of view. For example, one can always guarantee that this assumption is satisfied for inequality constrainted
problems by replacing the constraints $g_i(x) \le 0$ with $e^{g_i(x)} - 1 \le 0$.
\end{remark}

As a simple corollary to Theorem~\ref{thrm:DualOptVal_vs_OptValFunc} we can obtain necessary and sufficient conditions 
for the augmented Lagrangian $\mathscr{L}(x, \lambda, c)$ to have the zero duality gap property.

\begin{theorem}[zero duality gap characterisation] \label{thrm:ZeroDualityGapCharacterization}
Let assumptions $(A1)$, $(A7)$, and $(A12)$--$(A14)$ be valid. Then the zero duality gap property holds true if and only
if the optimal value function $\beta$ is lower semicontinuous (lsc) at the origin and there exist $\lambda \in \Lambda$
and $c > 0$ such that the function $\mathscr{L}(\cdot, \lambda, c)$ is bounded below on $Q$.
\end{theorem}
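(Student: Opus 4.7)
The plan is to derive this characterization as a direct corollary of Theorem~\ref{thrm:DualOptVal_vs_OptValFunc}, since that theorem already computes $\Theta_*$ in terms of $\liminf_{p \to 0} \beta(p)$ and the nonemptiness of $\dom_{\lambda} \Theta$. The key observation I will use throughout is that $\beta(0) = f_*$, which follows immediately from the definition of the perturbation function $\beta$ and the fact that the feasible region of $(\mathcal{P})$ is $\{x \in Q \mid G(x) \in K\} = \{x \in Q \mid G(x) - 0 \in K\}$.

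For the forward direction, I would assume $\Theta_* = f_*$ and recall that $f_*$ is finite by the standing hypothesis on $(\mathcal{P})$. Then $\Theta_* > -\infty$, so by the first case of Theorem~\ref{thrm:DualOptVal_vs_OptValFunc} we cannot have $\dom_{\lambda} \Theta = \emptyset$; hence there exist $\lambda \in \Lambda$ and $c > 0$ with $\Theta(\lambda, c) > - \infty$, i.e.\ $\mathscr{L}(\cdot, \lambda, c)$ is bounded below on $Q$. Applying the second case of Theorem~\ref{thrm:DualOptVal_vs_OptValFunc} then yields $f_* = \min\{f_*, \liminf_{p \to 0} \beta(p)\}$, which forces $\liminf_{p \to 0} \beta(p) \ge f_* = \beta(0)$. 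This is precisely the lower semicontinuity of $\beta$ at the origin.

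For the reverse direction, I would assume $\beta$ is lsc at $0$ and that $\mathscr{L}(\cdot, \lambda, c)$ is bounded below on $Q$ for some admissible pair, so that $\dom_{\lambda} \Theta \ne \emptyset$. Then Theorem~\ref{thrm:DualOptVal_vs_OptValFunc} gives $\Theta_* = \min\{f_*, \liminf_{p \to 0} \beta(p)\}$. Lower semicontinuity at the origin gives $\liminf_{p \to 0} \beta(p) \ge \beta(0) = f_*$, so the minimum equals $f_*$, and combined with weak duality (Proposition~\ref{prp:WeakDuality}) we conclude $\Theta_* = f_*$.

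There is no real obstacle here beyond the bookkeeping of ruling out $\dom_{\lambda} \Theta = \emptyset$ in the forward direction; the whole argument is a two-line unpacking of the optimal dual value formula together with the identity $\beta(0) = f_*$. The only mildly delicate point worth flagging is that if one did not assume $f_*$ to be finite, one would have to treat $f_* = -\infty$ separately, but under the standing assumption of the paper this case does not arise.
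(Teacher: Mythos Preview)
Your proposal is correct and follows essentially the same route as the paper: both directions are obtained by unpacking Theorem~\ref{thrm:DualOptVal_vs_OptValFunc} together with the identity $\beta(0) = f_*$, using the finiteness of $f_*$ to rule out $\dom_{\lambda}\Theta = \emptyset$ in the forward direction. The only minor difference is that you invoke weak duality in the reverse direction, which is harmless but unnecessary since Theorem~\ref{thrm:DualOptVal_vs_OptValFunc} already gives the equality $\Theta_* = \min\{f_*,\liminf_{p\to 0}\beta(p)\}$ directly.
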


\begin{proof}
Suppose that the zero duality gap property holds true. Then the optimal value of the dual problem is finite, which
implies that $\dom_{\lambda} \Theta \ne \emptyset$ or, equivalently, there exist $\lambda \in \Lambda$ and $c > 0$
such that the function $\mathscr{L}(\cdot, \lambda, c)$ is bounded below on $Q$. Moreover, 
$f_* = \Theta_* = \min\{ f_*, \liminf_{p \to 0} \beta(p) \}$ by Theorem~\ref{thrm:DualOptVal_vs_OptValFunc}, which
means that $\liminf_{p \to 0} \beta(p) \ge f_* = \beta(0)$, that is, the optimal value function $\beta$ is lsc 
at the origin.

Conversely, suppose that $\beta$ is lsc at the origin and $\dom_{\lambda} \Theta \ne \emptyset$. Then by
Theorem~\ref{thrm:DualOptVal_vs_OptValFunc} one has $\Theta_* = f_*$, that is, there is zero duality gap between 
the primal and dual problems.
\end{proof}

\begin{remark}
Let us note that one can prove the zero duality gap property for $\mathscr{L}(\cdot)$ under slightly less restrictive
assumptions on the function $\Phi$ than in the previous theorems. Namely, instead of assuming that the claims of
assumptions $(A12)$--$(A14)$, $(A16)$ are satisfied \textit{for all} $\lambda \in \Lambda$, it is sufficient to suppose
that \textit{there exists} $\lambda_0 \in \dom_{\lambda} \Theta$ satisfying these assumptions. Then arguing in the
same way as in the proof of Theorem~\ref{thrm:DualOptVal_vs_OptValFunc} one can check that
\[
  f_* \ge \sup_{\lambda \in \Lambda, c > 0} \Theta(\lambda, c) \ge \lim_{c \to + \infty} \Theta(\lambda_0, c) 
  = \min\big\{ f_*, \liminf_{p \to 0} \beta(p) \big\}.
\]
This inequality obviously implies that the zero duality gap property holds true, provided the optimal value function
$\beta$ is lsc at the origin. Although such small change in the assumptions of the theorem might seem insignificant, in
actuality it considerably broadens the class of augmented Lagrangians to which the sufficient conditions for the
validity of the zero duality gap property can be applied. For example, Theorems~\ref{thrm:DualOptVal_vs_OptValFunc} and
\ref{thrm:ZeroDualityGapCharacterization} are inapplicable to the exponential-type augmented Lagrangian
(Example~\ref{ex:ExpTypeAugmLagr}), since this augmented Lagrangian does not satisfy assumption $(A12)$. However, it
satisfies the claim of assumption $(A12)$ for any $\lambda \in \mathbb{R}_+^m$ that lies in the interior of
$\mathbb{R}_+^m$ (i.e. that does not have zero components) and, therefore, one can conclude that the zero duality gap
property holds true for the exponential-type augmented Lagrangian, provided the optimal value function is lsc at the
origin and there exists $\lambda_0 \in \dom_{\lambda} \Theta \cap \interior \mathbb{R}_+^m$.
\end{remark}

\begin{remark}
Theorem~\ref{thrm:ZeroDualityGapCharacterization} implies that under suitable assumptions the zero duality gap property
depends not on the properties of the augmented Lagrangian $\mathscr{L}(\cdot)$, but rather properties of the
optimization problem $(\mathcal{P})$ itself. Similarly, by Corollary~\ref{crlr:DualityGapFormula} the duality gap 
$f_* - \Theta_*$ does not depend on the augmented Lagrangian or even some characteristic of the dual problem 
$(\mathcal{D})$. It is completely predefined by the properties of the optimization problem under consideration. 
Thus, in a sense, the absence of the duality gap between the primal and dual problems, as well as the size
of the duality gap, when it is positive, are properties of optimization problems themselves, not augmented Lagrangians 
or augmented dual problems that are used for analysing and/or solving these problems.
\end{remark}

For the sake of completeness, let us also present a simple characterisation of the lower semicontinuity of the
optimal value function $\beta$, from which one can easily derive a number of well-known sufficient conditions for this
function to be lsc at the origin.

\begin{proposition} \label{prp:OptValFunc_LSC}
For the optimal value function $\beta$ to be lsc at the origin it is necessary and sufficient that there does not
exist a sequence $\{ x_n \} \subset Q$ such that $\dist(G(x_n), K) \to 0$ as $n \to \infty$ and 
$\liminf_{n \to \infty} f(x_n) < f_*$.
\end{proposition}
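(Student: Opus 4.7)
The plan is to unpack the definition of lower semicontinuity and then pass back and forth between sequences of perturbations $\{p_n\}$ with $p_n \to 0$ and sequences of almost-feasible points $\{x_n\} \subset Q$ with $\dist(G(x_n), K) \to 0$. Since $\beta(0) = f_*$, lower semicontinuity of $\beta$ at the origin is just the inequality $\liminf_{p \to 0} \beta(p) \ge f_*$, so the whole argument reduces to showing that this inequality is equivalent to the nonexistence of a sequence $\{x_n\} \subset Q$ with $\dist(G(x_n),K) \to 0$ and $\liminf f(x_n) < f_*$.

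For necessity I would argue by contraposition. Suppose $\{x_n\} \subset Q$ satisfies $\dist(G(x_n), K) \to 0$ and $\liminf f(x_n) < f_*$. For each $n$, by the definition of the distance to $K$, pick $z_n \in K$ with $\|G(x_n) - z_n\| \le \dist(G(x_n), K) + 1/n$ and set $p_n := G(x_n) - z_n$. Then $p_n \to 0$ and $G(x_n) - p_n = z_n \in K$, so $x_n$ is admissible for the perturbed problem with perturbation $p_n$; hence $\beta(p_n) \le f(x_n)$. Taking $\liminf$ gives $\liminf_{p \to 0} \beta(p) \le \liminf_n \beta(p_n) \le \liminf_n f(x_n) < f_*$, so $\beta$ is not lsc at $0$.

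For sufficiency, I would pick a sequence $\{p_n\} \subset Y$ with $p_n \to 0$ realizing the value $L := \liminf_{p \to 0} \beta(p)$, i.e.\ $\beta(p_n) \to L$. Assume toward contradiction that $L < f_*$. Then eventually $\beta(p_n) < f_* < +\infty$, so the feasible set $\{ x \in Q \mid G(x) - p_n \in K \}$ is nonempty; by the infimum definition we can select $x_n \in Q$ with $G(x_n) - p_n \in K$ and $f(x_n) \le \beta(p_n) + 1/n$ when $\beta(p_n) > -\infty$, and $f(x_n) \le -n$ when $\beta(p_n) = -\infty$. In either case $\limsup_n f(x_n) \le L < f_*$, and $\dist(G(x_n), K) \le \|p_n\| \to 0$, producing exactly the forbidden sequence and contradicting the hypothesis. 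Hence $L \ge f_*$.

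The proof is essentially bookkeeping; the only mild subtlety is handling the case where some $\beta(p_n)$ might be $-\infty$ (so one cannot write $f(x_n) \le \beta(p_n) + 1/n$), but a two-case choice of $x_n$ as above handles it cleanly, and finiteness of $f_*$ (assumed throughout the paper) ensures the contradiction is genuine. No assumption on $\Phi$ or on the augmented Lagrangian is required — the statement is purely about the perturbation function $\beta$.
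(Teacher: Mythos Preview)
Your proof is correct and follows essentially the same contraposition strategy as the paper in both directions. In fact, your necessity argument is slightly more careful: the paper sets $p_n = G(x_n)$ and claims $p_n \to 0$, which does not follow from $\dist(G(x_n),K)\to 0$ alone, whereas your choice $p_n = G(x_n) - z_n$ with $z_n \in K$ near $G(x_n)$ cleanly yields $p_n\to 0$ and $G(x_n)-p_n\in K$.
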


\begin{proof}
\textbf{Necessity.} Suppose that $\beta$ is lsc at the origin. Let $\{ x_n \} \subset Q$ be any sequence such that 
$\dist(G(x_n), K) \to 0$ as $n \to \infty$. Denote $p_n = G(x_n)$. Then $p_n \to 0$ as $n \to \infty$ and due to 
the lower semicontinuity of $\beta$ at the origin one has
\[
  \liminf_{n \to \infty} f(x_n) \ge \liminf_{n \to \infty} \beta(p_n) \ge \beta(0) = f_*.
\]
In other words, there does not exist a sequence $\{ x_n \} \subset Q$ satisfying the conditions from the formulation 
of the proposition.

\textbf{Sufficiency.} Suppose by contradiction that the function $\beta$ is not lsc at the origin. Then there exist 
$\varepsilon > 0$ and a sequence $\{ p_n \} \subset Y$ converging to zero and such that 
$\beta(p_n) \le \beta(0) - \varepsilon$ for all $n \in \mathbb{N}$. By the definition of the optimal value function for
any $n \in \mathbb{N}$ one can find $x_n \in Q$ such that $G(x_n) \in K + p_n$ and $f(x_n) \le f_* - \varepsilon / 2$
(recall that $\beta(0) = f_*$). Therefore $\dist(G(x_n), K) \to 0$ as $n \to \infty$ and 
$\liminf_{n \to \infty} f(x_n) < f_*$, which contradicts the assumptions of the proposition that there does not exist 
a sequence $\{ x_n \} \subset Q$ satisfying these conditions.
\end{proof}

\begin{corollary} \label{crlr:OptValFunc_LSC_ReflexiveCase}
Let the space $X$ be reflexive, the set $Q$ be weakly sequentially closed (in particular, one can suppose that $Q$ is
convex), and the functions $f$ and $\dist(G(\cdot), K)$ be weakly sequentially lsc on $Q$. Then for the optimal value
function $\beta$ to be lsc at the origin it is necessary and sufficient there does not exist a sequence
$\{ x_n \} \subset Q$ such that $\| x_n \| \to + \infty$ and $\dist(G(x_n), K) \to 0$ as $n \to \infty$, and 
$\liminf_{n \to \infty} f(x_n) < f_*$.
\end{corollary}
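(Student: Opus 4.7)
The plan is to deduce this corollary from Proposition~\ref{prp:OptValFunc_LSC} by showing that, under the reflexivity and weak lower-semicontinuity hypotheses, any ``bad'' sequence witnessing the failure of lower semicontinuity of $\beta$ at the origin can be upgraded to one whose norms diverge to infinity. Necessity is immediate: if $\beta$ is lsc at the origin, Proposition~\ref{prp:OptValFunc_LSC} forbids the existence of \emph{any} sequence $\{x_n\} \subset Q$ with $\dist(G(x_n), K) \to 0$ and $\liminf f(x_n) < f_*$, and in particular forbids unbounded ones.

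For sufficiency I would argue by contraposition. Assume $\beta$ is not lsc at the origin. By Proposition~\ref{prp:OptValFunc_LSC} there exists a sequence $\{x_n\} \subset Q$ with $\dist(G(x_n), K) \to 0$ and $\liminf_n f(x_n) < f_*$. Pick $\varepsilon > 0$ such that $\liminf_n f(x_n) < f_* - \varepsilon$ and extract a subsequence (still denoted $\{x_n\}$) with $f(x_n) \le f_* - \varepsilon$ and $\dist(G(x_n), K) \to 0$. The goal is to show that this subsequence must be (or can be taken to be) norm-unbounded.

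Suppose instead that some further subsequence $\{x_{n_k}\}$ is bounded. Since $X$ is reflexive, by the Eberlein--\v{S}mulian theorem we may extract a weakly convergent subsubsequence $x_{n_{k_j}} \rightharpoonup x_*$. The weak sequential closedness of $Q$ gives $x_* \in Q$, the weak sequential lower semicontinuity of $\dist(G(\cdot), K)$ together with $\dist(G(x_{n_{k_j}}), K) \to 0$ yields $\dist(G(x_*), K) = 0$, and since $K$ is closed this means $G(x_*) \in K$, so $x_*$ is feasible for $(\mathcal{P})$. Finally, the weak sequential lower semicontinuity of $f$ gives
\[
  f(x_*) \le \liminf_{j \to \infty} f(x_{n_{k_j}}) \le f_* - \varepsilon,
\]
contradicting $f(x_*) \ge f_*$. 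Hence no bounded subsequence exists, i.e.\ $\|x_n\| \to +\infty$, which produces the forbidden sequence in the corollary and completes the contrapositive.

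The main (mild) obstacle is bookkeeping in the subsequence extraction: one needs to first trim to a subsequence on which $f$ is uniformly bounded above by $f_* - \varepsilon$ before invoking reflexivity, so that the reflexivity argument genuinely yields the contradiction rather than merely $f(x_*) \le f_*$. Apart from this, the argument is a direct combination of Proposition~\ref{prp:OptValFunc_LSC} with the standard weak compactness + weak lsc toolkit.
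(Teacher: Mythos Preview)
Your proof is correct and follows essentially the same route as the paper: both invoke Proposition~\ref{prp:OptValFunc_LSC} for necessity, and for sufficiency both use reflexivity to extract a weakly convergent subsequence from a hypothetical bounded ``bad'' sequence and then derive a contradiction from the weak sequential lower semicontinuity of $f$ and $\dist(G(\cdot),K)$ together with the weak sequential closedness of $Q$. The only cosmetic difference is that the paper phrases the sufficiency direction directly (showing no bounded bad sequence can exist), whereas you phrase it contrapositively (showing any bad sequence must be unbounded); the underlying argument is identical.
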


\begin{proof}
The necessity of the conditions from the formulation of the corollary for the lower semicontinuity of the function
$\beta$ follows directly from the previous proposition. Let us prove that they are also sufficient for the lower
semicontinuity of $\beta$.

Taking into account Proposition~\ref{prp:OptValFunc_LSC} it is sufficient to prove that there does not exists a bounded
sequence $\{ x_n \} \subset Q$ such that $\dist(G(x_n), K) \to 0$ as $n \to \infty$ and 
$\liminf_{n \to \infty} f(x_n) < f_*$. Suppose by contradiction that such bounded sequence exists. Replacing this
sequence with a subsequence, if necessary, one can assume that the sequence $\{ f(x_n) \}$ converges. Since the space
$X$ is reflexive, one can extract a subsequence $\{ x_{n_k} \}$ that weakly converges to some point $x_*$ that belongs
to the set $Q$, since this set is weakly sequentially closed. Furthermore, $G(x_*) \in K$, i.e. $x_*$ is feasible for
the problem $(\mathcal{P})$, since $\dist(G(x_n), K) \to 0$ as $n \to \infty$ and the function $\dist(G(\cdot), K)$ is
weakly sequentially lsc. Hence taking into account the fact that $f$ is also weakly sequentially lsc one gets that
\[
  f_* > \lim_{n \to \infty} f(x_n) = \lim_{k \to \infty} f(x_{n_k}) \ge f(x_*),
\]
which is impossible by virtue of the fact that the point $x_*$ is feasible for the problem $(\mathcal{P})$.
\end{proof}

\begin{corollary} \label{crlr:OptValFunc_LSC_SuffCond}
Let the assumptions of the previous corollary be satisfied and one of the following conditions hold true:
\begin{enumerate}
\item{the set $Q$ is bounded;}

\item{the function $f$ is coercive on $Q$, that is, for any sequence $\{ x_n \} \subset Q$ such that 
$\| x_n \| \to + \infty$ as $n \to \infty$ one has $f(x_n) \to + \infty$ as $n \to \infty$;}

\item{the function $\dist(G(\cdot), K)$ is coercive on $Q$;}

\item{the penalty function $f(\cdot) + c \dist(G(\cdot), K)^{\alpha}$ is coercive on $Q$ for some $c > 0$ and 
$\alpha > 0$.}
\end{enumerate}
Then the optimal value function $\beta$ is lsc at the origin.
\end{corollary}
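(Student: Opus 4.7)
The plan is to invoke the previous corollary (Corollary~\ref{crlr:OptValFunc_LSC_ReflexiveCase}) and show that under any of the four listed conditions the forbidden sequence identified there, namely $\{ x_n \} \subset Q$ with $\| x_n \| \to + \infty$, $\dist(G(x_n), K) \to 0$, and $\liminf_{n \to \infty} f(x_n) < f_*$, cannot exist. Since the reflexivity, weak sequential closedness of $Q$, and the weak sequential lower semicontinuity of $f$ and $\dist(G(\cdot), K)$ on $Q$ are already in force, it suffices to check this nonexistence case-by-case under each of the additional hypotheses (1)--(4).

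In case (1) the claim is immediate: a bounded set $Q$ admits no sequence with $\| x_n \| \to + \infty$. In case (2), the coercivity of $f$ on $Q$ forces $f(x_n) \to + \infty$ whenever $\| x_n \| \to + \infty$, so $\liminf_{n \to \infty} f(x_n) = + \infty > f_*$, contradicting the assumed strict inequality (note that $f_*$ is finite by hypothesis). In case (3), the coercivity of $\dist(G(\cdot), K)$ yields $\dist(G(x_n), K) \to + \infty$ along any sequence with $\| x_n \| \to + \infty$, directly contradicting the requirement $\dist(G(x_n), K) \to 0$.

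Case (4) is the only one requiring a small combination argument. If $f(\cdot) + c \dist(G(\cdot), K)^{\alpha}$ is coercive on $Q$, then $f(x_n) + c \dist(G(x_n), K)^{\alpha} \to + \infty$ whenever $\| x_n \| \to + \infty$; combined with $\dist(G(x_n), K) \to 0$, which makes the penalty term vanish, this forces $f(x_n) \to + \infty$, again contradicting $\liminf_{n \to \infty} f(x_n) < f_* < + \infty$.

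In all four cases the nonexistence statement of Corollary~\ref{crlr:OptValFunc_LSC_ReflexiveCase} is verified, so $\beta$ is lsc at the origin. There is no real obstacle here; the only point worth stating carefully is that the finiteness of $f_*$ (assumed at the beginning of Section~\ref{sect:AxiomaticSetting}) is what makes cases (2) and (4) actually contradictory.
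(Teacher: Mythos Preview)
Your proof is correct and follows exactly the approach the paper intends: the paper states this corollary without proof, treating it as an immediate consequence of Corollary~\ref{crlr:OptValFunc_LSC_ReflexiveCase}, and your case-by-case verification that the forbidden unbounded sequence cannot exist under any of conditions (1)--(4) is precisely the argument being left to the reader.
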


Thus, by Theorem~\ref{thrm:ZeroDualityGapCharacterization} and Corollary~\ref{crlr:OptValFunc_LSC_ReflexiveCase}, in 
the case when the space $X$ reflexive (in particular, in the finite dimensional case), under some natural lower
semicontinuity assumptions the duality gap between the problems $(\mathcal{P})$ and $(\mathcal{D})$ is positive if and
only if there exists an unbounded sequence $\{ x_n \} \subset Q$ such that $\dist(G(x_n), K) \to 0$ as $n \to \infty$
and the lower limit of the sequence $\{ f(x_n) \}$ is smaller than the optimal value of the problem $(\mathcal{P})$.
Furthermore, one can verify that the infimum of all such lower limits is equal to $\liminf_{p \to 0} \beta(p)$ and,
therefore, defines the value of the duality gap $f_* - \Theta_*$. 

\begin{remark}
Many existing results on the zero duality gap property for various augmented Lagrangians are either particular
cases of Theorems~\ref{thrm:DualOptVal_vs_OptValFunc} and \ref{thrm:ZeroDualityGapCharacterization} combined with
Corollaries~\ref{crlr:OptValFunc_LSC_ReflexiveCase} and \ref{crlr:OptValFunc_LSC_SuffCond} or can be easily derived
directly from these theorems and corollaries, including \cite[Theorem~4.1]{HuangYang2003},
\cite[Theorem~2.2]{HuangYang2005}, \cite[Theorem~3]{BurachikGasimov}, \cite[Theorem~2.1]{LiuYang2008},
\cite[Theorem~4.1]{ZhouYang2009}, \cite[Theorem~2.1]{ZhouYang2012}, the claims about the zero duality gap property in
Theorems~7, 9, and 11 in \cite{YalchinKasimbeyli}, etc.
\end{remark}

\subsection{Optimal dual solutions and global saddle points}

As we will show below, dual convergence of augmented Lagrangian methods (that is, the convergence of the sequence of
multipliers) is directly connected with the existence of globally optimal solutions of the dual problem $(\mathcal{D})$.
Therefore, let us analyse main properties of optimal dual solutions that will help us to better understand dual 
convergence of augmented Lagrangian methods. For the sake of shortness, below we use the term 
\textit{optimal dual solution}, instead of globally optimal solution of the dual problem $(\mathcal{D})$. 

First, we make a simple observation about the role of the penalty parameter $c$ in optimal dual solutions.

\begin{proposition} \label{prp:DualSolution_2ndComponent}
Let assumption $(A7)$ hold true and $(\lambda_*, c_*)$ be an optimal dual solution. Then for any $c \ge c_*$ 
the equality $\Theta(\lambda_*, c) = \Theta(\lambda_*, c_*)$ holds true and the pair $(\lambda_*, c)$ is also 
an optimal dual solution.
\end{proposition}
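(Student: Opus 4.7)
The plan is to exploit assumption $(A7)$ (monotonicity of $\Phi$ in $c$) to show that $\Theta(\lambda_*,\cdot)$ is non-decreasing in $c$, and then squeeze $\Theta(\lambda_*,c)$ between $\Theta(\lambda_*,c_*)$ and the dual optimal value $\Theta_*$.

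First I would observe that, by $(A7)$, for every fixed $x \in Q$ and $\lambda \in \Lambda$ the map $c \mapsto \Phi(G(x),\lambda,c)$ is non-decreasing, hence so is $c \mapsto \mathscr{L}(x,\lambda,c) = f(x) + \Phi(G(x),\lambda,c)$ (with the convention $\mathscr{L}(x,\lambda,c) = -\infty$ when $\Phi(G(x),\lambda,c) = -\infty$). Taking the pointwise infimum over $x \in Q$ preserves monotonicity in $c$, so $c \mapsto \Theta(\lambda,c)$ is non-decreasing for every $\lambda \in \Lambda$.

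Next I would apply this monotonicity with $\lambda = \lambda_*$: for any $c \ge c_*$,
\[
  \Theta(\lambda_*,c) \ge \Theta(\lambda_*,c_*) = \Theta_*,
\]
the last equality holding because $(\lambda_*,c_*)$ is an optimal dual solution. On the other hand, since $\lambda_* \in \Lambda$ and $c > 0$, the pair $(\lambda_*,c)$ is feasible for the dual problem $(\mathcal{D})$, and hence by definition of $\Theta_*$ as the supremum of $\Theta$ over $\Lambda \times (0,+\infty)$ one also has $\Theta(\lambda_*,c) \le \Theta_*$. Combining the two inequalities yields $\Theta(\lambda_*,c) = \Theta_* = \Theta(\lambda_*,c_*)$, which proves both the claimed equality and the fact that $(\lambda_*,c)$ is an optimal dual solution.

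There is no real obstacle here: the result is essentially a direct consequence of the monotonicity axiom $(A7)$ combined with the definition of dual optimality. The only mild subtlety is handling the case where $\Phi(G(x),\lambda_*,c)$ may equal $\pm\infty$ for some $x$, but the convention used to define $\mathscr{L}$ ensures that taking the infimum over $x \in Q$ is well-defined in $\mathbb{R} \cup \{\pm\infty\}$ and still preserves the monotonicity of $\Theta(\lambda_*,\cdot)$ in $c$.
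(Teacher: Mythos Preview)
Your proof is correct and follows essentially the same approach as the paper's: both use assumption $(A7)$ to deduce that $\Theta(\lambda,\cdot)$ is non-decreasing, then combine $\Theta(\lambda_*,c) \ge \Theta(\lambda_*,c_*) = \Theta_*$ with the trivial upper bound $\Theta(\lambda_*,c) \le \Theta_*$. Your write-up is slightly more explicit about the squeeze and the handling of $\pm\infty$ values, but the argument is identical.
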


\begin{proof}
Under the assumption $(A7)$ the augmented Lagrangian $\mathscr{L}(x, \lambda, c)$ is non-decreasing in $c$, which
obviously implies that the augmented dual function $\Theta(\lambda, c) = \inf_{x \in Q} \mathscr{L}(x, \lambda, c)$ is
non-decreasing in $c$ as well. Therefore, for any $c \ge c_*$ one has $\Theta(\lambda_*, c) \ge \Theta(\lambda, c_*)$,
which means that $(\lambda_*, c)$ is a globally optimal solution of the dual problem and 
$\Theta(\lambda_*, c) = \Theta(\lambda_*, c_*)$.
\end{proof}

With the use of the previous proposition we can describe the structure of the set of optimal dual solutions, which we
denote by $\mathscr{D}_*$. Define
\[
  c_*(\lambda) = \inf\big\{ c > 0 \bigm| (\lambda, c) \in \mathscr{D}_* \big\} \quad \forall \lambda \in \Lambda. 
\]
Note that by definition $c_*(\lambda) = + \infty$, if $(\lambda, c) \notin \mathscr{D}_*$ for any $c > 0$. In addition,
if $c_*(\lambda) < + \infty$ and assumption $(A7)$ holds true, then according to the previous proposition
$(\lambda, c) \in \mathscr{D}_*$ for any $c > c_*(\lambda)$. Following the work of Burachik et al.
\cite{BurachikIusemMelo2015}, we call the function $c_*(\cdot)$ \textit{the penalty map}. 

Let us prove some properties of the penalty map and describe the structure of the set of optimal dual solutions with 
the use of this map.

\begin{corollary} \label{crlr:OptDualSolSet_ExactPenaltyMap}
Let assumptions $(A7)$, $(A9)$, and $(A10)$ be valid and the dual problem $(\mathcal{D})$ have a globally optimal
solution. Then the set $\dom c_*(\cdot)$ is convex, the penalty map $c_*(\cdot)$ is a quasiconvex function and
\begin{align*}
  \mathscr{D}_* &= \Big\{ \{ \lambda \} \times [c_*(\lambda), + \infty) \Bigm| 
  \lambda \in \dom c_*(\cdot) \colon c_*(\lambda) > 0 \Big\}
  \\
  &\bigcup \Big\{ \{ \lambda \} \times (0, + \infty) \Bigm| 
  \lambda \in \dom c_*(\cdot) \colon c_*(\lambda) = 0 \Big\}.
\end{align*}
\end{corollary}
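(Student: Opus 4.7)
The plan is to first pin down the ``vertical'' structure of $\mathscr{D}_*$ over each $\lambda \in \dom c_*$ (which gives both the displayed union and convexity of $\dom c_*$ once quasiconvexity is handled), and then to deduce the quasiconvexity of $c_*(\cdot)$ from concavity of $\Theta(\cdot, c)$. The key tools are Proposition~\ref{prp:DualSolution_2ndComponent} (monotonicity of the dual in $c$, which comes from $(A7)$), upper semicontinuity of $\Theta$ in $(\lambda, c)$ coming from $(A10)$, and concavity of $\Theta(\cdot, c)$ for fixed $c$ coming from $(A9)$.

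For the structural part, fix $\lambda \in \dom c_*(\cdot)$. By definition of $c_*(\lambda)$ one can pick a sequence $c_n \downarrow c_*(\lambda)$ with $(\lambda, c_n) \in \mathscr{D}_*$, i.e.\ $\Theta(\lambda, c_n) = \Theta_*$. If $c_*(\lambda) > 0$, then $(A10)$ together with the fact that $\Theta$ is the infimum of the family $\{ \mathscr{L}(x, \cdot, \cdot) \}_{x \in Q}$ of upper semicontinuous functions (cf.\ Remark~\ref{rmrk:DualFunc_Concave_usc}) yields $\Theta(\lambda, c_*(\lambda)) \geq \limsup_n \Theta(\lambda, c_n) = \Theta_*$, so in fact equality holds and $(\lambda, c_*(\lambda)) \in \mathscr{D}_*$. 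Applying Proposition~\ref{prp:DualSolution_2ndComponent} to this point one extends optimality to every $c \geq c_*(\lambda)$, giving the half-line $\{ \lambda \} \times [c_*(\lambda), + \infty)$. If instead $c_*(\lambda) = 0$, then for each $c > 0$ one may pick $c_n$ with $0 < c_n \leq c$ and $(\lambda, c_n) \in \mathscr{D}_*$, and Proposition~\ref{prp:DualSolution_2ndComponent} again gives $(\lambda, c) \in \mathscr{D}_*$, producing the half-line $\{ \lambda \} \times (0, + \infty)$.

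For quasiconvexity (and, simultaneously, convexity of $\dom c_*$), take $\lambda_1, \lambda_2 \in \dom c_*$, $\alpha \in [0, 1]$, and set $\lambda_\alpha = \alpha \lambda_1 + (1 - \alpha) \lambda_2$ and $c_m = \max\{ c_*(\lambda_1), c_*(\lambda_2) \}$. For any $c > c_m$ the previous step gives $(\lambda_i, c) \in \mathscr{D}_*$, i.e.\ $\Theta(\lambda_i, c) = \Theta_*$, $i = 1, 2$. Concavity of $\Theta(\cdot, c)$, which follows from $(A9)$ since $\Theta(\cdot, c)$ is the infimum in $x \in Q$ of the family $\lambda \mapsto \mathscr{L}(x, \lambda, c)$ of concave functions, then gives
\[
  \Theta(\lambda_\alpha, c) \geq \alpha \Theta(\lambda_1, c) + (1 - \alpha) \Theta(\lambda_2, c) = \Theta_*,
\]
and the reverse inequality is the definition of $\Theta_*$. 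Hence $(\lambda_\alpha, c) \in \mathscr{D}_*$ for every $c > c_m$, so $c_*(\lambda_\alpha) \leq c_m = \max\{ c_*(\lambda_1), c_*(\lambda_2) \}$. This proves both that $\dom c_*$ is convex and that $c_*$ is quasiconvex on it.

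The only genuinely delicate point is the attainment claim $(\lambda, c_*(\lambda)) \in \mathscr{D}_*$ when $c_*(\lambda) > 0$; it is exactly where upper semicontinuity in $(A10)$ is needed, and it is what forces the split into the two cases ``$c_*(\lambda) > 0$'' (closed half-line) versus ``$c_*(\lambda) = 0$'' (open half-line, because the dual problem is posed on $c > 0$). Everything else is a short consequence of weak duality, Proposition~\ref{prp:DualSolution_2ndComponent}, and concavity/usc of $\Theta$; no further assumptions are used, and in particular the existence hypothesis on $\mathscr{D}_*$ is used only to guarantee that at least one $\lambda$ has $c_*(\lambda) < + \infty$, so that the statement is not vacuous.
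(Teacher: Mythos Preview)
Your proof is correct and follows essentially the same approach as the paper: both use Proposition~\ref{prp:DualSolution_2ndComponent} (from $(A7)$) to extend optimality along the $c$-axis, upper semicontinuity of $\Theta$ (from $(A10)$) to pick up the endpoint $c_*(\lambda)$ when it is positive, and concavity of $\Theta(\cdot,c)$ (from $(A9)$) to obtain quasiconvexity of $c_*(\cdot)$ and convexity of its domain. The only cosmetic difference is the order of presentation---you handle the vertical structure first and then quasiconvexity, whereas the paper does the reverse---and the paper notes in one line that the converse inclusion $\mathscr{D}_* \subseteq \bigcup(\cdots)$ is immediate from the definition of $c_*(\cdot)$, which you leave implicit.
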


\begin{proof}
Let us first show that the set $\dom c_*(\cdot)$ is convex. Indeed, choose any 
$\lambda_1, \lambda_2 \in \dom c_*(\cdot)$, $\alpha \in [0, 1]$, and $c > \max\{ c_*(\lambda_1), c_*(\lambda_2) \}$.
Then by Propositon~\ref{prp:DualSolution_2ndComponent} both $(\lambda_1, c) \in \mathscr{D}_*$ and 
$(\lambda_2, c) \in \mathscr{D}_*$. Hence taking into account the fact that the dual function $\Theta(\lambda, c)$ is
concave in $\lambda$ by assumption $(A9)$ one gets
\[
  \Theta(\alpha \lambda_1 + (1 - \alpha) \lambda_2, c) \ge
  \alpha \Theta(\lambda_1, c) + (1 - \alpha) \Theta(\lambda_2, c) = \alpha \Theta_* + (1 - \alpha) \Theta_*
  = \Theta_*.
\]
Therefore $(\alpha \lambda_1 + (1 - \alpha) \lambda_2, c) \in \mathscr{D}_*$, which means that
$\alpha \lambda_1 + (1 - \alpha) \lambda_2 \in \dom c_*(\cdot)$ and the set $\dom c_*(\cdot)$ is convex. Moreover,
since $c > \max\{ c_*(\lambda_1), c_*(\lambda_2) \}$ was chosen aribtrarily, one has
$c_*(\alpha \lambda_1 + (1 - \alpha) \lambda_2) \le \max\{ c_*(\lambda_1), c_*(\lambda_2) \}$, that is,
the penalty map is a quasiconvex function.

As was noted above, for any $\lambda \in \dom c_*(\cdot)$ and $c > c_*(\lambda)$ one has 
$(\lambda, c) \in \mathscr{D}_*$. Hence bearing in mind the fact that the augmented dual function $\Theta$ is
upper semicontinuous (usc) by assumption $(A10)$ one can conclude that 
\begin{align*}
  \{ \lambda \} \times [c_*(\lambda), + \infty) &\subseteq \mathscr{D}_*
  \quad \forall \lambda \in \dom c_*(\cdot) \colon c_*(\lambda) > 0,
  \\
  \{ \lambda \} \times (0, + \infty) &\subseteq \mathscr{D}_*
  \quad \forall \lambda \in \dom c_*(\cdot) \colon c_*(\lambda) = 0.
\end{align*}
The validity of the converse inclusions follows directly from the definition of the penalty map and
Proposition~\ref{prp:DualSolution_2ndComponent}.
\end{proof}

\begin{remark}
Note that we need to consider the case $c_*(\lambda) = 0$ separately due to the fact that many particular augmented
Lagrangians are not defined for $c = 0$ (see examples in Section~\ref{sect:Examples}). However, if a given augmented
Lagrangian is correctly defined for $c = 0$ and assumptions $(A7)$, $(A9)$, and $(A10)$ are satisfied for 
$c \in [0, + \infty)$, then
\[
  \mathscr{D}_* = \bigcup_{\lambda_* \in \dom c_*(\cdot)} \{ \lambda_* \} \cup [c_*(\lambda_*), + \infty)
  = \epigraph c_*(\cdot),
\]
where $\epigraph c_*(\cdot)$ is the epigraph of the penalty map. This equality holds true, in particular, for 
Rockafellar-Wets' augmented Lagrangian from Example~\ref{ex:RockafellarWetsAugmLagr}. Let us also note that in the case
when assumption $(A9)_s$ is satisfied (e.g. in the case of Rockafellar-Wets' augmented Lagrangian) the penalty map is
convex, since in this case the dual function $\Theta$ is concave and, therefore, the set of optimal dual solutions
$\mathscr{D}_*$ is convex.
\end{remark}

Optimal dual solutions can be described in terms of global saddle points of the augmented Lagrangian.

\begin{definition}
A pair $(x_*, \lambda_*) \in Q \times \Lambda$ is called a global saddle point of the augmented Lagrangian 
$\mathscr{L}(\cdot)$, if there exists $c_* > 0$ such that
\begin{equation} \label{eq:GSP_def}
  \sup_{\lambda \in \Lambda} \mathscr{L}(x_*, \lambda, c) \le \mathscr{L}(x_*, \lambda_*, c) 
  \le \inf_{x \in Q} \mathscr{L}(x, \lambda_*, c) < + \infty
  \quad \forall c \ge c_*.
\end{equation}
The infimum of all such $c_*$ is denoted by $c_*(x_*, \lambda_*)$ and is call 
\textit{the least exact penalty parameter} for the global saddle point $(x_*, \lambda_*)$.
\end{definition}

\begin{remark}
It is worth noting that inequalities \eqref{eq:GSP_def} from the definition of global saddle point are obviously
satisfied as (and, therefore, can be replaced with) equalities.
\end{remark}

The following theorem, that combines together several well-known results (cf. \cite[Theorem~11.59]{RockafellarWets},
\cite[Theorem~2.1, part~(v)]{ShapiroSun}, \cite[Theorem~2.1]{ZhouZhouYang2014}, etc.), shows how optimal dual solutions
are interconnected with global saddle points. We present a complete proof of this theorem for the sake of completeness
and due to the fact that, to the best of the author's knowledge, all three claims of this theorem cannot be derived from
existing results within our axiomatic augmented Lagrangian setting.

\begin{theorem} \label{thrm:OptDualSol_vs_GlobalSaddlePoints}
Let assumptions $(A1)$--$(A3)$ and $(A7)$ be valid. Then the following statements hold true:
\begin{enumerate}
\item{if a global saddle point $(x_*, \lambda_*)$ of $\mathscr{L}(\cdot)$ exists, then the zero duality gap property
holds true, $x_*$ is a globally optimal solution of the problem $(\mathcal{P})$, and for any $c_* > c_*(x_*, \lambda_*)$
the pair $(\lambda_*, c_*)$ is an optimal dual solution;
\label{st:GSP_DualSol}
}

\item{if $(\lambda_*, c_*)$ is an optimal dual solution and the zero duality gap property holds true, then for any
globally optimal solution $x_*$ of the problem $(\mathcal{P})$ the pair $(x_*, \lambda_*)$ is a global saddle point of
$\mathscr{L}(\cdot)$ and $c_*(x_*, \lambda_*) \le c_*$;
\label{st:DualSol_GSP}
}

\item{if $(x_*, \lambda_*)$ is a global saddle point of $\mathscr{L}(\cdot)$, then
\[  
  f_* = \mathscr{L}(x_*, \lambda_*, c) = \inf_{x \in Q} \sup_{\lambda \in \Lambda} \mathscr{L}(x, \lambda, c) 
  = \sup_{\lambda \in \Lambda} \inf_{x \in Q} \mathscr{L}(x, \lambda, c) = \Theta_*
\]
for all $c > c_*(x_*, \lambda_*)$.
\label{st:MinMaxEqual}
}
\end{enumerate}
\end{theorem}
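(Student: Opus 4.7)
The plan is to treat the three statements in order, using the fact that assumptions $(A1)$--$(A3)$ together pin down $\sup_{\lambda \in \Lambda} \mathscr{L}(x, \lambda, c)$ completely: it equals $f(x)$ for feasible $x$ and $+\infty$ for infeasible $x$. More precisely, if $G(x) \in K$, then $(A1)$ gives $\Phi(G(x), \lambda, c) \le 0$ and $(A2)$ provides a $\lambda \in \Lambda$ at which the value is $\ge 0$, hence the supremum is exactly $f(x)$; if $G(x) \notin K$, then $(A3)$ produces $\lambda \in \Lambda$ with $\Phi(G(x), t\lambda, c) \to +\infty$, and since $\Lambda$ is a cone we have $t\lambda \in \Lambda$, so the supremum is $+\infty$. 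This observation is the workhorse for statements 1 and 3.

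For statement~\ref{st:GSP_DualSol}, I would fix $c > c_*(x_*, \lambda_*)$ and read the left inequality of \eqref{eq:GSP_def}. The finiteness clause forces $\sup_{\lambda \in \Lambda} \mathscr{L}(x_*, \lambda, c) < +\infty$, which by the dichotomy above forces $G(x_*) \in K$; the dichotomy then identifies the supremum with $f(x_*)$. Sandwiching $f(x_*) \le \mathscr{L}(x_*, \lambda_*, c) \le f(x_*)$ (the upper bound coming from $\lambda_* \in \Lambda$) yields $\mathscr{L}(x_*, \lambda_*, c) = f(x_*)$. The right inequality in \eqref{eq:GSP_def} then reads $f(x_*) \le \Theta(\lambda_*, c)$, and combining with weak duality (Proposition~\ref{prp:WeakDuality}) gives the chain $f_* \le f(x_*) \le \Theta(\lambda_*, c) \le \Theta_* \le f_*$. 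All three conclusions drop out at once.

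For statement~\ref{st:DualSol_GSP}, Proposition~\ref{prp:DualSolution_2ndComponent} (which uses only $(A7)$) together with zero duality gap gives $\Theta(\lambda_*, c) = \Theta_* = f_*$ for every $c \ge c_*$. Feasibility of $x_*$ and $(A1)$ yield $\mathscr{L}(x_*, \lambda_*, c) \le f(x_*) = f_*$, while $x_* \in Q$ gives $\mathscr{L}(x_*, \lambda_*, c) \ge \Theta(\lambda_*, c) = f_*$; hence equality. The left saddle-point inequality then follows from the dichotomy (the supremum equals $f_* = \mathscr{L}(x_*,\lambda_*,c)$), and the right one is precisely $\mathscr{L}(x_*, \lambda_*, c) = f_* = \Theta(\lambda_*, c)$. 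This holds for every $c \ge c_*$, so $c_*(x_*, \lambda_*) \le c_*$.

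For statement~\ref{st:MinMaxEqual}, the dichotomy immediately gives $\inf_{x \in Q} \sup_{\lambda \in \Lambda} \mathscr{L}(x, \lambda, c) = \inf_{x \in \Omega} f(x) = f_*$ for every $c > 0$. For the right-hand $\sup\inf$, which equals $\sup_{\lambda \in \Lambda} \Theta(\lambda, c)$ with $c > c_*(x_*, \lambda_*)$ fixed, statement~\ref{st:GSP_DualSol} tells us $(\lambda_*, c) \in \mathscr{D}_*$, so the supremum is at least $\Theta(\lambda_*, c) = \Theta_*$; monotonicity of $\Theta$ in $c$ from $(A7)$ gives the reverse bound. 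Combined with $\mathscr{L}(x_*, \lambda_*, c) = f_* = \Theta_*$ established in statement~\ref{st:GSP_DualSol}, all four quantities coincide. The main obstacle is keeping the role of the cone structure of $\Lambda$ straight when invoking $(A3)$ to rule out infeasibility of $x_*$; once that step is secured, the rest is bookkeeping with weak duality and $(A7)$.
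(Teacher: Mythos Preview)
Your proof is correct and follows essentially the same strategy as the paper's: both arguments hinge on using $(A1)$--$(A3)$ to identify $\sup_{\lambda \in \Lambda} \mathscr{L}(x,\lambda,c)$ with $f(x)$ (for feasible $x$) or $+\infty$ (otherwise), then combining this with weak duality and the saddle-point inequalities. Your packaging of this identification as a ``dichotomy'' up front is a clean touch; the paper instead invokes $(A2)$ and $(A3)$ separately at the points where they are needed. For statement~\ref{st:MinMaxEqual} you compute $\inf_{x \in Q}\sup_{\lambda}\mathscr{L}$ directly from the dichotomy, whereas the paper sandwiches it between two copies of $\mathscr{L}(x_*,\lambda_*,c)$ via the saddle-point definition --- both are valid and comparably short. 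One small redundancy: your appeal to $(A7)$ for the bound $\sup_{\lambda}\Theta(\lambda,c) \le \Theta_*$ is unnecessary, since this is immediate from the definition $\Theta_* = \sup_{\lambda,c}\Theta(\lambda,c)$; the paper in fact notes (Remark~\ref{rmrk:OptDualSol_vs_GlobalSaddlePoints}) that $(A7)$ is not used in statements~\ref{st:GSP_DualSol} and~\ref{st:MinMaxEqual}.
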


\begin{proof}
\textbf{Part~\ref{st:GSP_DualSol}.} Let $(x_*, \lambda_*)$ be a global saddle point of $\mathscr{L}(\cdot)$ and
some $c > c_*(x_*, \lambda_*)$ be fixed. Let us first show that $x_*$ is feasible. Indeed, assume that 
$G(x_*) \notin K$. Then by assumption $(A3)$ there exists a multiplier $\lambda_0 \in \Lambda$ such that 
$\Phi(G(x_*), t \lambda_0, c) \to + \infty$ as $t \to + \infty$, which contradicts the inequalities
\[
  \mathscr{L}(x_*, t \lambda_0, c) \le \sup_{\lambda \in \Lambda} \mathscr{L}(x_*, \lambda, c) 
  \le \mathscr{L}(x_*, \lambda_*, c) < + \infty
  \quad \forall c > c_*(x_*, \lambda_*)
\]
that follow from the definition of the global saddle point. Thus, $G(x_*) \in K$, that is, $x_*$ is feasible. 

By assumption $(A2)$ there exists $\widehat{\lambda} \in \Lambda$ such that 
$\Phi(G(x_*), \widehat{\lambda}, c) \ge 0$, which by the definition of global saddle point implies that
\[
  f(x_*) \le \mathscr{L}(x_*, \widehat{\lambda}, c) \le \sup_{\lambda \in \Lambda} \mathscr{L}(x_*, \lambda, c)
  \le \mathscr{L}(x_*, \lambda_*, c) \le f(x_*)
  \quad \forall c > c_*(x_*, \lambda_*),
\]
where the last inequality is valid by assumption $(A1)$. Consequently, one has
\[
  \mathscr{L}(x_*, \lambda_*, c) = f(x_*), \quad \Phi(G(x_*), \lambda_*, c) = 0 \quad \forall c > c_*(x_*, \lambda_*).
\]
Hence applying the definition of global saddle point and assumption $(A1)$ once more one gets that
\[
  f(x_*) = \mathscr{L}(x_*, \lambda_*, c) \le \inf_{x \in Q} \mathscr{L}(x, \lambda_*, c)
  \le \inf_{x \in Q \colon G(x) \in K} \mathscr{L}(x, \lambda_*, c) \le f(x)
\]
for any feasible $x$, which means that $x_*$ is a globally optimal solution of the problem $(\mathcal{P})$. 
Furthermore, one has
\[
  \Theta(\lambda_*, c) = \inf_{x \in Q} \mathscr{L}(x, \lambda_*, c) = \mathscr{L}(x_*, \lambda_*, c) = f(x_*) = f_*
  \quad \forall c > c_*(x_*, \lambda_*).
\]
Thetefore, by Proposition~\ref{prp:WeakDuality} the zero duality gap property holds true and the pair $(\lambda_*, c)$
is an optimal dual solution for any $c > c_*(x_*, \lambda_*)$.

\textbf{Part~\ref{st:DualSol_GSP}.} Let $(\lambda_*, c_*)$ and $x_*$ be globally optimal solutions of the problems
$(\mathcal{D})$ and $(\mathcal{P})$ respectively, and suppose that the zero duality gap between property holds true. Fix
any $c \ge c_*$. Then applying assumption $(A1)$ and Proposition~\ref{prp:DualSolution_2ndComponent} one obtains that
\[
  \mathscr{L}(x_*, \lambda_*, c) \le f(x_*) = f_* = \Theta_* = \Theta(\lambda_*, c) 
  = \inf_{x \in Q} \mathscr{L}(x, \lambda_*, c)
  \quad \forall c > c_*.
\]
Consequently, $\mathscr{L}(x_*, \lambda_*, c) = f(x_*)$, and applying assumption $(A1)$ once again one gets
\[
  \sup_{\lambda \in \Lambda} \mathscr{L}(x_*, \lambda, c) \le f(x_*) = \mathscr{L}(x_*, \lambda_*, c)
  = \inf_{x \in Q} \mathscr{L}(x, \lambda_*, c)
  \quad \forall c > c_*,
\]
which obviously means that $(x_*, \lambda_*)$ is a global saddle point of the augmented Lagrangian and 
$c_*(x_*, \lambda_*) \le c_*$.

\textbf{Part~\ref{st:MinMaxEqual}.} Let $(x_*, \lambda_*)$ be a global saddle point. Choose any 
$c > c_*(x_*, \lambda_*)$. From the proof of the first statement of the theorem it follows that
\[
  \mathscr{L}(x_*, \lambda_*, c) = f_* = \Theta_* = \Theta(\lambda_*, c) = \inf_{x \in Q} \mathscr{L}(x, \lambda_*, c)
  = \sup_{\lambda \in \Lambda} \inf_{x \in Q} \mathscr{L}(x, \lambda, c),
\]
where the last equality and the fact that $\Theta_* = \Theta(\lambda_*, c)$ follow from the fact that $(\lambda_*, c)$
is an optimal dual solution.

By the definition of global saddle point 
$\mathscr{L}(x_*, \lambda_*, c) = \sup_{\lambda \in \Lambda} \mathscr{L}(x_*, \lambda, c)$, which implies that
\[
  \mathscr{L}(x_*, \lambda_*, c) \ge \inf_{x \in Q} \sup_{\lambda \in \Lambda} \mathscr{L}(x, \lambda, c).
\]
On the other hand, by the same definition one also has
\[
  \mathscr{L}(x_*, \lambda_*, c) = \inf_{x \in Q} \mathscr{L}(x, \lambda_*, c) 
  \le \inf_{x \in Q} \sup_{\lambda \in \Lambda} \mathscr{L}(x, \lambda, c).
\]
Thus, $\mathscr{L}(x_*, \lambda_*, c) = \inf_{x \in Q} \sup_{\lambda \in \Lambda} \mathscr{L}(x, \lambda, c)$, and 
the proof is complete.
\end{proof}

\begin{remark} \label{rmrk:OptDualSol_vs_GlobalSaddlePoints}
Note that assumption $(A7)$ is not needed for the validity of the first and third statements of the theorem, since it is
not used in the proofs of these statements. In turn, assumptions $(A2)$ and $(A3)$ are not needed for the validity of
the second statement of the theorem.
\end{remark}

Combining the first and second statements of the previous theorem one obtains the two following useful results.

\begin{corollary} \label{crlr:GSP_vs_DualSol}
Let assumptions $(A1)$--$(A3)$ and $(A7)$ hold true. Then a global saddle point of $\mathscr{L}(\cdot)$ exists if and
only if there exist globally optimal solutions of the primal problem $(\mathcal{P})$ and the dual problem 
$(\mathcal{D})$ and the zero duality gap property holds true.
\end{corollary}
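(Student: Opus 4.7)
The plan is to deduce this corollary as a direct packaging of parts~\ref{st:GSP_DualSol} and \ref{st:DualSol_GSP} of Theorem~\ref{thrm:OptDualSol_vs_GlobalSaddlePoints}, one for each direction of the biconditional. No new machinery is needed; the work is just to extract the appropriate pieces from each implication.

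For the necessity direction, I would suppose that $(x_*, \lambda_*)$ is a global saddle point of $\mathscr{L}(\cdot)$. Applying part~\ref{st:GSP_DualSol} of Theorem~\ref{thrm:OptDualSol_vs_GlobalSaddlePoints} immediately yields that the zero duality gap property holds and that $x_*$ is a globally optimal solution of $(\mathcal{P})$; moreover, fixing any $c_* > c_*(x_*, \lambda_*)$, the same statement asserts that the pair $(\lambda_*, c_*)$ is an optimal dual solution, so $(\mathcal{D})$ also admits a globally optimal solution. This handles one direction entirely.

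For the sufficiency direction, I would assume that the zero duality gap property holds, $x_*$ is a globally optimal solution of $(\mathcal{P})$, and $(\lambda_*, c_*)$ is an optimal dual solution. Then part~\ref{st:DualSol_GSP} of Theorem~\ref{thrm:OptDualSol_vs_GlobalSaddlePoints} gives at once that $(x_*, \lambda_*)$ is a global saddle point of $\mathscr{L}(\cdot)$ (with $c_*(x_*, \lambda_*) \le c_*$), which is exactly what is needed.

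Since both directions follow by citation, there is no real obstacle; the only care needed is to confirm that the hypothesis set $(A1)$--$(A3)$ and $(A7)$ in the corollary matches what each part of the theorem requires. Part~\ref{st:GSP_DualSol} uses $(A1)$--$(A3)$ (and not $(A7)$, as noted in Remark~\ref{rmrk:OptDualSol_vs_GlobalSaddlePoints}), while part~\ref{st:DualSol_GSP} uses $(A1)$ and $(A7)$ (via Proposition~\ref{prp:DualSolution_2ndComponent}); the union of these assumption sets is precisely the list stated in the corollary, so no hidden gap arises.
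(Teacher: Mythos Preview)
Your proof is correct and follows exactly the approach the paper intends: the corollary is stated immediately after Theorem~\ref{thrm:OptDualSol_vs_GlobalSaddlePoints} as a direct combination of its first two parts, with no separate proof given. Your additional remark on matching the assumption set via Remark~\ref{rmrk:OptDualSol_vs_GlobalSaddlePoints} is a nice touch that makes explicit why the union $(A1)$--$(A3)$, $(A7)$ is precisely what is required.
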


\begin{corollary} \label{crlr:GSP_PrimalOptSol}
Let assumptions $(A1)$--$(A3)$ and $(A7)$ be valid and $(x_*, \lambda_*)$ be a global saddle point of 
$\mathscr{L}(\cdot)$. Then for any globally optimal solution $z_*$ of the problem $(\mathcal{P})$ the pair
$(z_*, \lambda_*)$ is also a global saddle point of $\mathscr{L}(\cdot)$ and 
$c_*(x_*, \lambda_*) = c_*(z_*, \lambda_*) = c_*(\lambda_*)$.
\end{corollary}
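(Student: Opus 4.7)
The plan is to apply Theorem~\ref{thrm:OptDualSol_vs_GlobalSaddlePoints} in both directions, alternating between its first two parts to transfer the saddle-point property from $(x_*, \lambda_*)$ to $(z_*, \lambda_*)$, and then using Proposition~\ref{prp:DualSolution_2ndComponent} to identify the least exact penalty parameter with the penalty map $c_*(\lambda_*)$. Since assumptions $(A1)$--$(A3)$ and $(A7)$ are in force throughout, both directions of Theorem~\ref{thrm:OptDualSol_vs_GlobalSaddlePoints} are available (noting from Remark~\ref{rmrk:OptDualSol_vs_GlobalSaddlePoints} that the second statement does not even require $(A2)$, $(A3)$).

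First I would extract from Theorem~\ref{thrm:OptDualSol_vs_GlobalSaddlePoints}(\ref{st:GSP_DualSol}) applied to the given global saddle point $(x_*, \lambda_*)$ the three facts: the zero duality gap property holds, $x_*$ is globally optimal for $(\mathcal{P})$, and $(\lambda_*, c) \in \mathscr{D}_*$ for every $c > c_*(x_*, \lambda_*)$. Now for any other globally optimal solution $z_*$ of $(\mathcal{P})$, I feed $z_*$ together with each such optimal dual solution $(\lambda_*, c)$ into Theorem~\ref{thrm:OptDualSol_vs_GlobalSaddlePoints}(\ref{st:DualSol_GSP}): this yields that $(z_*, \lambda_*)$ is a global saddle point with $c_*(z_*, \lambda_*) \le c$. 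Passing to the infimum over $c > c_*(x_*, \lambda_*)$ gives $c_*(z_*, \lambda_*) \le c_*(x_*, \lambda_*)$. The reverse inequality comes from the symmetric argument: apply part (\ref{st:GSP_DualSol}) to the newly established saddle point $(z_*, \lambda_*)$ and then part (\ref{st:DualSol_GSP}) with the primal optimum $x_*$, obtaining $c_*(x_*, \lambda_*) \le c_*(z_*, \lambda_*)$.

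It remains to show $c_*(x_*, \lambda_*) = c_*(\lambda_*)$. One direction is immediate: since $(\lambda_*, c) \in \mathscr{D}_*$ for every $c > c_*(x_*, \lambda_*)$, the definition of the penalty map yields $c_*(\lambda_*) \le c_*(x_*, \lambda_*)$. For the converse, I pick any $c > c_*(\lambda_*)$; by the definition of the infimum, there exists $c' \in (c_*(\lambda_*), c]$ with $(\lambda_*, c') \in \mathscr{D}_*$, and Proposition~\ref{prp:DualSolution_2ndComponent} then gives $(\lambda_*, c) \in \mathscr{D}_*$ as well. Using $x_*$ (which is globally optimal) and the already-established zero duality gap, Theorem~\ref{thrm:OptDualSol_vs_GlobalSaddlePoints}(\ref{st:DualSol_GSP}) produces $c_*(x_*, \lambda_*) \le c$, and the infimum over $c > c_*(\lambda_*)$ delivers $c_*(x_*, \lambda_*) \le c_*(\lambda_*)$.

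The argument is essentially bookkeeping; the only delicate point is that both $c_*(x_*, \lambda_*)$ and $c_*(\lambda_*)$ are defined as infima of open-tail conditions, so all inequalities must be obtained via strict-inequality statements about $c$ and then passed to the infimum, rather than by plugging in the infimal value itself. That is also why Proposition~\ref{prp:DualSolution_2ndComponent} is needed in the last paragraph: it lets me upgrade the existence of optimal dual pairs $(\lambda_*, c')$ at values $c'$ arbitrarily close to (but strictly above) $c_*(\lambda_*)$ into membership $(\lambda_*, c) \in \mathscr{D}_*$ for all $c > c_*(\lambda_*)$, which is what Theorem~\ref{thrm:OptDualSol_vs_GlobalSaddlePoints}(\ref{st:DualSol_GSP}) consumes. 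I do not expect any genuine obstacle beyond this.
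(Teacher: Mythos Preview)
Your proposal is correct and follows the same route the paper indicates: alternate between parts~\ref{st:GSP_DualSol} and~\ref{st:DualSol_GSP} of Theorem~\ref{thrm:OptDualSol_vs_GlobalSaddlePoints} to pass from $(x_*,\lambda_*)$ to $(z_*,\lambda_*)$ and back, then identify the common least exact penalty parameter with $c_*(\lambda_*)$ via the definition of the penalty map and Proposition~\ref{prp:DualSolution_2ndComponent}. The only minor slip is in the last paragraph, where the element $c'$ with $(\lambda_*,c')\in\mathscr{D}_*$ need not satisfy $c'>c_*(\lambda_*)$ (the infimum could be attained), but only $c'\le c$ is needed for Proposition~\ref{prp:DualSolution_2ndComponent} to apply, so this does not affect the argument.
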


Thus, the least exact penalty parameter $c_*(x_*, \lambda_*)$ does not depend on a globally optimal solution $x_*$
of the problem $(\mathcal{P})$ and is equal to the value of the penalty map $c_*(\lambda_*)$.

\begin{remark}
As was shown in \cite[Proposition~9]{Dolgopolik2018}, if the functions $f$ and $G$ are differentiable, then under some
natural assumptions on the function $\Phi$ any global saddle point of the augmented Lagrangian $\mathscr{L}(\cdot)$ is a
KKT-point of the problem $(\mathcal{P})$. This result implies that if there are two globally optimal solutions of the
problem $(\mathcal{P})$ having disjoint sets of multipliers satisfying KKT optimality conditions (note that problems
having such optimal solutions are necessarily nonconvex), then there are no global saddle points of the augmented
Lagrangian $\mathscr{L}(\cdot)$ and by Corollary~\ref{crlr:GSP_vs_DualSol} either the duality gap between the primal and
dual problems is positive or the dual problem has no globally optimal solutions. As we will show below, this fact leads
to the unboundedness of the sequence of multipliers or the sequence of penalty parameters generated by augmented
Lagrangian methods for problems having optimal solutions with disjoint sets of Lagrange multipliers.
\end{remark}

Let us give an example illustrating the previous remark.

\begin{example} \label{ex:DijointSetsOfMultipliers}
Let $X = Y = \mathbb{R}$. Consider the following optimization problem:
\begin{equation} \label{prob:DisjointMultiplierSets}
  \min \: f(x) = - x^2 \quad \text{subject to} \enspace g_1(x) = x - 1 \le 0, \enspace g_2(x) = - x - 1 \le 0.
\end{equation}
This problem has two globally optimal solutions: $1$ and $-1$. The corresponding Lagrange multipliers are
$(2, 0)$ and $(0, 2)$. Thus, the sets of Lagrange multipliers corresponding to two different globally
optimal solutions are disjoint. 

The optimal value function for problem \eqref{prob:DisjointMultiplierSets} has the form:
\begin{align*}
  \beta(p) &= \inf\Big\{ - x^2 \Bigm| x - 1 - p_1 \le 0, \: -x - 1 - p_2 \le 0 \Big\}
  \\
  &= \begin{cases}
    - \max\big\{ |1 - p_1|, |1 + p_2| \big\}^2, & \text{if } p_1 - p_2 \le 2,
    \\
    + \infty, & \text{if } p_1 - p_2 > 2.
  \end{cases}
\end{align*}
The function $\beta$ is obviously continuous at the origin. Therefore, under the assumptions of 
Theorem~\ref{thrm:ZeroDualityGapCharacterization} the zero duality gap property holds true. In particular, it holds true
for the Hestenes-Powell-Rockafellar augmented Lagrangian for problem \eqref{prob:DisjointMultiplierSets}:
\begin{equation} \label{eq:HPRLagr_DisjMultSetsProblem}
\begin{split}
  \mathscr{L}(x, \lambda, c) = - x^2 &+ \lambda_1 \max\left\{ x - 1, -\frac{\lambda_1}{c} \right\}
  + \frac{c}{2} \max\left\{ x - 1, -\frac{\lambda_1}{c} \right\}^2
  \\
  &+ \lambda_2 \max\left\{ -x - 1, -\frac{\lambda_2}{c} \right\}
  + \frac{c}{2} \max\left\{ -x - 1, -\frac{\lambda_2}{c} \right\}^2.
\end{split}
\end{equation}
However, the corresponding augmented dual problem has no globally optimal solutions. 

Indeed, if an optimal dual solution $(\lambda_*, c_*)$ exists, then by
Theorem~\ref{thrm:OptDualSol_vs_GlobalSaddlePoints} for any globally optimal solution $x_*$ of problem
\eqref{prob:DisjointMultiplierSets} the pair $(x_*, \lambda_*)$ is a global saddle point of the augmented Lagrangian and
$c_*(x_*, \lambda_*) \le c_*$. Therefore by the third statement of Theorem~\ref{thrm:OptDualSol_vs_GlobalSaddlePoints}
and the definition of global saddle point for any $c > c_*$ one has
\begin{equation} \label{eq:DijointMultipliers_GSP_equal}
  f_* = -1 = \mathscr{L}(1, \lambda_*, c) = \mathscr{L}(-1, \lambda_*, c) 
  = \inf_{x \in \mathbb{R}} \mathscr{L}(x, \lambda_*, c).
\end{equation}
Hence applying assumption $(A1)$ one gets that
\begin{align*}
  0 &= \Phi(G(1), \lambda_*, c) = (\lambda_*)_2 \max\left\{ -2, -\frac{(\lambda_*)_2}{c} \right\}
  + \frac{c}{2} \max\left\{ -2, -\frac{(\lambda_*)_2}{c} \right\}^2
  \\
  0 &= \Phi(G(-1), \lambda_*, c) = (\lambda_*)_1 \max\left\{ -2, -\frac{(\lambda_*)_1}{c} \right\}
  + \frac{c}{2} \max\left\{ -2, -\frac{(\lambda_*)_1}{c} \right\}^2 
\end{align*}
Clearly, there exists $c_0 > 0$ such that $(\lambda_*)_1 / c < 2$ and $(\lambda_*)_2 / c < 2$ for any $c \ge c_0$. 
Therefore, by the equalities above for any $c \ge c_0$ one has 
\[
  0 = - \frac{(\lambda_*)_1^2}{2 c}, \quad 0 = - \frac{(\lambda_*)_2^2}{2 c}
\] 
or, equivalently, $\lambda_* = 0$. However, as one can easily check,
\[
  \inf_{x \in \mathbb{R}} \mathscr{L}(x, \lambda_*, c) = \inf_{x \in \mathbb{R}} \mathscr{L}(x, 0, c)
  = - 1 - \frac{2}{c - 2} < f_* \quad \forall c > 2,
\]
which contradicts \eqref{eq:DijointMultipliers_GSP_equal}. Thus, the augmented dual problem has no globally optimal 
solutions. In the following section we will show how a standard primal-dual augmented Lagrangian method behaves for 
problem~\eqref{prob:DisjointMultiplierSets} (see Example~\ref{ex:LackOfDualSol_vs_DualConvergence}).
\end{example}

Another object in the augmented duality theory, that is directly connected with optimal dual solutions and global saddle
points, is \textit{augmented Lagrange multiplier}, which we introduce below by analogy with the theory of 
Rockafel\-lar-Wets' augmented Lagrangians 
\cite{ShapiroSun,RuckmannShapiro,ZhouZhouYang2014,BurachikYangZhou,Dolgopolik2017}.

\begin{definition}
A vector $\lambda_* \in \Lambda$ is called an \textit{augmented Lagrange multiplier} (of the augmented Lagrangian
$\mathscr{L}(\cdot)$), if there exists $x_* \in Q$ such that the pair $(x_*, \lambda_*)$ is a global saddle
point of the augmented Lagrangian. The set of all augmented Lagrange multipliers is denoted by $\mathscr{A}_*$.
\end{definition}

\begin{remark}
{(i)~Our definition of the augmented Lagrange multiplier is equivalent to the one used in the context of
Rockafellar-Wets' augmented Lagrangians by \cite[Theorem~2.1]{ShapiroSun}.}

{(ii)~By Theorem~\ref{thrm:OptDualSol_vs_GlobalSaddlePoints}, in the case when the zero duality gap property is
satisfied and there exists a globally optimal solution of the primal problem, a vector $\lambda_* \in \Lambda$ is an
augmented Lagrange multiplier if and only if there exists $c_* > 0$ such that $(\lambda_*, c_*)$ is an optimal dual
solution. Consequently, $\mathscr{A}_* = \dom c_*(\cdot)$.
}
\end{remark}

Let us point out some interesting properties of augmented Lagrange multipliers.

\begin{proposition} \label{prp:AugmLagrMult}
Let assumption $(A1)$ and the zero duality gap property hold true, and there exist a globally optimal solution of 
the problem $(\mathcal{P})$. Then a vector $\lambda_* \in \Lambda$ is an augmented Lagrange multiplier if and only if
there exists $c_* > 0$ such that 
\begin{equation} \label{eq:AugmLagrMultEquality}
  \Theta(\lambda_*, c_*) := \inf_{x \in Q} \mathscr{L}(x, \lambda_*, c_*) = f_*.
\end{equation}
Furthermore, if this equality and assumptions $(A2)$, $(A3)$, and $(A7)$ are satisfied, then the following statements
hold true:
\begin{enumerate}
\item{$\Theta(\lambda_*, c) = f_*$ for all $c \ge c_*$;}

\item{the infimum of all $c_*$ for which \eqref{eq:AugmLagrMultEquality} holds true is equal to $c_*(\lambda_*)$;
}

\item{for all $c_* > c_*(\lambda_*)$ the infimum in \eqref{eq:AugmLagrMultEquality} is attained at every globally
optimal solution of the problem $(\mathcal{P})$;
}

\item{if the function $c \mapsto \Phi(y, \lambda_*, c)$ is strictly increasing on $\dom \Phi(y, \lambda_*, \cdot)$ for
any $y \notin K$ and on $T(y) := \{ c \in (0, + \infty) \colon - \infty < \Phi(y, \lambda_*, c) < 0 \}$ for any 
$y \in K$, then for all $c_* > c_*(\lambda_*)$ the infimum in \eqref{eq:AugmLagrMultEquality} is attained at some 
$x \in Q$ if and only if $x$ is a globally optimal solution of the problem $(\mathcal{P})$.
}
\end{enumerate}
\end{proposition}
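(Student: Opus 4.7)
The plan is to translate equation~\eqref{eq:AugmLagrMultEquality} into membership of $(\lambda_*, c_*)$ in the set $\mathscr{D}_*$ of optimal dual solutions via weak duality and the zero duality gap hypothesis, and then invoke Theorem~\ref{thrm:OptDualSol_vs_GlobalSaddlePoints} together with its corollaries to move between optimal dual solutions, global saddle points, and augmented Lagrange multipliers.

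For the main equivalence, I would first observe that by Proposition~\ref{prp:WeakDuality} one has $\Theta(\lambda_*, c_*) \le f_*$, while zero duality gap gives $\Theta_* = f_*$; hence \eqref{eq:AugmLagrMultEquality} holds for some $c_* > 0$ if and only if $(\lambda_*, c_*) \in \mathscr{D}_*$ for that $c_*$. The forward direction then follows from Theorem~\ref{thrm:OptDualSol_vs_GlobalSaddlePoints}, Part~\ref{st:GSP_DualSol}: if $(x_*, \lambda_*)$ is a global saddle point, then $(\lambda_*, c) \in \mathscr{D}_*$ for every $c > c_*(x_*, \lambda_*)$. The reverse direction follows from Part~\ref{st:DualSol_GSP}: given $(\lambda_*, c_*) \in \mathscr{D}_*$, pair $\lambda_*$ with any global optimum of $(\mathcal{P})$, which exists by hypothesis, to produce a global saddle point.

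Statement~1 will follow from assumption $(A7)$, which makes $\Theta(\lambda_*, \cdot)$ non-decreasing, combined with the weak duality bound $\Theta(\lambda_*, c) \le f_*$. Statement~2 is then a reformulation: the set of $c > 0$ satisfying \eqref{eq:AugmLagrMultEquality} coincides with $\{ c > 0 \mid (\lambda_*, c) \in \mathscr{D}_* \}$, whose infimum is $c_*(\lambda_*)$ by definition of the penalty map. For Statement~3, I would invoke Corollary~\ref{crlr:GSP_PrimalOptSol}: since $(\lambda_*, c_*) \in \mathscr{D}_*$ whenever $c_* > c_*(\lambda_*)$, any global optimum $z_*$ yields a global saddle point $(z_*, \lambda_*)$, and the saddle-point inequalities force $\mathscr{L}(z_*, \lambda_*, c_*) = \Theta(\lambda_*, c_*) = f_*$.

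The main obstacle lies in the nontrivial direction of Statement~4: from $\mathscr{L}(x, \lambda_*, c_*) = f_*$ together with $c_* > c_*(\lambda_*)$ and the strict monotonicity hypothesis I must deduce that $x$ is a global optimum. The key idea is to exploit the slack below $c_*$: pick any $c'' \in (c_*(\lambda_*), c_*)$, which is nonempty by the strict inequality. Since $c'' > c_*(\lambda_*)$, Proposition~\ref{prp:DualSolution_2ndComponent} gives $(\lambda_*, c'') \in \mathscr{D}_*$, hence $\Theta(\lambda_*, c'') = f_*$ and $\mathscr{L}(x, \lambda_*, c'') \ge f_*$; in particular $\Phi(G(x), \lambda_*, c'')$ is finite. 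I then split into cases. If $G(x) \notin K$, strict monotonicity on $\dom \Phi(G(x), \lambda_*, \cdot)$ yields $\Phi(G(x), \lambda_*, c'') < \Phi(G(x), \lambda_*, c_*)$, hence $\mathscr{L}(x, \lambda_*, c'') < f_*$, contradicting the lower bound. If $G(x) \in K$ but $\Phi(G(x), \lambda_*, c_*) < 0$, then $c_*, c'' \in T(G(x))$ by $(A7)$ and finiteness, and the same contradiction follows from strict monotonicity on $T(G(x))$. The only remaining possibility is $G(x) \in K$ with $\Phi(G(x), \lambda_*, c_*) = 0$, which gives $f(x) = f_*$; the \textquotedblleft if\textquotedblright\ part of Statement~4 is immediate from Statement~3.
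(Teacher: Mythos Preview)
Your proposal is correct and follows essentially the same route as the paper: both translate \eqref{eq:AugmLagrMultEquality} into optimality of $(\lambda_*,c_*)$ for the dual problem via weak duality and zero duality gap, then invoke Theorem~\ref{thrm:OptDualSol_vs_GlobalSaddlePoints} for the equivalence, use monotonicity in $c$ for Statement~1, the definition of the penalty map for Statement~2, the saddle-point inequalities for Statement~3, and for Statement~4 pick an intermediate $c'' \in (c_*(\lambda_*), c_*)$ to derive a strict-monotonicity contradiction unless $G(x)\in K$ and $\Phi(G(x),\lambda_*,c_*)=0$. Your treatment of Statement~4 is in fact a bit more explicit than the paper's about why $c''$ and $c_*$ lie in the relevant domain or in $T(G(x))$, but the argument is otherwise identical.
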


\begin{proof}
\textbf{Part 1.} Let $\lambda_*$ be an augmented Lagrange multiplier. Then by
Theorem~\ref{thrm:OptDualSol_vs_GlobalSaddlePoints} there exists $c_* > 0$ such that $(\lambda_*, c_*)$ is an optimal
dual solution. Hence with the use of the fact that the zero duality gap property holds true one can conclude that
equality \eqref{eq:AugmLagrMultEquality} is valid.

Suppose now that equality \eqref{eq:AugmLagrMultEquality} is satisfied for some $\lambda_* \in \Lambda$ and $c_* > 0$.
Then by Proposition~\ref{prp:WeakDuality} the pair $(\lambda_*, c_*)$ is an optimal dual solution, which by
Theorem~\ref{thrm:OptDualSol_vs_GlobalSaddlePoints} implies that $\lambda_*$ is an augmented Lagrange multiplier.

\textbf{Part 2.1.} Suppose that that equality \eqref{eq:AugmLagrMultEquality} is satisfied for some 
$\lambda_* \in \Lambda$ and $c_* > 0$, and assumptions $(A2)$, $(A3)$, and $(A7)$ hold true. Then, as was noted
earlier, the function $\Theta(\lambda, c)$ is non-decreasing in $c$, which along with Proposition~\ref{prp:WeakDuality}
imply that $\Theta(\lambda_*, c) = f_*$ for all $c \ge c_*$.

\textbf{Part 2.2.} The fact that the infimum of all $c_*$ for which \eqref{eq:AugmLagrMultEquality} holds true is equal
to $c_*(\lambda_*)$ follows directly from the definition of the penalty map.

\textbf{Part 2.3.} If a pair $(\lambda_*, c_*)$ satisfies equality \eqref{eq:AugmLagrMultEquality}, then it is an
optimal dual solution. Consequently, by Theorem~\ref{thrm:OptDualSol_vs_GlobalSaddlePoints} for any globally optimal
solution $x_*$ of the problem $(\mathcal{P})$ the pair $(x_*, \lambda_*)$ is a global saddle point of the augmented
Lagrangian. By the definition of global saddle point it means that the infimum in \eqref{eq:AugmLagrMultEquality} is
attained at $x_*$ (see \eqref{eq:GSP_def}).

\textbf{Part 2.4.} Suppose finally that the function $c \mapsto \Phi(y, \lambda_*, c)$ is strictly increasing on
$\dom \Phi(y, \lambda_*, \cdot)$ for any $y \notin K$ and on $T(y)$ for any $y \in K$, and the infimum in
\eqref{eq:AugmLagrMultEquality} is attained at some $x \in Q$. If $x$ is feasible and $\Phi(G(x), \lambda_*, c_*) < 0$
or $x$ is infeasible, then for any $c_*(\lambda_*) < c < c_*$ by our assumption on the function $\Phi$
one has $f_* = \mathscr{L}(x, \lambda_*, c_*) > \mathscr{L}(x, \lambda_*, c) \ge f_*$, which is obviously impossible.
Therefore, $x$ is feasible and $\Phi(G(x), \lambda_*, c_*) = 0$, which means that $f(x) = f_*$, that is, $x_*$ is a
globally optimal solution of the problem $(\mathcal{P})$.
\end{proof}

Thus, if $\lambda_* \in \Lambda$ is an augmented Lagrange multiplier, then under some additional assumptions the problem
$(\mathcal{P})$ has the same optimal value and the same globally optimal solutions as the problem 
\[
  \min_x \: \mathscr{L}(x, \lambda_*, c) \quad \text{subject to } x \in Q
\]
for any $c > c_*(\lambda_*)$. That is why in \cite{RockafellarWets} augmented Lagrange multipliers were called
multipliers supporting an \textit{exact penalty representation} (see \cite[Definition~11.60]{RockafellarWets} and
\cite{HuangYang2003,HuangYang2005,ZhouYang2009,BurachikIusemMelo,WangYangYang,ZhouYang2012,BurachikIusemMelo2015}).

\begin{remark}
The assumption that the function $c \mapsto \Phi(y, \lambda_*, c)$ is strictly increasing on 
$\dom \Phi(y, \lambda_*, \cdot)$ for any $y \notin K$ and on $T(y)$ for any $y \in K$ is very mild and satisfied in many
particular cases. For example, it is satisfied \textit{for any} $\lambda_* \in \Lambda$ for Rockafellar-Wets'
augmented Lagrangian (Example~\ref{ex:RockafellarWetsAugmLagr}), provided the function $\sigma$ has 
a valley at zero and is continuous at this point, the Hestenes-Powell-Rockafellar augmented Lagrangian
(Examples~\ref{ex:HestenesPowellAugmLagr}, \ref{ex:HPR_AugmLagr_2OrderCone}, \ref{ex:HPR_AugmLagr_SemiDef},
\ref{ex:HPR_AugmLagr_Pointwise}), the sharp Lagrangian (Example~\ref{ex:SharpLagrangian_eq}), Mangasarian's  augmented
Lagrangian(Examples~\ref{ex:MangasarianAugmLagr_eq} and \ref{ex:MangasarianAugmLagr_ineq}), the essentially quadratic
augmented Lagrangian (Example~\ref{ex:HPR_AugmLagr}), the cubic augmented Lagrangian (Example~\ref{ex:CubicAugmLagr}),
the penalized exponential-type augmented Lagrangian (Examples~\ref{ex:PenalizedExpTypeAugmLagr} and
\ref{ex:PenalizedExpTypeAugmLagr_SemiDef}), provided the function $\xi$ is strictly convex on $(0, + \infty)$, and
He-Wu-Meng's augmented Lagrangian (Example~\ref{ex:HeWuMeng}). This assumption is also satisfied for any $\lambda_* \in
\Lambda$ that does not have zero components for the exponential-type augmented Lagrangian
(Example~\ref{ex:ExpTypeAugmLagr}), the hyperbolic-type augmented Lagrangian (Example~\ref{eq:HyperbolicAugmLagr}), and
the modified barrier function (Example~\ref{ex:ModifiedBarrierFunction}), provided the function $\phi$ is strictly
convex in these examples, and for the p-th power augmented Lagrangian (Example~\ref{ex:pthPowerAugmLagr}).
\end{remark}

\begin{remark}
To the best of the author's knowledge, the penalty map, augmented Lagrange multipliers, and an exact penalty
representation have been introduced and studied earlier only in the context of Rockafellar-Wets' augmented Lagrangians.
In this section we demonatrated (see Corollary~\ref{crlr:OptDualSolSet_ExactPenaltyMap} and
Proposition~\ref{prp:AugmLagrMult}) that there is nothing specific in these concepts that is inherently connected to
Rockafellar-Wets' augmented Lagrangians. They can be naturally introduced and studied for any other augmented
Lagrangian, including the (penalized) exponential-type augmented Lagrangian, modified barrier functions, Mangasarian's
augmented Lagrangian, etc., that are typically not considered in the theory of Rockafellar-Wets' augmented Lagrangians.
\end{remark}

\subsection{Optimal dual solutions for convex problems}

In the case when the problem $(\mathcal{P})$ is convex, optimal dual solutions, roughly speaking, do not depend on 
the penalty parameter $c$ (more precisely, the penalty map $c_*(\lambda_*)$ does not depend on 
$\lambda_* \in \mathscr{A}_*$), and one can give a very simple (and well-known) description of the set of optimal dual
solutions in terms of \textit{Lagrange multipliers}.

Let the function $f$ and the set $Q$ be convex, and the mapping $G$ be convex with respect to binary relation $\preceq$,
that is,
\[
  G(\alpha x_1 + (1 - \alpha) x_2) \preceq \alpha G(x_1) + (1 - \alpha) G(x_2) 
  \quad \forall x_1, x_2 \in X, \: \alpha \in [0, 1].
\]
Then the problem $(\mathcal{P})$ is convex. Moreover, in this case under assumption $(A8)$ the augmented Lagrangian
$\mathscr{L}(x, \lambda, c)$ is convex in $x$. Indeed, by applying first the fact that $\Phi(\cdot, \lambda, c)$ is
non-decreasing with respect to $\preceq$ and then the fact that this function is convex one obtains that for any 
$x_1, x_2 \in X$ and $\alpha \in [0, 1]$ the following inequalities hold true:
\begin{align*}
  \mathscr{L}(\alpha x_1 &+ (1 - \alpha) x_2, \lambda, c) 
  = f(\alpha x_1 + (1 - \alpha) x_2) + \Phi(G(\alpha x_1 + (1 - \alpha) x_2), \lambda, c)
  \\
  &\le \alpha f(x_1) + (1 - \alpha) f(x_2) + \Phi(\alpha G(x_1) + (1 - \alpha) G(x_2), \lambda, c)
  \\
  &\le \alpha f(x_1) + (1 - \alpha) f(x_2) + \alpha \Phi(G(x_1), \lambda, c) + (1 - \alpha) \Phi(G(x_2), \lambda, c)
  \\
  &= \alpha \mathscr{L}(x_1, \lambda, c) + (1 - \alpha) \mathscr{L}(x_2, \lambda, c),
\end{align*}
which means that the function $\mathscr{L}(\cdot, \lambda, c)$ is convex for any $\lambda \in \Lambda$ and $c > 0$
(in the case when $\Phi(\alpha G(x_1) + (1 - \alpha) G(x_2), \lambda, c) = - \infty$, instead of the inequalities above
one should apply \cite[Theorem~I.4.2]{Rockafellar}).

Denote by $L(x, \lambda) = f(x) + \langle \lambda, G(x) \rangle$ the standard Lagrangian for the problem 
$(\mathcal{P})$, where $\lambda \in K^*$. It is easily seen that the function $L(\cdot, \lambda)$ is convex. Recall that
a vector $\lambda_* \in K^*$ is called \textit{a Lagrange multiplier} of the problem $(\mathcal{P})$ at a feasible
point $x_*$, if $0 \in \partial_x L(x_*, \lambda_*) + N_Q(x_*)$ and $\langle \lambda_*, G(x_*) \rangle = 0$, where 
$\partial_x L(x_*, \lambda_*)$ is the subdifferential of the function $L(\cdot, \lambda_*)$ at $x_*$ in the sense of
convex analysis and $N_Q(x_*) = \{ x^* \in X^* \mid \langle x^*, x - x_* \rangle \le 0 \: \forall x \in Q \}$ is 
the normal cone to the set $Q$ at $x_*$ (see, e.g. \cite[Definition~3.5]{BonnansShapiro}). The existence of a Lagrange
multiplier at $x_*$ is a sufficient, and in the case when $0 \in \interior (G(Q) - K)$ necessary, global optimality
condition, and the set $\Lambda_*$ of Lagrange multipliers of the problem $(\mathcal{P})$ is a nonempty, convex,
weak${}^*$ compact set that does not depend on an optimal solution $x_*$ (see \cite[Theorem~3.6]{BonnansShapiro}).
Furthermore, $\lambda_*$ is a Lagrange multiplier at $x_*$ if and only if $(x_*, \lambda_*)$ is a saddle point of 
the Lagrangian $L(\cdot)$:
\[
  \sup_{\lambda \in K^*} L(x_*, \lambda) \le L(x_*, \lambda_*) \le \inf_{x \in Q} L(x, \lambda_*).
\]
Note finally that if $L(\cdot, \lambda_*)$ is directionally differentiable at $x_*$, then $\lambda_*$ is a Lagrange
multiplier at $x_*$ if and only if $[L(\cdot, \lambda_*)]'(x_*, h) \ge 0$ for all $h \in T_Q(x_*)$ and 
$\langle \lambda_*, G(x_*) \rangle = 0$, where $[L(\cdot, \lambda_*)]'(x_*, h)$ is the directional derivative of
$L(\cdot, \lambda_*)$ at $x_*$ in the direction $h$, and $T_Q(x_*)$ is the contingent cone to the set $Q$ at the point
$x_*$ (cf. \cite[Lemma~3.7]{BonnansShapiro}).

Let us now present a complete characterisation of the set optimal dual solutions in terms of Lagrange multipliers in 
the convex case. Roughly speaking, this result shows that under suitable assumptions there is essentially no difference
between standard duality theory, based on the Lagrangian $L(\cdot)$, and augmented duality theory, based on 
the augmented Lagrangian $\mathscr{L}(\cdot)$. For the sake of simplicity we will prove this result under the assumption
that the functions $f$ and $G$ are directionally differentiable at a globally optimal solution of the problem
$(\mathcal{P})$, although in various particular cases (e.g. in the case of inequality constrained problems) this result
can be proved without this assumption.

\begin{theorem} \label{thrm:OptDualSol_ConvexCase}
Let the following conditions hold true:
\begin{enumerate}
\item{$f$ and $Q$ are convex, $G$ is convex with respect to the binary relation $\preceq$;}

\item{assumptions $(A1)$, $(A4)$, $(A5)$, $(A7)$, $(A8)$, and $(A11)$ hold true;}

\item{$K^* \subseteq \Lambda$;} 

\item{$f$ and $G$ are directionally differentiable at a globally optimal solution $x_*$ of the problem 
$(\mathcal{P})$.
}
\end{enumerate}
Then a Lagrange multiplier of the problem $(\mathcal{P})$ exists if and only if the zero duality gap property holds true
and there exists a globally optimal solution $(\lambda_*, c_*)$ of the augmented dual problem $(\mathcal{D})$ with
$\lambda_* \in K^*$. 

Moreover, if a Lagrange multiplier of the problem $(\mathcal{P})$ exists, then $(\lambda_*, c_*)$ with 
$\lambda_* \in K^*$ is an optimal dual solution if and only if $\Phi_0(\lambda_*)$ is a Lagrange multiplier of 
the problem $(\mathcal{P})$ and $c_* > 0$, where $\Phi_0$ is from assumption $(A11)$.
\end{theorem}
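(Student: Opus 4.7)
The plan is to reduce everything to a correspondence between Lagrange multipliers of the classical Lagrangian $L$ and global saddle points of the augmented Lagrangian $\mathscr{L}$, and then invoke Theorem~\ref{thrm:OptDualSol_vs_GlobalSaddlePoints} to translate global saddle points into optimal dual solutions. The bridge between these two worlds is the mapping $\Phi_0 \colon K^* \to K^*$ supplied by assumption $(A11)$, which is surjective by hypothesis and satisfies $\langle \Phi_0(\lambda), y \rangle = 0 \Leftrightarrow \langle \lambda, y \rangle = 0$. The convexity observation preceding the theorem ensures that $\mathscr{L}(\cdot, \lambda, c)$ is convex on $Q$ for every $\lambda \in \Lambda$ and $c > 0$, so global minimality at $x_*$ will reduce to a first-order directional-derivative condition.

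First I would prove the forward direction of both assertions simultaneously. Given a Lagrange multiplier $\mu_* \in K^*$ of $(\mathcal{P})$ at $x_*$, pick any $\lambda_* \in K^*$ with $\Phi_0(\lambda_*) = \mu_*$ using surjectivity. Complementary slackness $\langle \mu_*, G(x_*) \rangle = 0$ together with $(A11)$ yields $\langle \lambda_*, G(x_*) \rangle = 0$, and then $(A4)$ gives $\Phi(G(x_*), \lambda_*, c) = 0$ for every $c > 0$, so $\mathscr{L}(x_*, \lambda_*, c) = f(x_*)$. Combined with $(A1)$ this is the upper saddle-point inequality $\sup_{\lambda \in \Lambda} \mathscr{L}(x_*, \lambda, c) \le f(x_*) = \mathscr{L}(x_*, \lambda_*, c)$. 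For the lower inequality I would use convexity of $\mathscr{L}(\cdot, \lambda_*, c)$, so that minimality of $x_*$ on $Q$ is equivalent to $[\mathscr{L}(\cdot, \lambda_*, c)]'(x_*, h) \ge 0$ on $T_Q(x_*)$. Since $\Phi(\cdot, \lambda_*, c)$ is Fréchet differentiable at $G(x_*)$ by $(A11)$ with derivative $\Phi_0(\lambda_*) = \mu_*$, the chain rule gives $[\mathscr{L}(\cdot, \lambda_*, c)]'(x_*, h) = f'(x_*, h) + \langle \mu_*, G'(x_*, h) \rangle = [L(\cdot, \mu_*)]'(x_*, h)$, which is nonnegative on $T_Q(x_*)$ because $\mu_*$ is a Lagrange multiplier. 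Hence $(x_*, \lambda_*)$ is a global saddle point of $\mathscr{L}$ with $c_*(x_*, \lambda_*) = 0$, and Theorem~\ref{thrm:OptDualSol_vs_GlobalSaddlePoints}(\ref{st:GSP_DualSol}) delivers zero duality gap and the optimality of $(\lambda_*, c)$ for every $c > 0$. This proves existence of an optimal dual solution in claim~1 and the ``$\Leftarrow$'' part of claim~2.

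Next I would prove the converse of both claims. Suppose $(\lambda_*, c_*)$ is an optimal dual solution with $\lambda_* \in K^*$ and the zero duality gap property holds. By Theorem~\ref{thrm:OptDualSol_vs_GlobalSaddlePoints}(\ref{st:DualSol_GSP}) the pair $(x_*, \lambda_*)$ is a global saddle point of $\mathscr{L}$ for every $c > c_*$, so $\mathscr{L}(x_*, \lambda_*, c) = f_* = f(x_*)$, whence $\Phi(G(x_*), \lambda_*, c) = 0$. Since $G(x_*) \in K$ and $\lambda_* \in K^*$, $(A5)$ forbids $\langle \lambda_*, G(x_*) \rangle \ne 0$, so complementary slackness $\langle \lambda_*, G(x_*) \rangle = 0$ holds and $(A11)$ transfers this to $\langle \Phi_0(\lambda_*), G(x_*) \rangle = 0$. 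The saddle-point inequality means $x_*$ minimizes the convex function $\mathscr{L}(\cdot, \lambda_*, c)$ on $Q$, so $[\mathscr{L}(\cdot, \lambda_*, c)]'(x_*, h) \ge 0$ on $T_Q(x_*)$, which by the same chain rule as before reads $[L(\cdot, \Phi_0(\lambda_*))]'(x_*, h) \ge 0$. Together with complementary slackness and the characterisation of Lagrange multipliers via directional derivatives recalled before the theorem, this shows that $\Phi_0(\lambda_*) \in K^*$ is a Lagrange multiplier, finishing the ``$\Rightarrow$'' direction of claim~2 and the remaining direction of claim~1.

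The main obstacle is the clean chain-rule computation interchanging the first-order optimality condition for $\mathscr{L}(\cdot, \lambda_*, c)$ with the first-order optimality condition for $L(\cdot, \Phi_0(\lambda_*))$; this is the only place where all of $(A11)$'s content is used, but it works transparently because $\Phi(\cdot, \lambda_*, c)$ is \emph{Fréchet} differentiable at $G(x_*)$, which upgrades directional differentiability of $G$ at $x_*$ to directional differentiability of the composition with the claimed formula.
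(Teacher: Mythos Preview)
Your proposal is correct and follows essentially the same route as the paper: both directions are obtained by transferring first-order optimality between $L(\cdot,\Phi_0(\lambda_*))$ and $\mathscr{L}(\cdot,\lambda_*,c)$ via the chain rule supplied by $(A11)$, using $(A4)$ and $(A5)$ to handle complementary slackness, convexity from $(A8)$ to upgrade the directional condition to global minimality, and Theorem~\ref{thrm:OptDualSol_vs_GlobalSaddlePoints} (with Remark~\ref{rmrk:OptDualSol_vs_GlobalSaddlePoints}, since $(A2)$ and $(A3)$ are not assumed here) to pass between global saddle points and optimal dual solutions. The only cosmetic difference is that the paper argues $\Theta(\lambda_*,c)=f_*$ directly rather than explicitly naming the saddle-point inequalities, and your variable names $\mu_*,\lambda_*$ are swapped relative to the paper's.
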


\begin{proof}
Suppose that a Lagrange multiplier $\lambda_* \in K^*$ exists. Then, as was noted above, 
$[L(\cdot, \lambda_*)]'(x_*, h) \ge 0$ for all $h \in T_Q(x_*)$. By assumption $(A11)$ the function 
$\Phi(\cdot, \lambda, c)$ is Fr\'{e}chet differentiable and its Fr\'{e}chet derivative 
$D_y \Phi(y, \lambda, c) = \Phi_0(\lambda)$ is a surjective mapping from $K^*$ onto $K^*$. Therefore, there exists
$\mu_* \in K^*$ such that $\Phi_0(\mu_*) = \lambda_*$. Applying the chain rule for directional derivatives one gets
\[
  [\mathscr{L}(\cdot, \mu_*, c)]'(x_*, h) = f'(x_*, h) + \langle \Phi_0(\mu_*), G'(x_*, h) \rangle 
  = [L(\cdot, \lambda_*)]'(x_*, h) \ge 0
\]
for any $c > 0$ and $h \in T_Q(x_*)$. As was noted above, under the assumptions of the theorem the function 
$\mathscr{L}(\cdot, \mu_*, c)$ is convex. Consequently, the inequality above implies that $x_*$ is a point of 
global minimum of $\mathscr{L}(\cdot, \mu_*, c)$ on the set $Q$. Recall that $\langle \lambda_*, G(x_*) \rangle = 0$, 
since $\lambda_*$ is a Lagrange multiplier. Hence by assumption $(A11)$ one has $\langle \mu_*, G(x_*) \rangle = 0$,
and with the use of assumption $(A4)$ one gets
\[
  f_* = f(x_*) = \mathscr{L}(x_*, \mu_*, c) = \inf_{x \in Q} \mathscr{L}(x, \mu_*, c) = \Theta(\mu_*, c)
  \quad \forall c > 0,
\]
which by Proposition~\ref{prp:WeakDuality} means that the zero duality gap property is satisfied and $(\mu_*, c)$ with
any $c > 0$ is an optimal dual solution.

Suppose now that the zero duality gap property holds true and there exists an optimal solution $(\mu_*, c_*)$ of the
problem $(\mathcal{D})$ with $\mu_* \in K^*$. Then by Theorem~\ref{thrm:OptDualSol_vs_GlobalSaddlePoints} (see also
Remark~\ref{rmrk:OptDualSol_vs_GlobalSaddlePoints}) the pair $(x_*, \mu_*)$ is a global saddle point of the augmented
Lagrangian and $c_*(\mu_*) \le c_*$. Therefore, by the definition of global saddle point $x_*$ is a point of global
minimum of $\mathscr{L}(\cdot, \mu_*, c)$ on the set $Q$ and 
\begin{equation} \label{eq:GSP_vs_PrimalDualOptVal}
  \mathscr{L}(x_*, \mu_*, c) = \Theta(\mu_*, c) = \Theta(\mu_*, c_*) = \Theta_* = f_* \quad \forall c > c_*
\end{equation}
(here we used Proposition~\ref{prp:DualSolution_2ndComponent}). Hence with the use of assumption $(A11)$ one obtains
that
\[
  0 \le [\mathscr{L}(\cdot, \mu_*, c)]'(x_*, h) = f'(x_*, h) + \langle \Phi_0(\mu_*), G'(x_*, h) \rangle 
  = [L(\cdot, \lambda_*)]'(x_*, h) 
\]
for any $h \in T_Q(x_*)$, where $\lambda_* = \Phi_0(\mu_*)$. Moreover, $\langle \mu_*, G(x_*) \rangle = 0$ and,
therefore, $\langle \lambda_*, G(x_*) \rangle = 0$ by assumption $(A11)$, 
since otherwise by assumption $(A5)$ one has $\mathscr{L}(x_*, \mu_*, c) < f(x_*) = f_*$, which contradicts 
\eqref{eq:GSP_vs_PrimalDualOptVal}. Thus, $\lambda_*$ is a Lagrange multiplier.

It remains to note that, as was shown above, if $\lambda_*$ is a Lagrange multiplier, then for any $c > 0$ and 
$\mu_* \in K^*$ such that $\Phi_0(\mu_*) = \lambda_*$ the pair $(\mu_*, c)$ is an optimal dual solution. 
Conversely, if $(\mu_*, c_*)$ with $\mu_* \in K^*$ is an optimal dual solution, then $\lambda_* = \Phi_0(\mu_*)$ 
is a Lagrange multiplier.
\end{proof}

\begin{corollary} \label{crlr:LagrangeMultipliersVsOptDualSol_ConvexCase}
Let $f$ and $Q$ be convex, $G$ be convex with respect to the binary relation $\preceq$, $f$ and $G$ be directionally
differentiable at an optimal solution $x_*$ of the problem $(\mathcal{P})$, $K^* \subseteq \Lambda$, and suppose that
assumptions $(A1)$--$(A8)$ and $(A11)$ hold true. Then a Lagrange multiplier of the problem $(\mathcal{P})$ exists if 
and only if the zero duality gap property holds true and there exists an optimal dual solution. Moreover, if a Lagrange
multiplier of the problem $(\mathcal{P})$ exists, then $(\lambda_*, c_*)$ is a globally optimal solution of the dual
problem $(\mathcal{D})$ if and only if $\Phi_0(\lambda_*)$ is a Lagrange multiplier of the problem $(\mathcal{P})$ and
$c_* > 0$.
\end{corollary}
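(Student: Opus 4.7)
The plan is to derive this corollary from Theorem~\ref{thrm:OptDualSol_ConvexCase} by using the additional assumptions $(A2)$, $(A3)$, and $(A6)$ (not present in that theorem) to remove the a priori restriction $\lambda_* \in K^*$ imposed there. The only thing that really needs to be proved is that, under the present hypotheses, \emph{every} optimal dual solution $(\lambda_*, c_*)$ automatically satisfies $\lambda_* \in K^*$; once this is established, both directions and the characterisation follow by invoking Theorem~\ref{thrm:OptDualSol_ConvexCase} verbatim.

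For the ``only if'' direction I would simply observe that the existence of a Lagrange multiplier implies, by Theorem~\ref{thrm:OptDualSol_ConvexCase}, that the zero duality gap property holds and that an optimal dual solution (with second component any $c > 0$) exists. For the ``if'' direction, suppose the zero duality gap property holds and let $(\lambda_*, c_*)$ be an optimal dual solution. Since assumptions $(A1)$--$(A3)$ and $(A7)$ are in force and a globally optimal primal solution $x_*$ exists, Theorem~\ref{thrm:OptDualSol_vs_GlobalSaddlePoints}(\ref{st:DualSol_GSP}) implies that $(x_*, \lambda_*)$ is a global saddle point and $c_*(x_*, \lambda_*) \le c_*$. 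The proof of Theorem~\ref{thrm:OptDualSol_vs_GlobalSaddlePoints}(\ref{st:GSP_DualSol}) then shows that $x_*$ is feasible and $\Phi(G(x_*), \lambda_*, c) = 0$ for all $c > c_*(x_*, \lambda_*)$. Now assumption $(A6)$ asserts that $\Phi(y, \lambda, c) < 0$ whenever $y \in K$ and $\lambda \in \Lambda \setminus K^*$; applied to $y = G(x_*) \in K$, this rules out $\lambda_* \notin K^*$, so $\lambda_* \in K^*$. Theorem~\ref{thrm:OptDualSol_ConvexCase} then yields a Lagrange multiplier.

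For the last claim of the corollary, the same argument shows that any optimal dual solution $(\lambda_*, c_*)$ has $\lambda_* \in K^*$, so the ``only if'' part of the moreover clause in Theorem~\ref{thrm:OptDualSol_ConvexCase} gives that $\Phi_0(\lambda_*)$ is a Lagrange multiplier (and trivially $c_* > 0$ since penalty parameters are positive by definition). Conversely, if $\Phi_0(\lambda_*)$ is a Lagrange multiplier for some $\lambda_* \in K^*$ and $c_* > 0$, the ``if'' direction of the same moreover clause provides that $(\lambda_*, c_*)$ is an optimal dual solution.

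The main (and essentially only nontrivial) obstacle is the step showing $\lambda_* \in K^*$ for an arbitrary optimal dual solution; once one notices that assumption $(A6)$ is precisely designed to exclude multipliers outside $K^*$ at feasible points on which $\Phi$ must vanish, the rest is a direct appeal to the previously established theorem.
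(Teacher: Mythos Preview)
Your proposal is correct and takes essentially the same approach as the paper. The paper's proof also reduces everything to showing that any optimal dual solution $(\lambda_*,c_*)$ satisfies $\lambda_*\in K^*$ and then invokes Theorem~\ref{thrm:OptDualSol_ConvexCase}; it obtains $\lambda_*\in K^*$ by first passing to a global saddle point via Theorem~\ref{thrm:OptDualSol_vs_GlobalSaddlePoints} and then citing \cite[Proposition~2]{Dolgopolik2018}, whereas you spell out the same conclusion directly from $\Phi(G(x_*),\lambda_*,c)=0$ together with assumption~$(A6)$.
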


\begin{proof}
Let $(\lambda_*, c_*)$ be an optimal dual solution. Then by Theorem~\ref{thrm:OptDualSol_vs_GlobalSaddlePoints} the pair
$(x_*, \lambda_*)$ is a global saddle point, and with the use of \cite[Proposition~2]{Dolgopolik2018} one can conclude
that $\lambda_* \in K^*$. Thus, for any optimal dual solution $(\lambda_*, c_*)$ one has $\lambda_* \in K^*$ and 
the claim of the corollary follows directly from Theorem~\ref{thrm:OptDualSol_ConvexCase}.
\end{proof}

With the use of the previous corollary we can finally describe the structure of the set of optimal dual solutions
$\mathscr{D}_*$, the penalty map $c_*(\cdot)$, and the set of augmented Lagrange multipliers $\mathscr{A}_*$ 
in the convex case.

\begin{corollary} \label{crlr:OptDualSolSet_LagrangeMultipliers_ConvexCase}
Let the assumptions of the previous corollary be valid. Then $\mathscr{A}_* = \Phi_0^{-1}(\Lambda_*)$, 
$c_*(\lambda_*) = 0$ for any $\lambda_* \in \mathscr{A}_*$, and the following equality holds true:
\begin{equation} \label{eq:OptDualSolSet_ConvexCase}
  \mathscr{D}_* = \Phi_0^{-1}\big(\Lambda_*\big) \times (0, + \infty).
\end{equation}
\end{corollary}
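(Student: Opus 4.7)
The plan is to derive all three claims as direct corollaries of Corollary~\ref{crlr:LagrangeMultipliersVsOptDualSol_ConvexCase}. The second part of that corollary states that, assuming a Lagrange multiplier exists, $(\lambda_*, c_*) \in \mathscr{D}_*$ if and only if $c_* > 0$ and $\Phi_0(\lambda_*) \in \Lambda_*$. Translating this equivalence into set-theoretic form yields equality \eqref{eq:OptDualSolSet_ConvexCase} immediately.

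Next, I would read off the formula for $\mathscr{A}_*$ using the identification $\mathscr{A}_* = \dom c_*(\cdot)$ from part~(ii) of the remark following the definition of augmented Lagrange multiplier. Namely, $\lambda_* \in \mathscr{A}_*$ iff there exists $c > 0$ with $(\lambda_*, c) \in \mathscr{D}_*$, and by the formula for $\mathscr{D}_*$ just obtained this happens iff $\Phi_0(\lambda_*) \in \Lambda_*$. Finally, for any $\lambda_* \in \mathscr{A}_* = \Phi_0^{-1}(\Lambda_*)$, equality~\eqref{eq:OptDualSolSet_ConvexCase} gives $(\lambda_*, c) \in \mathscr{D}_*$ for \emph{every} $c > 0$, so $c_*(\lambda_*) = \inf\{c > 0 \mid (\lambda_*, c) \in \mathscr{D}_*\} = 0$.

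The one subtlety, which I would address separately, is the degenerate case $\Lambda_* = \emptyset$, where the ``if and only if'' from the previous corollary cannot be applied in the direction used above. In that case the first part of Corollary~\ref{crlr:LagrangeMultipliersVsOptDualSol_ConvexCase} forces $\mathscr{D}_* = \emptyset$ (either the zero duality gap property fails or no optimal dual solution exists, and in both cases no global saddle point exists, so $\mathscr{A}_* = \emptyset$ as well), and both sides of \eqref{eq:OptDualSolSet_ConvexCase} collapse to the empty set while the statement about $c_*(\lambda_*)$ is vacuous. There is no real obstacle beyond this bookkeeping; the corollary is essentially a repackaging of the previously established equivalence between Lagrange multipliers and optimal dual solutions with $\lambda_* \in K^*$.
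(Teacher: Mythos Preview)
Your approach is correct and matches the paper's treatment: the corollary is stated in the paper without proof, as an immediate repackaging of Corollary~\ref{crlr:LagrangeMultipliersVsOptDualSol_ConvexCase}, and your derivation unpacks precisely that consequence. One minor imprecision in your degenerate-case bookkeeping: when $\Lambda_* = \emptyset$, the first part of the previous corollary yields only that the zero duality gap property fails \emph{or} $\mathscr{D}_* = \emptyset$, so your assertion that it ``forces $\mathscr{D}_* = \emptyset$'' is not fully justified in the sub-case where the duality gap is positive yet an optimal dual solution happens to exist; your parenthetical correctly handles $\mathscr{A}_* = \emptyset$ in both sub-cases, but not $\mathscr{D}_*$. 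That said, the paper does not address the degenerate case at all, so your treatment is already more careful than the original.
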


\begin{remark}
The fact that optimal dual solutions for convex problems do not depend on the penalty parameter motivates one to
consider a slightly different augmented dual problem in the convex case:
\begin{equation} \label{eq:AugmDualProblem_ConvexCase}
  \min_{\lambda} \: \Theta_c(\lambda) \quad \text{subject to } \lambda \in \Lambda.
\end{equation}
Here $\Theta_c(\lambda) = \Theta(c, \lambda)$ and $c > 0$ is fixed, that is, the penalty parameter is not considered as
a variable of augmented dual problem, but rather as a fixed external parameter. Note that the function $\Theta_c(\cdot)$
is concave, provided assumption $(A9)$ holds true, which is satisfied for most particular augmented Lagrangians, in
contrast to the much more restrictive assumption $(A9)_s$, which is satisfied, to the best of the author's knowledge,
only for Rockafellar-Wets' augmented Lagrangian and, in particular, the Hestenes-Powell-Rockafellar augmented
Lagrangian. Taking into account the concavity of the function $\Theta_c(\cdot)$ one can consider primal-dual augmented
Lagrangian methods based on solving problem \eqref{eq:AugmDualProblem_ConvexCase}, instead of the augmented dual problem
$(\mathcal{D})$, i.e. augmented Lagrangian methods with fixed penalty parameter. Convergence analysis of such methods
is always based on the use of a particular structure of an augmented Lagrangian under consideration (see the survey of
such methods in \cite{Iusem99}), which makes it difficult to extend such analysis to the general axiomatic augmented
Lagrangian setting adopted in this article.
\end{remark}

Let us show that in the case when assumptions $(A5)$, $(A6)$, and $(A11)$ are \textit{not} satisfied, equality 
\eqref{eq:OptDualSolSet_ConvexCase} might be no longer valid and the penalty map might not be identically equal to zero
on $\dom c_*(\cdot)$, even when the problem $(\mathcal{P})$ is convex.

\begin{example}
Let $X = Y = \mathbb{R}$. Consider the following optimization problem:
\begin{equation} \label{prob:SharpLagrangian}
  \min \: f(x) = -x \quad \text{subject to} \enspace g(x) = x \le 0.
\end{equation}
Let $\mathscr{L}(\cdot)$ be the sharp Lagrangian for this problem (i.e. the augmented Lagrangian from 
Example~\ref{ex:RockafellarWetsAugmLagr} with $\sigma(y) = \| y \|$). Then
\begin{align*}
  \mathscr{L}(x, \lambda, c) &= f(x) + \inf_{p \in (- \infty, -g(x)]} \big( - \lambda p + c |p| \big)
  \\
  &= -x + \begin{cases}
    \lambda x + c |x|, & \text{if } \lambda > c
    \\
    (\lambda + c) \max\{ 0, x \}, & \text{if } |\lambda| \le c
    \\
    - \infty, & \text{if } \lambda < - c,
  \end{cases}
\end{align*}
and, as one can readily verify by carefully writing down all particular cases,
\[
  \Theta(\lambda, c) = \begin{cases}
    0, & \text{if } c \ge |\lambda - 1|,
    \\
    - \infty, & \text{otherwise.}
  \end{cases}
\]
Consequently, one has 
\[
  \mathscr{D}_* = \Big\{ (\lambda, c) \in \mathbb{R}^2 \Bigm| c \ge |\lambda - 1| \Big\},
  \quad c_*(\lambda) = |\lambda - 1| \quad \forall \lambda \in \mathbb{R},
\]
that is, the claims of Corollary~\ref{crlr:OptDualSolSet_LagrangeMultipliers_ConvexCase} do not hold true for the sharp
Lagrangian (recall that this Lagrangian does not satisfy assumptions $(A5)$, $(A6)$, and $(A11)$).
\end{example}

\section{Convergence analysis of augmented Lagrangian methods}
\label{sect:ConvergenceAnalysis}

The goal of this section is to prove general convergence theorems for a large class of augmented Lagrangian methods
and to analyse interrelations between convergence of augmented Lagrangian methods, zero duality gap property, and 
the existence of global saddle points/optimal dual solutions. We aim at presenting such results that explicitly
highlight this kind of interrelations, instead of implicitly using them within the proofs, as it is usually done 
in the literature.

\subsection{Model augmented Lagrangian method}

We present all theoretical results for the following model augmented Lagrangian method given in 
Algorithm~\ref{alg:ModelAugmLagrMethod}. In order to include various particular cases into the general theory, we do not
specify a way in which multipliers and the penalty parameter are updated by the method, that is, they can be updated in
\textit{any} way that satisfies certain assumptions presented below. It makes our results applicable to the vast
majority of existing augmented Lagrangian methods, including methods with nonmonotone penalty parameter updates as in
\cite{BirginMartinez2012}, methods based on maximizing the augmented dual function with the use of bundle methods as in
\cite{CordovaOliveiraSagastizabal}, etc. However, one should underline that our convergence analysis is by no means
universal and there are augmented Lagrangian methods to which it cannot be applied. We will briefly discuss some such
methods further in this section (see Remark~\ref{rmrk:ModifiedSubgradientMethod} below).

\begin{algorithm} \label{alg:ModelAugmLagrMethod}
\caption{Model augmented Lagrangian method}

\textbf{Initialization.} Choose an initial value of the multipliers $\lambda_0 \in \Lambda$, a minimal value of 
the penalty parameter $c_{min} > 0$, an initial value of the penalty parameter $c_0 \ge c_{\min}$, and a sequence 
$\{ \varepsilon_n \} \subset (0, + \infty)$ of tolerances. Put $n = 0$.

\textbf{Step 1. Solution of subproblem.} Find an $\varepsilon_n$-optimal solution $x_n$ of the problem
\[
  \min \: \mathscr{L}(x, \lambda_n, c_n) \quad \text{subject to} \quad x \in Q,
\]
that is, find $x_n \in Q$ such that 
$\mathscr{L}(x_n, \lambda_n, c_n) \le \mathscr{L}(x, \lambda_n, c_n) + \varepsilon_n$
for all $x \in Q$.

\textbf{Step 2. Multiplier update.} Choose some $\lambda_{n + 1} \in \Lambda$.

\textbf{Step 3. Penalty parameter update.} Choose some $c_{n + 1} \ge c_{min}$. Increment $n$ and go to
\textbf{Step 1.}
\end{algorithm}

Let us comment on Step~1 on Algorithm~\ref{alg:ModelAugmLagrMethod}. For the purposes of theoretical analysis of
primal-dual augmented Lagrangian methods it is often assumed that the augmented Lagrangian subproblem
\[
  \min \: \mathscr{L}(x, \lambda_n, c_n) \quad \text{subject to} \quad x \in Q,
\]
is solved exactly, i.e. that $x_n$ is a globally optimal solution of this problem
\cite{TsengBertsekas,Polyak2001,Gasimov,BurachikGasimov,LiuZhang2007,LiuZhang2008,LiZhang2009,BurachikKaya,
BirginMartinez}. Moreover, even when it is assumed that this subproblem is solved only approximately as in
\cite{BurachikKayaMammadov,LuoSunLi,WangLi2009,BurachickIusemMelo_SharpLagr,Burachick2011,BurachikIusemMelo2013,
BurachikLiu2023,BurachikKayaLiu2023}, one almost always assumes that $\varepsilon_n \to 0$ as 
$n \to \infty$, and the case when $\varepsilon_n$ does not tend to zero is not properly analysed (papers
\cite{LuoWuChen2012,CordovaOliveiraSagastizabal} are very rare exceptions to this rule). However, from the practical
point of view the assumption that $\varepsilon_n \to 0$ as $n \to \infty$ cannot be satisfied, especially in the
infinite dimensional case, due to round off errors, discretisation errors, etc. The value $\varepsilon_n > 0$ should be
viewed as \textit{an unavoidable error} reflecting the overall precision with which computations \textit{can be
performed} that does not tend to zero with iterations. To take this unavoidable error into account, below we present a
detailed analysis of the model augmented Lagrangian method without assuming that $\varepsilon_n \to 0$ as 
$n \to \infty$, and then show how corresponding convergence theorems can be clarified and strengthened by imposing this
additional \textit{purely theoretical} assumption.

It should also be noted that practical augmented Lagrangian methods must include stopping criteria. We do not include 
a stopping criterion in our formulation of the model augmented Lagrangian method, because we are interested only in its
theoretical (asymptotic) analysis, that is, in the analysis of the way sequences $\{ (x_n, \lambda_n, c_n) \}$ generated
by this method behave as $n \to \infty$. This asymptotic analysis can be used to devise appropriate stopping criteria
for practical implementations of augmented Lagrangian methods.

Below we will utilise the following natural assumptions on the model augmented Lagrangian method and sequences generated
by this method that are satisfied in many particular cases:
\begin{itemize}
\item[(B1)]{for any $n \in \mathbb{N}$ the function $\mathscr{L}(\cdot, \lambda_n, c_n)$ is bounded below on $Q$;}

\item[(B2)]{the sequence of multipliers $\{ \lambda_n \}$ is bounded;}

\item[(B3)]{if the sequence of penalty parameters $\{ c_n \}$ is bounded, then one has $\dist(G(x_n), K) \to 0$ as 
$n \to \infty$; if, in addition, some subsequence $\{ \lambda_{n_k} \}$ is bounded, then 
$\Phi(G(x_{n_k}), \lambda_{n_k}, c_{n_k}) \to 0$ as $k \to \infty$;
}

\item[(B4)]{if the sequence of penalty parameters $\{ c_n \}$ is unbounded, then $c_n \to + \infty$ as $n \to \infty$.}
\end{itemize}

The assumption $(B1)$ is a basic assumption for all primal-dual augmented Lagrangian methods, which is needed to ensure
that the sequence $\{ x_n \}$ is correctly defined. The assumption $(B2)$ is often imposed for the purposes of
convergence analysis of augmented Lagrangian methods and, as is noted in \cite{BirginMartinez}, is usually satisfied in
practice for traditional rules for updating multipliers. Moreover, various techniques can be used to guarantee 
the validity of assumption $(B2)$, such as safeguarding and normalization of multipliers
\cite{LuoSunLi,LuoSunWu,BirginMartinez}. 

We formulate $(B3)$ as an assumption due to the fact that we do not impose any restrictions on the way in which
the penalty parameter $c_n$ is updated. For many augmented Lagrangian methods, penalty parameter updates are
specifically designed to ensure that assumption $(B3)$ is satisfied by default (see the rules for updating the penalty
parameter and corresponding convergence analysis in \cite{LuoSunLi,LuoSunWu,WangLi2009,BirginMartinez} and other
aforementioned papers on augmented Lagrangian methods). Finally, assumption $(B4)$ is needed only in the case of methods
with nonmonotone penalty parameter updates. It excludes the undesirable situation of unboundedly increasing oscillations
of the penalty parameter (e.g. $c_{2n} = n$ and $c_{2n + 1} = 1$ for all $n \in \mathbb{N}$), which cannot be properly
analysed within out general augmented Lagrangian setting.

\begin{remark} \label{rmrk:ModifiedSubgradientMethod}
{(i)~Assumption $(B3)$ plays one of the key roles in our convergence analysis of the model augmented Lagrangian method. 
Therefore, this analysis is inapplicable to those methods for which assumption $(B3)$ is not satisfied, such as 
the modified subgradient algorithm (the MSG) proposed by Gasimov \cite{Gasimov} (the fact that assumption $(B3)$ is not
satisfied for the MSG in the general case follows from \cite[Example~1]{BurachikGasimov}).
}

{(ii)~The convergence analysis of the model augmented Lagrangian method presented below heavily relies on the assumption
on boundedness of the sequence of multipliers and cannot be applied in the case when the sequence $\{ \lambda_n \}$ does
not have at least a bounded subsequence. However, there are primal-dual augmented Lagrangian methods for which one can
prove convergence of the sequence $\{ x_n \}$ to the set of globally optimal solutions of the problem $(\mathcal{P})$
even in the case when the norm of the multipliers $\lambda_n$ increases unboundedly with iterations. Augmented
Lagrangian methods with the so-called \textit{conditional multiplier updating} (see \cite{ConnGouldToint},
\cite[Algorithm~3]{LuoSunLi}, \cite[Algorithm~3]{LuoSunWu}, \cite[Algorithm~3]{LuoWuChen2012}) and the algorithms from
\cite{WangLi2009} are examples of such methods. The main idea behind these methods consists in designing multiplier and
penalty parameter updating rules in such a way as to ensure that an increase of the norm of the multipliers 
$\| \lambda_n \|$ is compensated by a sufficient increase of the penalty parameter $c_n$, so that one can prove that
\begin{equation} \label{eq:MethodsWithUnbounded_Mult_PenParam}
  \lim_{n \to \infty} \dist(G(x_n), K) = 0, \quad 
  \lim_{n \to \infty} \Phi(G(x_n), \lambda_n, c_n) = 0
\end{equation}
even if $\| \lambda_n \| \to + \infty$ as $n \to \infty$. It is possible to extend convergence analysis of these methods
to our axiomatic augmented Lagrangian setting by either imposing some restrictive assumptions on the function $\Phi$ or
directly assuming that relations \eqref{eq:MethodsWithUnbounded_Mult_PenParam} hold true. We do not present such 
extension here and leave it as a problem for future research.
}
\end{remark}

\subsection{Convergence analysis of the method}

Let us now turn to convergence analysis. We start with two simple observations. The first one can be viewed as 
a generalization of some existing results (e.g. \cite[Theorem~5]{Gasimov} and 
\cite[Theorem~1]{CordovaOliveiraSagastizabal}) to the case of general cone constrained problems and arbitrary augmented
Lagrangians satisfying a certain assumption.

\begin{lemma} \label{lem:AuxProblemAtCurrentIter}
Let the function $y \mapsto \Phi(y, \lambda, c)$ be non-decreasing with respect to the binary relation $\preceq$ for any
$\lambda \in \Lambda$ and $c > 0$, and let $\{ (x_n, \lambda_n, c_n) \}$ be the sequence generated by the model
augmented Lagrangian method. Then for any $n \in \mathbb{N}$ the point $x_n$ is an $\varepsilon_n$-optimal solution of
the problem
\begin{equation} \label{prob:AuxProblemAtCurrentIter}
  \min \: f(x) \quad \text{subject to } G(x) \preceq G(x_n), \quad x \in Q,
\end{equation}
Moreover, if $G(x_n) = 0$, then $x_n$ is an $\varepsilon_n$-optimal solution of the problem $(\mathcal{P})$.
\end{lemma}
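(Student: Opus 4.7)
The plan is to use the defining property of $x_n$ as an $\varepsilon_n$-optimal minimizer of $\mathscr{L}(\cdot,\lambda_n,c_n)$ on $Q$ and cancel the $\Phi$-terms on both sides using the monotonicity of $\Phi(\cdot,\lambda_n,c_n)$ with respect to $\preceq$. Assumption (B1) (or just the definition of ``$\varepsilon_n$-optimal solution'' applied within the model method) ensures $\mathscr{L}(x_n,\lambda_n,c_n)$ is finite, so in particular $\Phi(G(x_n),\lambda_n,c_n)>-\infty$; this finiteness will be needed to subtract the term safely.

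First, I would pick an arbitrary point $x\in Q$ feasible for the auxiliary problem \eqref{prob:AuxProblemAtCurrentIter}, i.e.\ satisfying $G(x)\preceq G(x_n)$. By the monotonicity hypothesis on $\Phi(\cdot,\lambda_n,c_n)$ with respect to $\preceq$, this yields
\[
  \Phi(G(x),\lambda_n,c_n)\;\le\;\Phi(G(x_n),\lambda_n,c_n),
\]
so that $\mathscr{L}(x,\lambda_n,c_n)\le f(x)+\Phi(G(x_n),\lambda_n,c_n)$. On the other hand, Step~1 of Algorithm~\ref{alg:ModelAugmLagrMethod} gives
\[
  f(x_n)+\Phi(G(x_n),\lambda_n,c_n)
  \;=\;\mathscr{L}(x_n,\lambda_n,c_n)
  \;\le\;\mathscr{L}(x,\lambda_n,c_n)+\varepsilon_n.
\]
Combining the two displays and subtracting the (finite) common term $\Phi(G(x_n),\lambda_n,c_n)$ from both sides produces $f(x_n)\le f(x)+\varepsilon_n$ for every feasible $x$, which is exactly the $\varepsilon_n$-optimality of $x_n$ for \eqref{prob:AuxProblemAtCurrentIter}. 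Note also that $x_n$ itself is feasible for that auxiliary problem since $G(x_n)\preceq G(x_n)$.

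For the second assertion, I would simply observe that when $G(x_n)=0$ the constraint $G(x)\preceq G(x_n)$ reduces to $G(x)\in K$ (since $\preceq$ is induced by $-K$ and $0\in K$), so the auxiliary problem \eqref{prob:AuxProblemAtCurrentIter} coincides with the original problem $(\mathcal{P})$ and $x_n$ is automatically feasible for $(\mathcal{P})$. The conclusion then follows from the first part.

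The proof is essentially a one-line algebraic manipulation; the only subtlety, and the sole place requiring care, is ensuring that $\Phi(G(x_n),\lambda_n,c_n)$ is finite so that it can be cancelled on both sides of the inequality. This is guaranteed because the definition of $\varepsilon_n$-optimal solution in Step~1 implicitly assumes $\mathscr{L}(x_n,\lambda_n,c_n)$ is finite (equivalently, (B1) holds), so no further hypothesis is needed.
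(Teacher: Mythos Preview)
Your proof is correct and follows essentially the same approach as the paper's: both use the monotonicity of $\Phi(\cdot,\lambda_n,c_n)$ to compare $\mathscr{L}(x,\lambda_n,c_n)$ with $f(x)+\Phi(G(x_n),\lambda_n,c_n)$ and then invoke the $\varepsilon_n$-optimality of $x_n$. The paper phrases the argument by contradiction, whereas you give the direct version, but the underlying computation is identical; your explicit remark that $\Phi(G(x_n),\lambda_n,c_n)$ is finite (so the cancellation is legitimate) is a small point the paper leaves implicit.
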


\begin{proof}
Suppose by contradiction that the claim of the lemma is false. Then one can find a feasible point $x$ of problem
\eqref{prob:AuxProblemAtCurrentIter} such that $f(x_n) > f(x) + \varepsilon_n$. Hence by our assumption on the function
$\Phi$ one gets
\begin{align*}
  \mathscr{L}(x, \lambda_n, c_n) = f(x) + \Phi(G(x), \lambda_n, c_n)
  &< f(x_n) - \varepsilon_n + \Phi(G(x_n), \lambda_n, c_n) 
  \\
  &= \mathscr{L}(x_n, \lambda_n, c_n) - \varepsilon_n,
\end{align*}
which contradicts the definition on $x_n$. 

Suppose now that $G(x_n) = 0$. Recall that the inequality $G(x) \preceq G(x_n)$ means that $G(x_n) - G(x) \in -K$ or, 
equivalently, $G(x) \in K + G(x_n) = K$. Therefore the feasible region of the problem $(\mathcal{P})$ coincides with
the feasible region of problem \eqref{prob:AuxProblemAtCurrentIter}, which implies that an $\varepsilon_n$-optimal
solution of this problem is also an $\varepsilon_n$-optimal solution of the problem $(\mathcal{P})$.
\end{proof}

\begin{remark}
The assumption that the function $y \mapsto \Phi(y, \lambda, c)$ is non-decreasing is satisfied for all particular
augmented Lagrangians presented in this paper, except for the one from Example~\ref{ex:ExpTypeAugmLagr_2OrderCone}.
\end{remark}

The second observation is connected with the augmented dual function. Recall that if the function $\Phi$ satisfies
assumption $(A1)$, then $\mathscr{L}(x, \lambda, c) \le f(x)$ for any feasible point $x$. Therefore, the following
result holds true.

\begin{lemma} \label{lem:AugmLagrValueSeq}
Let assumption $(A1)$ be valid. Then 
\[
  \Theta(\lambda_n, c_n) \le \mathscr{L}(x_n, \lambda_n, c_n) 
  \le \Theta(\lambda_n, c_n) + \varepsilon_n \le f_* + \varepsilon_n
  \quad \forall n \in \mathbb{N}.
\]
\end{lemma}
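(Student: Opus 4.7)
The claim bundles three inequalities, all of which follow directly from earlier definitions and results, so the plan is simply to chain them together.

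First I would establish the leftmost inequality $\Theta(\lambda_n, c_n) \le \mathscr{L}(x_n, \lambda_n, c_n)$. By the definition of the augmented dual function in \eqref{eq:DualFunction}, $\Theta(\lambda_n, c_n) = \inf_{x \in Q} \mathscr{L}(x, \lambda_n, c_n)$, and since $x_n \in Q$ by Step~1 of the model augmented Lagrangian method, this inequality is immediate from the fact that the infimum is no larger than the value at any particular admissible point. (Implicitly, $\Theta(\lambda_n, c_n) > -\infty$ by assumption $(B1)$, but the inequality holds regardless.)

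Next I would establish the middle inequality $\mathscr{L}(x_n, \lambda_n, c_n) \le \Theta(\lambda_n, c_n) + \varepsilon_n$. This uses the defining property of $x_n$: it is an $\varepsilon_n$-optimal solution of the subproblem on Step~1, meaning $\mathscr{L}(x_n, \lambda_n, c_n) \le \mathscr{L}(x, \lambda_n, c_n) + \varepsilon_n$ for every $x \in Q$. Taking the infimum over $x \in Q$ on the right-hand side gives precisely $\mathscr{L}(x_n, \lambda_n, c_n) \le \Theta(\lambda_n, c_n) + \varepsilon_n$.

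Finally, the rightmost inequality $\Theta(\lambda_n, c_n) + \varepsilon_n \le f_* + \varepsilon_n$ is just weak duality (Proposition~\ref{prp:WeakDuality}), which is available since assumption $(A1)$ is in force and $\lambda_n \in \Lambda$, $c_n > 0$ by construction of the method; adding $\varepsilon_n$ to both sides of $\Theta(\lambda_n, c_n) \le f_*$ yields the claim. There is no real obstacle here — the only thing to verify is that each input ($x_n \in Q$, the $\varepsilon_n$-optimality, assumption $(A1)$) is supplied, and all three are in place.
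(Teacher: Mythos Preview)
Your proof is correct and matches the paper's reasoning exactly: the paper states the lemma immediately after observing that assumption $(A1)$ gives $\mathscr{L}(x,\lambda,c)\le f(x)$ for feasible $x$ (hence weak duality), and leaves the remaining two inequalities implicit from the definitions of $\Theta$ and of an $\varepsilon_n$-optimal solution. You have simply made each of these three steps explicit.
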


Thus, if the sequence $\{ \Theta(\lambda_n, c_n) \}$ is bounded below, then the corresponding sequence 
$\{ \mathscr{L}(x_n, \lambda_n, c_n) \}$ is bounded. Let us analyse how these sequences behave in the limit. As we will
see below, this analysis is a key ingredient in the global convergence theory of augmented Lagrangian methods.

The following cumbersome technical result, which can be viewed as a partial generalization of
Theorem~\ref{thrm:DualOptVal_vs_OptValFunc}, plays a key role in our convergence analysis of the model augmented
Lagrangian method. The proof of this result is very similar to the proof of Theorem~\ref{thrm:DualOptVal_vs_OptValFunc},
and we include it only for the sake of completeness.

\begin{lemma} \label{lem:DualValuesConverg}
Let assumptions $(A1)$, $(A13)_s$, and $(A14)_s$ hold true. Suppose also that a sequence 
$\{ (x_n, \lambda_n, c_n) \} \subset Q \times \dom \Theta$ is such that:
\begin{enumerate}
\item{$\dist(G(x_n, K) \to 0$ as $n \to \infty$,}

\item{the sequence $\{ \lambda_n \}$ is bounded,}

\item{$c_n \to + \infty$ as $n \to \infty$,}

\item{the sequence $\{ \mathscr{L}(x_n, \lambda_n, c_n) - \Theta(\lambda_n, c_n) \}$ is bounded above.}
\end{enumerate}
Let finally $\varepsilon_* = \limsup_{n \to \infty} (\mathscr{L}(x_n, \lambda_n, c_n) - \Theta(\lambda_n, c_n))$.
Then
\begin{align} \label{eq:DualFunc_MinimizingSeq}
  \vartheta_* - \varepsilon_* \le \liminf_{n \to \infty} \Theta(\lambda_n, c_n)
  &\le \limsup_{n \to \infty} \Theta(\lambda_n, c_n) \le \vartheta_*
  \\ \label{eq:AugmLagrValueAlongMinSeq}
  \vartheta_* \le \liminf_{n \to \infty} \mathscr{L}(x_n, \lambda_n, c_n)
  &\le \limsup_{n \to \infty} \mathscr{L}(x_n, \lambda_n, c_n) \le \vartheta_* + \varepsilon_*
  \\ \label{eq:ObjFuncValueAlongMinSeq}
  \vartheta_* \le \liminf_{n \to \infty} f(x_n) &\le \limsup_{n \to \infty} f(x_n) \le \vartheta_* + \varepsilon_*.
\end{align}
where $\vartheta_* = \min\big\{ f_*, \liminf_{p \to 0} \beta(p) \big\}$.
\end{lemma}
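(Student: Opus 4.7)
The structure mirrors the proof of Theorem~\ref{thrm:DualOptVal_vs_OptValFunc}: we first establish a lower bound of the form $\vartheta_*$ on $\liminf \mathscr{L}(x_n,\lambda_n,c_n)$ and $\liminf f(x_n)$ via $(A13)_s$, and then an upper bound of the form $\vartheta_*$ on $\limsup \Theta(\lambda_n,c_n)$ via $(A14)_s$. The $\varepsilon_*$-slack is then propagated by the identity $\varepsilon_* = \limsup (\mathscr{L}(x_n,\lambda_n,c_n) - \Theta(\lambda_n,c_n))$.

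\emph{Lower bounds.} Since $\{\lambda_n\}$ lies in a bounded set $\Lambda_0\subseteq\Lambda$, $c_n\to+\infty$, and $\dist(G(x_n),K)\to 0$, assumption $(A13)_s$ (in the equivalent reformulation stated after $(A15)$) yields $\liminf_n \Phi(G(x_n),\lambda_n,c_n)\ge 0$. Hence $\liminf_n \mathscr{L}(x_n,\lambda_n,c_n) \ge \liminf_n f(x_n)$. To bound $\liminf_n f(x_n)$, extract a subsequence realising this liminf and split into a feasible sub-subsequence (on which $f(x_{n_k})\ge f_*$) and an infeasible one (on which, choosing $p_k\in Y$ with $G(x_{n_k})-p_k\in K$ and $\|p_k\|\le \dist(G(x_{n_k}),K)\to 0$, one has $f(x_{n_k})\ge \beta(p_k)$, hence $\liminf\ge\liminf_{p\to 0}\beta(p)$). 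Either way, $\liminf_n f(x_n)\ge \vartheta_*$, yielding the lower bounds in \eqref{eq:AugmLagrValueAlongMinSeq} and \eqref{eq:ObjFuncValueAlongMinSeq}. The lower bound in \eqref{eq:DualFunc_MinimizingSeq} follows since $\Theta(\lambda_n,c_n)\ge \mathscr{L}(x_n,\lambda_n,c_n) - (\mathscr{L}(x_n,\lambda_n,c_n)-\Theta(\lambda_n,c_n))$, giving $\liminf\Theta\ge \liminf\mathscr{L} - \varepsilon_* \ge \vartheta_*-\varepsilon_*$.

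\emph{Upper bound on $\Theta$.} The bound $\limsup_n\Theta(\lambda_n,c_n)\le f_*$ is immediate from Proposition~\ref{prp:WeakDuality} under $(A1)$. It therefore suffices to show $\limsup_n\Theta(\lambda_n,c_n)\le \beta_*:=\liminf_{p\to 0}\beta(p)$, which is the main technical obstacle. Let $\{t_n\}\subset(0,+\infty)$ be the null sequence furnished by $(A14)_s$ applied to the bounded sequence $\{\lambda_n\}$ and to $\{c_n\}$. Pick a sequence $\{p_m\}\subset Y$ with $p_m\to 0$ and $\beta(p_m)\to \beta_*$; by the definition of $\beta$ choose $z_m\in Q$ with $G(z_m)-p_m\in K$ and either $f(z_m)\le\beta(p_m)+1/m$ (if $\beta_*>-\infty$) or $f(z_m)\to-\infty$ (if $\beta_*=-\infty$). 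Choose indices $m(n)\to\infty$ with $\|p_{m(n)}\|\le t_n$, set $z_n:=z_{m(n)}$, so that $\dist(G(z_n),K)\le t_n$ and hence $\Phi(G(z_n),\lambda_n,c_n)\to 0$ by $(A14)_s$. Then
\[
  \Theta(\lambda_n,c_n)\le\mathscr{L}(z_n,\lambda_n,c_n) = f(z_n)+\Phi(G(z_n),\lambda_n,c_n),
\]
and taking $\limsup$ yields $\limsup\Theta(\lambda_n,c_n)\le \beta_*$. Combining the two upper bounds gives $\limsup\Theta(\lambda_n,c_n)\le \vartheta_*$, completing \eqref{eq:DualFunc_MinimizingSeq}.

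\emph{Propagating to $\mathscr{L}$ and $f$.} From the definition of $\varepsilon_*$, the subadditivity $\limsup(a_n+b_n)\le\limsup a_n+\limsup b_n$ gives
\[
  \limsup_n\mathscr{L}(x_n,\lambda_n,c_n) \le \limsup_n\Theta(\lambda_n,c_n) + \varepsilon_* \le \vartheta_*+\varepsilon_*,
\]
yielding the upper bound in \eqref{eq:AugmLagrValueAlongMinSeq}. Finally, writing $f(x_n) = \mathscr{L}(x_n,\lambda_n,c_n)-\Phi(G(x_n),\lambda_n,c_n)$ and using $\liminf\Phi(G(x_n),\lambda_n,c_n)\ge 0$ (hence $\limsup(-\Phi)\le 0$) gives $\limsup f(x_n)\le\vartheta_*+\varepsilon_*$, completing \eqref{eq:ObjFuncValueAlongMinSeq}. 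The only delicate step is the construction of $\{z_n\}$ above; the diagonal extraction with $\|p_{m(n)}\|\le t_n$ and the separate handling of the case $\beta_*=-\infty$ are what make the argument go through without further assumptions on $\beta$.
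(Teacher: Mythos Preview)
Your proposal is correct and follows essentially the same route as the paper's proof: lower bounds via $(A13)_s$ together with a feasible/infeasible split on a minimising subsequence, the upper bound $\limsup\Theta(\lambda_n,c_n)\le\vartheta_*$ via weak duality and the diagonal extraction against the null sequence $\{t_n\}$ from $(A14)_s$, and the propagation of the $\varepsilon_*$-slack between $\Theta$, $\mathscr{L}$, and $f$ using $\liminf\Phi\ge 0$. The only cosmetic difference is that you make the $\liminf/\limsup$ sub- and superadditivity explicit where the paper leaves it implicit; one tiny imprecision is writing $\|p_k\|\le\dist(G(x_{n_k}),K)$ rather than $\|p_k\|\le\dist(G(x_{n_k}),K)+\delta_k$ for some $\delta_k\to 0$, but this does not affect the argument.
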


\begin{proof}
Note that the upper estimate for the limit superior in \eqref{eq:ObjFuncValueAlongMinSeq} follows directly from 
the upper estimate in \eqref{eq:AugmLagrValueAlongMinSeq} and assumption $(A13)_s$. In turn, the lower estimate for the
limit inferior in \eqref{eq:ObjFuncValueAlongMinSeq} follows directly from the fact that $\dist(G(x_n), K) \to 0$ as 
$n \to \infty$.

Indeed, if some subsequence $\{ x_{n_k} \}$ is feasible for the problem $(\mathcal{P})$, then obviously 
$\liminf_{k \to \infty} f(x_{n_k}) \ge f_*$. In turn, if each member of a subsequence $\{ x_{n_k} \}$ is infeasible
for the problem $(\mathcal{P})$, then $f(x_{n_k}) \ge \beta(p_k)$ for any $p_k \in Y$ such that 
$G(x_{n_k}) - p_k \in K$. Since $\dist(G(x_{n_k}), K) \to 0$ as $k \to \infty$, one can choose $p_k \in Y$, 
$k \in \mathbb{N}$, such that $p_k \to 0$ as $k \to \infty$. Consequently, one has
\[
  \liminf_{k \to \infty} f(x_{n_k}) \ge \liminf_{k \to \infty} \beta(p_k) \ge \liminf_{p \to 0} \beta(p),
\]
which obviously implies that the the lower estimate in \eqref{eq:ObjFuncValueAlongMinSeq} holds true.

Thus, we need to prove only inequalities \eqref{eq:DualFunc_MinimizingSeq} and \eqref{eq:AugmLagrValueAlongMinSeq}. Let
us consider two cases.

\textbf{Case I.} Suppose that there exists a subsequence $\{ x_{n_k} \}$ that is feasible for the problem
$(\mathcal{P})$. Then with the use of Lemma~\ref{lem:Assumpt(A16)} one gets
\begin{align*}
  \liminf_{k \to \infty} \mathscr{L}(x_{n_k}, \lambda_{n_k}, c_{n_k}) 
  &\ge \liminf_{k \to \infty} \Big( f(x_{n_k}) 
  + \inf_{\lambda \in B(0, \eta) \cap \Lambda} \inf_{y \in K} \Phi(y, \lambda, c_{n_k}) \Big) 
  \\
  &\ge \liminf_{k \to \infty} f(x_{n_k}) \ge f_*,
\end{align*}
where $\eta = \sup_k \| \lambda_{n_k} \|$.

\textbf{Case II.} Suppose now that there exists a subsequence $\{ x_{n_k} \}$ such that $G(x_{n_k}) \notin K$ 
for all $k \in \mathbb{N}$. Then with the use of assumption $(A13)_s$ one gets
\[
  \liminf_{k \to \infty} \mathscr{L}(x_{n_k}, \lambda_{n_k}, c_{n_k}) \ge 
  \liminf_{k \to \infty} f(x_{n_k}) \ge \liminf_{k \to \infty} \beta(p_k) \ge \liminf_{p \to 0} \beta(p),
\]
where $\{ p_k \} \subset Y$ is any sequence such that $G(x_{n_k}) - p_k \in K$ for all $k \in \mathbb{N}$ and 
$\| p_k \| \to 0$ as $k \to \infty$ (clearly, such sequence exists, since $\dist(G(x_n), K) \to 0$ as $n \to \infty$). 

Combining the two cases one gets that the lower estimates for the limit inferiors in \eqref{eq:DualFunc_MinimizingSeq}
and \eqref{eq:AugmLagrValueAlongMinSeq} hold true. Let us now prove the upper estimates for the limit superiors. Note
that the upper estimate in \eqref{eq:AugmLagrValueAlongMinSeq} follows directly from the upper estimate in
\eqref{eq:DualFunc_MinimizingSeq} and Lemma~\ref{lem:AugmLagrValueSeq}. Therefore, it suffies to prove only the upper
estimate for the limit superior of $\{ \Theta(\lambda_n, c_n) \}$.

By Proposition~\ref{prp:WeakDuality} one has $\Theta(\lambda_n, c_n) \le f_*$ for all $n \in \mathbb{N}$, which implies
that $\limsup_{n \to \infty} \Theta(\lambda_n, c_n) \le f_*$. If $\beta_* := \liminf_{p \to 0} \beta(p) \ge f_*$, then
the proof is complete. Therefore, suppose that $\beta_* < f_*$.

By the definition of limit inferior there exists a sequence $\{ p_k \} \subset Y$ such that $p_k \to 0$ and 
$\beta(p_k) \to \beta_*$ as $k \to \infty$. Let $\{ t_n \}$ be the sequence from assumption $(A14)_s$. Since $p_k \to 0$
as $k \to \infty$, there exists a subsequence $\{ p_{k_n} \}$ such that $\| p_{k_n} \| \le t_n$ for all 
$n \in \mathbb{N}$. 

By the definition of the optimal value function $\beta$ for any $n \in \mathbb{N}$ one can find $x_n \in Q$ such that
$G(x_n) - p_{k_n} \in K$ (which implies that $\dist(G(x_n), K) \le t_n$) and $f(x_n) \le \beta(p_{k_n}) + 1/n$, 
if $\beta(p_{k_n}) > - \infty$, and $f(x_n) \le -n$ otherwise. By applying assumption $(A14)_s$, one gets that
\begin{align*}
  \limsup_{n \to \infty} \Theta(\lambda_n, c_n) \le \limsup_{n \to \infty} \mathscr{L}(z_n, \lambda_n, c_n)
  &= \limsup_{n \to \infty} f(z_n) 
  \\
  &= \lim_{n \to \infty} \beta(p_{k_n}) = \liminf_{p \to 0} \beta(p),
\end{align*}
which means that the upper estimate in \eqref{eq:DualFunc_MinimizingSeq} holds true.
\end{proof}

\begin{remark}
The claim of the lemma above (as well as the claim of Theorem~\ref{thrm:DualValues_AugmLagr_Limit} based on that lemma)
remains to hold true, if only restricted versions of assumptions $(A13)_s$ and $(A14)_s$ are satisfied, and one
additionally assumes that the projection of the set $G(Q)$ onto the cone $K$ is bounded (see 
Remark~\ref{rmrk:ConstraintsBoundedBelow}).
\end{remark}

\begin{corollary} \label{crlr:DualFunc_MinimizingSeq_ExactLimit}
Let assumptions $(A1)$, $(A13)_s$, and $(A14)_s$ hold true. Let also a sequence 
$\{ (\lambda_n, c_n) \} \subset \dom \Theta$ be such that the sequence $\{ \lambda_n \}$ is bounded and 
$c_n \to + \infty$ as $n \to \infty$. Suppose finally that there exists a sequence $\{ z_n \} \subset Q$ such that
$\dist(G(z_n), K) \to 0$ and $(\mathscr{L}(z_n, \lambda_n, c_n) - \Theta(\lambda_n, c_n)) \to 0$ as $n \to \infty$.
Then
\begin{equation} \label{eq:DualFunc_MinimizingSeq_ExactLimit}
  \lim_{n \to \infty} \Theta(\lambda_n, c_n) = \min\big\{ f_*, \liminf_{p \to 0} \beta(p) \big\}
\end{equation}
\end{corollary}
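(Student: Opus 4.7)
The plan is to apply Lemma \ref{lem:DualValuesConverg} directly to the sequence $\{(z_n, \lambda_n, c_n)\}$, since this corollary is essentially the ``tight'' case of the lemma in which the approximation error tends to zero.

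First I would verify that the four hypotheses of Lemma \ref{lem:DualValuesConverg} are satisfied for the sequence $\{(z_n, \lambda_n, c_n)\}$. Hypotheses (1)--(3) hold by assumption: $\dist(G(z_n), K) \to 0$, $\{\lambda_n\}$ is bounded, and $c_n \to +\infty$. For hypothesis (4), note that the assumption $\mathscr{L}(z_n, \lambda_n, c_n) - \Theta(\lambda_n, c_n) \to 0$ as $n \to \infty$ immediately implies that this sequence is bounded (in particular bounded above). Note also that $(\lambda_n, c_n) \in \dom \Theta$ by hypothesis, so the quantity $\Theta(\lambda_n, c_n)$ is finite and the difference is well-defined.

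Next I would compute the quantity $\varepsilon_*$ from the lemma. Since $\mathscr{L}(z_n, \lambda_n, c_n) - \Theta(\lambda_n, c_n) \to 0$, we obtain
\[
  \varepsilon_* = \limsup_{n \to \infty} \big( \mathscr{L}(z_n, \lambda_n, c_n) - \Theta(\lambda_n, c_n) \big) = 0.
\]
Plugging this into conclusion \eqref{eq:DualFunc_MinimizingSeq} of Lemma \ref{lem:DualValuesConverg} yields
\[
  \vartheta_* \le \liminf_{n \to \infty} \Theta(\lambda_n, c_n) \le \limsup_{n \to \infty} \Theta(\lambda_n, c_n) \le \vartheta_*,
\]
where $\vartheta_* = \min\{f_*, \liminf_{p \to 0} \beta(p)\}$. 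This forces equality throughout, so $\lim_{n \to \infty} \Theta(\lambda_n, c_n) = \vartheta_*$, which is exactly the desired equality \eqref{eq:DualFunc_MinimizingSeq_ExactLimit}.

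There is no real obstacle here: the corollary is a direct specialization of the preceding lemma to the case when the ``near optimizer'' $z_n$ achieves the dual value $\Theta(\lambda_n, c_n)$ in the limit. The only minor point worth being explicit about is that hypothesis (4) of the lemma is a bound from above, which is trivially implied by the convergence to zero assumed in the corollary, so no extra argument is needed.
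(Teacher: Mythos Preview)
Your proposal is correct and takes essentially the same approach as the paper, which simply says to apply Lemma~\ref{lem:DualValuesConverg} to the sequence $\{(x_n,\lambda_n,c_n)\}$ with $x_n = z_n$ for all $n$. Your version is just a more explicit spelling-out of why the hypotheses of that lemma are met and why $\varepsilon_* = 0$ forces the liminf and limsup to coincide.
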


\begin{proof}
Apply the previous lemma to the sequence $\{ (x_n, \lambda_n, c_n) \}$ with $x_n = z_n$ for all $n \in \mathbb{N}$.
\end{proof}

\begin{remark}
The corollary above strengthens Lemma~\ref{lem:DualValuesConverg}. Namely, it states that if there exists a sequence 
$\{ z_n \}$ satisfying the assumptions of the corollary, then one can actually replace the lower and upper estimates
\eqref{eq:DualFunc_MinimizingSeq} with equality \eqref{eq:DualFunc_MinimizingSeq_ExactLimit}. Note also that by 
the definition of augmented dual function there \textit{always} exists a sequence $\{ z_n \} \subset Q$ such that 
$(\mathscr{L}(z_n, \lambda_n, c_n) - \Theta(\lambda_n, c_n)) \to 0$ as $n \to \infty$. The main assumption of 
the corollary is that one can find a sequence $\{ z_n \}$ not only satisfying this condition, but also such that
$\dist(G(x_n), K) \to 0$ as $n \to \infty$.
\end{remark}

Let us also provide necessary and sufficient conditions for the sequence $\{ \dist(G(x_n), K) \}$ to converge to zero.

\begin{lemma} \label{lem:FeasibilityMeasConvergence}
Let assumptions $(A1)$, $(A7)$, and $(A12)_s$ hold true and a sequence 
$\{ (x_n, \lambda_n, c_n) \} \subset Q \times \dom \Theta$ be such that:
\begin{enumerate}
\item{the sequence $\{ \lambda_n \}$ is bounded,}

\item{$c_n \to + \infty$ as $n \to \infty$,}

\item{the sequence $\{ \mathscr{L}(x_n, \lambda_n, c_n) - \Theta(\lambda_n, c_n) \}$ is bounded above,}

\item{there exists $\tau > 0$ such that the function $\inf_n \Phi(G(\cdot), \lambda_n, \tau)$ is bounded below on $Q$.}
\end{enumerate}
Then for the sequence $\{ \dist(G(x_n), K) \}$ to converge to zero it is sufficient that the sequence $\{ f(x_n) \}$ is
bounded below. This condition becomes necessary, when $\liminf_{p \to 0} \beta(p) > - \infty$. 
\end{lemma}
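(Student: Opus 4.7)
The plan is to split the proof into the two implications and attack each separately. Both hinge on the same workhorse: the inequality
\[
  \Phi(G(x_n),\lambda_n,c_n) = \mathscr{L}(x_n,\lambda_n,c_n) - f(x_n)
\]
combined with the fact that, by Lemma~\ref{lem:AugmLagrValueSeq} and the hypothesis that $\mathscr{L}(x_n,\lambda_n,c_n) - \Theta(\lambda_n,c_n)$ is bounded above,
\[
  \mathscr{L}(x_n,\lambda_n,c_n) \le \Theta(\lambda_n,c_n) + M \le f_* + M
\]
for some $M > 0$ and all $n$, so that $\{\mathscr{L}(x_n,\lambda_n,c_n)\}$ is bounded above.

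For \textbf{sufficiency}, I would assume that $\{f(x_n)\}$ is bounded below and argue by contradiction. From the display above, $\{\Phi(G(x_n),\lambda_n,c_n)\}$ is bounded above as well. Suppose some subsequence satisfies $\dist(G(x_{n_k}),K) \ge r > 0$. Choose a bounded set $\Lambda_0 \subset \Lambda$ containing the sequence $\{\lambda_n\}$, and apply assumption $(A12)_s$ with $c_0 = \tau$, $\Lambda_0$, and radius $r$. For $k$ large enough, $c_{n_k} \ge \tau$, so by $(A7)$ one has $\Phi(G(x_{n_k}),\lambda_{n_k},\tau) \le \Phi(G(x_{n_k}),\lambda_{n_k},c_{n_k}) < +\infty$, while the fourth hypothesis gives the lower bound $\Phi(G(x_{n_k}),\lambda_{n_k},\tau) \ge B$. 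Hence $|\Phi(G(x_{n_k}),\lambda_{n_k},\tau)| < +\infty$, so $(A12)_s$ applies and yields
\[
  \Phi(G(x_{n_k}),\lambda_{n_k},c_{n_k}) \ge \Phi(G(x_{n_k}),\lambda_{n_k},\tau) + \alpha_k \ge B + \alpha_k
\]
with $\alpha_k \to +\infty$, contradicting the upper bound on $\{\Phi(G(x_n),\lambda_n,c_n)\}$.

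For \textbf{necessity} under $\liminf_{p\to 0}\beta(p) > -\infty$, I would use the definition of the optimal value function directly. Since $\dist(G(x_n),K) \to 0$, for every $n$ one can pick $p_n \in Y$ with $G(x_n) - p_n \in K$ and $\|p_n\| \to 0$; this yields $f(x_n) \ge \beta(p_n)$. Because $\liminf_{p\to 0}\beta(p) > -\infty$, there is $\delta > 0$ with $\inf_{\|p\|\le\delta}\beta(p) > -\infty$, and for $n$ large enough $\|p_n\|\le\delta$, giving a uniform lower bound on $\{f(x_n)\}$ (extended to all $n$ trivially).

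The main obstacle is the sufficiency direction, specifically verifying the finiteness requirement $|\Phi(G(x_{n_k}),\lambda_{n_k},\tau)| < +\infty$ needed to invoke $(A12)_s$; this is precisely where the fourth, somewhat technical hypothesis on the existence of $\tau > 0$ with $\inf_n \Phi(G(\cdot),\lambda_n,\tau)$ bounded below on $Q$ enters, providing the lower side, while the upper side comes for free from monotonicity in $c$ via $(A7)$ together with finiteness of $\mathscr{L}(x_n,\lambda_n,c_n)$ (which is implied by boundedness above of $\mathscr{L}(x_n,\lambda_n,c_n) - \Theta(\lambda_n,c_n)$ together with $(\lambda_n,c_n) \in \dom\Theta$).
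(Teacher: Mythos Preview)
Your proposal is correct and follows essentially the same approach as the paper's proof. For sufficiency you argue by contradiction via $(A12)_s$ with $c_0=\tau$, using hypothesis~4 for the lower finiteness bound and $(A7)$ plus the upper bound on $\mathscr{L}(x_n,\lambda_n,c_n)$ for the upper side---exactly as the paper does; for necessity you run the same inequality $f(x_n)\ge\beta(p_n)$ with $p_n\to 0$, the only cosmetic difference being that you argue directly while the paper phrases it as a contradiction.
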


\begin{proof}
If only a finite number of elements of the sequence $\{ x_n \}$ is infeasible for the problem $(\mathcal{P})$, then
the claim of the lemma is trivial, since in this case $G(x_n) \in K$ and $f(x_n) \ge f_*$ for any sufficiently large
$n$. Therefore, replacing, if necessary, the sequence $\{ x_n \}$ with its subsequence one can suppose that 
$G(x_n) \notin K$ for all $n \in \mathbb{N}$.

\textbf{Sufficiency.} Denote $\varepsilon_n = \mathscr{L}(x_n, \lambda_n, c_n) - \Theta(\lambda_n, c_n)$ and
\[
  \overline{\varepsilon} = \sup_n \varepsilon_n < + \infty, \quad
  \underline{f} = \inf_n f(x_n) > - \infty, \quad
  \underline{\Phi} = \inf_n \inf_{x \in Q} \Phi(G(\cdot), \lambda_n, \tau) > - \infty.
\]
By Lemma~\ref{lem:AugmLagrValueSeq} one has 
\begin{equation} \label{eq:AugmLagrSeqValues_UpperBound}
  \mathscr{L}(x_n, \lambda_n, c_n) \le \Theta(\lambda_n, c_n) + \varepsilon_n \le f_* + \overline{\varepsilon}
  \quad \forall n \in \mathbb{N},
\end{equation}
that is, the sequence $\{ \mathscr{L}(x_n, \lambda_n, c_n) \}$ is bounded above.

Fix any $r > 0$. Due to the boundedness of the sequence $\{ \lambda_n \}$ and assumption $(A12)_s$ there exists 
$t(r) \ge \tau$ such that for any $c \ge t(r)$, $n \in \mathbb{N}$, and $x \in E_n$ one has
\[
  \Phi(G(x), \lambda_n, c) - \Phi(G(x), \lambda_n, \tau) 
  \ge f_* - \underline{f} + \overline{\varepsilon} + 1 - \underline{\Phi},
\]
which implies that $\Phi(G(x), \lambda_n, c) \ge f_* - \underline{f} + \overline{\varepsilon} + 1$, where
\[
  E_n := \Big\{ x \in Q \Bigm| \dist(G(x), K) \ge r, \: \Phi(G(x), \lambda_n, \tau) < + \infty \Big\}
\]
Therefore, for any $n \in \mathbb{N}$ such that $c_n \ge t(r)$ and $\dist(G(x_n), K) \ge r$ one has
\[
  \mathscr{L}(x_n, \lambda_n, c_n) = f(x_n) + \Phi(G(x_n), \lambda_n, c_n) \ge f_* + \overline{\varepsilon} + 1,
\]
which contradicts \eqref{eq:AugmLagrSeqValues_UpperBound}. Consequently, for any $n \in \mathbb{N}$ such that 
$c_n \ge t(r)$ one has $\dist(G(x_n), K) < r$, which implies that $\dist(G(x_n), K) \to 0$ as $n \to \infty$.

\textbf{Necessity.} Suppose by contradiction that $\dist(G(x_n), K) \to 0$, but the sequence $\{ f(x_n) \}$ is
unbounded below. Then for any sequence $\{ p_n \} \subset Y$ such that $G(x_n) - p_n \in K$ and $p_n \to 0$ as 
$n \to \infty$ (at least one such sequence exists, since that $\dist(G(x_n), K) \to 0$ as $n \to \infty$) one has
\[
  - \infty = \liminf_{n \to \infty} f(x_n) \ge \liminf_{n \to \infty} \beta(p_n) \ge \liminf_{p \to 0} \beta(p),
\]
which contradicts our assumption.
\end{proof}

\begin{remark}
{(i)~The last assumption of the lemma is satisfied, in particular, if for any bounded set $\Lambda_0 \subset \Lambda$
there exists $\tau > 0$ such that the function $\inf_{\lambda \in \Lambda_0} \Phi(\cdot, \lambda, \tau)$ is bounded
below on $Y$. This assumption is satisfied for all particular examples of augmented Lagrangians from
Section~\ref{sect:Examples}, except for He-Wu-Meng's augmented Lagrangian (Example~\ref{ex:HeWuMeng}) under appropriate
additional assumptions. Namely, in the case of Rockafellar-Wet's augmented Lagrangian
(Example~\ref{ex:RockafellarWetsAugmLagr}) one needs to additionally assume that 
$\sigma(\cdot) \ge \sigma_0 \| \cdot \|^{\alpha}$ for some $\sigma_0 > 0$ and $\alpha \ge 1$, while in the case of the
(penalized) exponential-type augmented Lagrangians (Examples~\ref{ex:ExpTypeAugmLagr}, 
\ref{ex:PenalizedExpTypeAugmLagr}, \ref{ex:ExpTypeAugmLagr_2OrderCone}, \ref{ex:ExpTypeAugmLagr_SemiDef}, and 
\ref{ex:PenalizedExpTypeAugmLagr_SemiDef}) and the hyperbolic-type augmented Lagrangian
(Example~\ref{eq:HyperbolicAugmLagr}) one needs to additionally assume that the function $\phi$/$\psi$ is bounded 
below. In all other example the assumption on the boundedness below of the function $\Phi$ is satisfied without any 
additional assumptions.
}

{(ii)~The last assumption of the lemma is satisfied for He-Wu-Meng's augmented Lagrangian and the (penalized)
exponential/hyperbolic-type augmented Lagrangians with unbounded below functions $\phi$/$\psi$, if the projection of 
the set $G(Q)$ onto $K$ is bounded. In particular, in the case of inequality constrained problems it is sufficient to
suppose that the functions $g_i$, defining the constraints $g_i(x) \le 0$, are bounded below. As was noted in 
Remark~\ref{rmrk:ConstraintsBoundedBelow}, this assumption is not restrictive from the theoretical point of view.
}

{(iii)~It should be noted that we used the last assumption of the lemma in order to implicitly prove that assumption 
$(A12)_s$ implies that
\begin{equation} \label{eq:BasicAssumpt_Coercivity}
  \lim_{c \to \infty} \inf_{n \in \mathbb{N}} \inf\Big\{ \Phi(y, \lambda_n, c) \Bigm|
  y \in Y \colon \dist(y, K) \ge r \Big\} =  + \infty
\end{equation}
for any $r > 0$. Therefore one might wonder whether it would be better to formulate \eqref{eq:BasicAssumpt_Coercivity}
as a basic assumption and use it instead of assumption $(A12)_s$ and the assumption on the boundedness below of the
function $\Phi(G(\cdot), \lambda_n, \tau)$. Note, however, that in most particular cases the boundedness below of 
the function $\Phi$ is a necessary condition for \eqref{eq:BasicAssumpt_Coercivity} to hold true. In particular, one can
easily check that if a separable augmented Lagrangian (see \eqref{eq:SeparableAugmLagr_IneqConstr}) satisfies condition 
\eqref{eq:BasicAssumpt_Coercivity}, then each function $\Phi_i$ is bounded below. That is why we opted to use assumption
$(A12)_s$ along with the assumption on the boundedness below of the function $\Phi$ instead of condition
\eqref{eq:BasicAssumpt_Coercivity}.
}
\end{remark}

Now we are ready to estimate the limit of the sequence $\{ \mathscr{L}(x_n, \lambda_n, c_n) \}$ and the corresponding
sequences $\{ \Theta(\lambda_n, c_n)$ and $\{ f(x_n) \}$ of the augmented dual and objective functions' values for 
sequences $\{ (x_n, \lambda, c_n) \}$ generated by the model augmented Lagrangian method. Recall that $\Theta_*$ is the
optimal value of the augmented dual problem.

\begin{theorem}[main convergence theorem] \label{thrm:DualValues_AugmLagr_Limit}
Let $\{ (x_n, \lambda_n, c_n) \}$ be the sequence generated by the model augmented Lagrangian method, and suppose that
the following conditions are satisfied:
\begin{enumerate}
\item{assumptions $(A1)$, $(A7)$, $(A12)_s$--$(A14)_s$, and $(A15)$ hold true;
}

\item{assumptions $(B1)$--$(B4)$ hold true;}

\item{the sequence $\{ \varepsilon_n \}$ is bounded and $\limsup_{n \to \infty} \varepsilon_n = \varepsilon_*$;}

\item{for any bounded set $\Lambda_0 \subset \Lambda$ there exists $\tau > 0$ such that the function 
$\inf_{\lambda \in \Lambda_0} \Phi(G(\cdot), \lambda, \tau)$ is bounded below on $Q$.} 
\end{enumerate}
Then in the case when the sequence $\{ c_n \}$ is bounded one has $\dist(G(x_n), K) \to 0$ as $n \to \infty$, and in the
case when the sequence $\{ c_n \}$ is unbounded one has $\dist(G(x_n), K) \to 0$ if and only if the sequence 
$\{ f(x_n) \}$ is bounded below. Furthermore, if the sequence $\{ f(x_n) \}$ is bounded below, then
\begin{align} \label{eq:DualFuncLimits}
  \Theta_* - \varepsilon_* \le \liminf_{n \to \infty} \Theta(\lambda_n, c_n)
  &\le \limsup_{n \to \infty} \Theta(\lambda_n, c_n) \le \Theta_*,
  \\ \label{eq:AugmLagrValuesLimits}
  \Theta_* \le \liminf_{n \to \infty} \mathscr{L}(x_n, \lambda_n, c_n)
  &\le \limsup_{n \to \infty} \mathscr{L}(x_n, \lambda_n, c_n) \le \Theta_* + \varepsilon_*
  \\ \label{eq:ObjFuncValuesLimits}
  \Theta_* \le \liminf_{n \to \infty} f(x_n)
  &\le \limsup_{n \to \infty} f(x_n) \le \Theta_* + \varepsilon_*.
\end{align}
\end{theorem}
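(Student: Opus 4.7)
The plan is to treat the three claims separately, leaning heavily on the preliminary lemmas already proved in the section. For the first claim, the assumption $(B3)$ directly gives $\dist(G(x_n), K) \to 0$ in the bounded penalty case, with no further work needed. For the second claim, when $\{ c_n \}$ is unbounded we use $(B4)$ to upgrade to $c_n \to + \infty$, and I would then invoke Lemma~\ref{lem:FeasibilityMeasConvergence}. Its four hypotheses are verified as follows: boundedness of $\{ \lambda_n \}$ is $(B2)$; $c_n \to + \infty$ is $(B4)$; the quantity $\mathscr{L}(x_n, \lambda_n, c_n) - \Theta(\lambda_n, c_n)$ is bounded above by $\varepsilon_n$ via Lemma~\ref{lem:AugmLagrValueSeq}, hence bounded above since $\{ \varepsilon_n \}$ is bounded; the coercivity/boundedness-below condition is the fourth hypothesis of the theorem applied to the bounded set $\Lambda_0 = \{ \lambda_n \}$. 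The sufficiency direction follows at once. For the necessity direction, I would observe that if $\dist(G(x_n), K) \to 0$, then the Case I/Case II dichotomy from the proof of Lemma~\ref{lem:DualValuesConverg} yields $\liminf_{n \to \infty} f(x_n) \ge \min\{ f_*, \liminf_{p \to 0} \beta(p) \}$, so boundedness below of $\{ f(x_n) \}$ is equivalent to $\liminf_{p \to 0} \beta(p) > - \infty$, which is guaranteed by Remark~\ref{rmrk:OptValFuncFiniteLim} (using $(A7)$, $(A14)$, and $(B1)$, noting $(A14)_s$ implies $(A14)$).

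For the third claim, assuming $\{ f(x_n) \}$ is bounded below, we have $\dist(G(x_n), K) \to 0$ by the previous parts regardless of whether $\{ c_n \}$ is bounded or $c_n \to + \infty$. I split into the two cases. When $c_n \to + \infty$, I would apply Lemma~\ref{lem:DualValuesConverg} directly to the sequence $\{ (x_n, \lambda_n, c_n) \}$: its four hypotheses are already verified above, and Lemma~\ref{lem:AugmLagrValueSeq} ensures its $\varepsilon_*$ is at most the $\varepsilon_*$ of the theorem. The conclusion then gives \eqref{eq:DualFunc_MinimizingSeq}--\eqref{eq:ObjFuncValueAlongMinSeq} with $\vartheta_* = \min\{ f_*, \liminf_{p \to 0} \beta(p) \}$. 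When $\{ c_n \}$ is bounded, the second half of $(B3)$ combined with $(B2)$ yields the stronger conclusion $\Phi(G(x_n), \lambda_n, c_n) \to 0$. I then derive the limits by hand: $\liminf_n f(x_n) \ge \min\{ f_*, \liminf_{p \to 0} \beta(p) \}$ via the same Case I/II split; $\limsup_n \mathscr{L}(x_n, \lambda_n, c_n) \le \sup_n \Theta(\lambda_n, c_n) + \varepsilon_n \le \Theta_* + \varepsilon_*$ via Lemma~\ref{lem:AugmLagrValueSeq} and the definition of $\Theta_*$; the identity $\mathscr{L}(x_n, \lambda_n, c_n) = f(x_n) + \Phi(G(x_n), \lambda_n, c_n)$ with $\Phi \to 0$ transfers each bound between $f$ and $\mathscr{L}$; finally, $\Theta(\lambda_n, c_n) \le \Theta_*$ always, and $\Theta(\lambda_n, c_n) \ge \mathscr{L}(x_n, \lambda_n, c_n) - \varepsilon_n$ from Lemma~\ref{lem:AugmLagrValueSeq} gives the lower bound $\Theta_* - \varepsilon_*$.

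The one technical step that ties everything together, and the main obstacle to overcome, is the identification $\vartheta_* = \Theta_*$ that is needed to rewrite the conclusions of Lemma~\ref{lem:DualValuesConverg} and of the hand-done case in terms of the dual optimal value rather than $\min\{ f_*, \liminf_{p \to 0} \beta(p) \}$. This equality is provided by Theorem~\ref{thrm:DualOptVal_vs_OptValFunc}, whose hypotheses $(A1), (A7), (A12)$--$(A14)$ follow from our assumed $(A1), (A7), (A12)_s, (A13)_s, (A14)_s$, and whose remaining requirement $\dom_{\lambda} \Theta \ne \emptyset$ is precisely $(B1)$. Once this identification is in place, the inequalities \eqref{eq:DualFuncLimits}--\eqref{eq:ObjFuncValuesLimits} drop out from both case arguments. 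Assumption $(A15)$ is not directly invoked in my outline; I would expect it to play an auxiliary role in the bounded-$c_n$ case if $(B3)$ needs strengthening for some specific augmented Lagrangian, but the structural proof proceeds via the lemmas above.
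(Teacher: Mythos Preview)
Your proposal is correct and follows the same overall architecture as the paper: split by boundedness of $\{c_n\}$; in the unbounded case invoke $(B4)$, Lemma~\ref{lem:FeasibilityMeasConvergence}, and Lemma~\ref{lem:DualValuesConverg}; in the bounded case use $(B3)$ to get $\dist(G(x_n),K)\to 0$ and $\Phi(G(x_n),\lambda_n,c_n)\to 0$, then argue the limits by hand; finally, identify $\vartheta_* = \min\{f_*,\liminf_{p\to 0}\beta(p)\}$ with $\Theta_*$ via Theorem~\ref{thrm:DualOptVal_vs_OptValFunc}.

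The one genuine difference is your upper estimate in the bounded-$c_n$ case. The paper splits into the sub-cases $f_* \le \liminf_{p\to 0}\beta(p)$ and $f_* > \liminf_{p\to 0}\beta(p)$; in the second sub-case it constructs an auxiliary near-feasible sequence $\{z_n\}$ and invokes $(A15)$ to show $\limsup_n \Phi(G(z_n),\lambda_n,c_n)\le 0$, thereby proving $\limsup_n \Theta(\lambda_n,c_n)\le \beta_*$. You bypass this entirely by observing that $\Theta(\lambda_n,c_n)\le \Theta_*$ holds tautologically (since $\Theta_*$ is the supremum of $\Theta$), and then Lemma~\ref{lem:AugmLagrValueSeq} gives $\mathscr{L}(x_n,\lambda_n,c_n)\le \Theta_* + \varepsilon_n$ directly. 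This is strictly simpler, and your remark that $(A15)$ is not needed is correct: once Theorem~\ref{thrm:DualOptVal_vs_OptValFunc} is available (and its hypotheses are implied by $(A1),(A7),(A12)_s$--$(A14)_s$ together with $(B1)$), the paper's $(A15)$-based argument is redundant. The paper's route has the mild advantage of being more self-contained at that step, but your shortcut is the cleaner proof.
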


\begin{proof}
If the sequence $\{ c_n \}$ is unbounded, then by assumption $(B4)$ one has $c_n \to + \infty$ as $n \to \infty$, and 
the claim of the theorem follows directly from Lemmas~\ref{lem:DualValuesConverg} and
\ref{lem:FeasibilityMeasConvergence} (the fact that $\lim_{p \to 0} \beta(p) > - \infty$ follows directly from
Remark~\ref{rmrk:OptValFuncFiniteLim}). Therefore, suppose that the sequence of penalty parameters $\{ c_n \}$ is
bounded. Note that inequalities \eqref{eq:DualFuncLimits} in  this case follow from the first inequality in
\eqref{eq:AugmLagrValuesLimits} and the definition of $\Theta_*$. Note further that by assumption $(B3)$ one has
$\dist(G(x_n), K) \to 0$ and $\Phi(G(x_n), \lambda_n, c_n) \to 0$ as $n \to \infty$. Consequently, one has
\[
  \liminf_{n \to \infty} \mathscr{L}(x_n, \lambda_n, c_n) = \liminf_{n \to \infty} f(x_n), \quad
  \limsup_{n \to \infty} \mathscr{L}(x_n, \lambda_n, c_n) = \limsup_{n \to \infty} f(x_n).
\]
Thus, it is sufficient to prove either of the inequalities \eqref{eq:AugmLagrValuesLimits} and 
\eqref{eq:ObjFuncValuesLimits}. We divide the rest of the proof into two parts.

\textbf{Part 1. Lower estimate.} Let $\{ x_{n_k} \}$ be any subsequence such that
\[
  \lim_{k \to \infty} f(x_{n_k}) = \liminf_{n \to \infty} f(x_n)
\]
(at least one such subsequence exists by the definition of limit inferior). Suppose at first that there exists 
a subsequence of the sequence $\{ x_{n_k} \}$, which we denote again by $\{ x_{n_k} \}$, that is feasible for 
the problem $(\mathcal{P})$. Then $f(x_{n_k}) \ge f_*$ for all $k \in \mathbb{N}$ and the lower estimate for the limit
inferior in \eqref{eq:ObjFuncValuesLimits} holds true by Proposition~\ref{prp:WeakDuality}.

Suppose now that $x_{n_k}$ is infeasible for the problem $(\mathcal{P})$ for all $k$ greater than some 
$k_0 \in \mathbb{N}$. Since $\dist(G(x_n), K) \to 0$ as $n \to \infty$, for any $k \in k_0$ one can find
$p_k \in Y$ such that $G(x_{n_k}) - p_k \in K$ and $p_k \to 0$ as $k \to \infty$. Consequently, 
$f(x_{n_k}) \ge \beta(p_k)$ for all $k \ge k_0$ and
\[
  \lim_{k \to \infty} f(x_{n_k}) \ge \liminf_{k \to \infty} \beta(p_k) \ge 
  \liminf_{p \to 0} \beta(p),
\]
which by Theorem~\ref{thrm:DualOptVal_vs_OptValFunc} implies that the lower estimate for the limit inferior in
\eqref{eq:ObjFuncValuesLimits} is valid.

\textbf{Part 2. Upper estimate.} Suppose at first that $f_* \le \liminf_{p \to 0} \beta(p)$. Then by 
Lemma~\ref{lem:AugmLagrValueSeq} and Theorem~\ref{thrm:DualOptVal_vs_OptValFunc} one has
\[
  \limsup_{n \to \infty} \mathscr{L}(x_n, \lambda_n, c_n) \le f_* + \varepsilon_*
  = \Theta_* + \varepsilon_*,
\]
that is, the upper estimate for the limit superior in \eqref{eq:AugmLagrValuesLimits} holds true.

Let us now consider the case $f_* > \liminf_{p \to 0} \beta(p) =: \beta_*$. By the definition of limit inferior 
there exists $\{ p_n \} \subset Y$ such that $p_n \to 0$ and $\beta(p_n) \to \beta_*$ as $n \to \infty$. 
Note that $\beta_* > - \infty$ by Remark~\ref{rmrk:OptValFuncFiniteLim} and, therefore, one can suppose that
$\beta(p_n) > - \infty$ for all $n \in \mathbb{N}$.

By the definition of the optimal value function one can find a sequence $\{ z_n \} \subset Q$ such that 
$G(z_n) - p_n \in K$ and $f(z_n) \le \beta(p_n) + 1/n$ for all $n \in \mathbb{N}$. Hence, in particular, 
$\dist(G(z_n), K) \to 0$ as $n \to \infty$, which by assumption $(A15)$ implies that 
$\limsup_{n \to \infty} \Phi(G(z_n), \lambda_n, c_n) \le 0$. Consequently, one has
\[
  \limsup_{n \to \infty} \Theta(\lambda_n, c_n) \le \limsup_{n \to \infty} \mathscr{L}(z_n, \lambda_n, c_n)
  \le \lim_{n \to \infty} f(z_n) = \lim_{n \to \infty} \beta(p_n) = \beta_*.
\]
Recall that by the definition of $x_n$ one has 
$\mathscr{L}(x_n, \lambda_n, c_n) \le \Theta(\lambda_n, c_n) + \varepsilon_n$. Therefore the inequality above along
with Theorem~\ref{thrm:DualOptVal_vs_OptValFunc} imply that the upper estimate for the limit superior 
in \eqref{eq:AugmLagrValuesLimits} holds true.
\end{proof}

\begin{remark} \label{rmrk:AugmLagrMethod_LimitAlongSubseq}
{(i)~The previous theorem can be slightly generalized in the following way. Namely, suppose that assumption $(B2)$ does 
not hold true, but there exists a bounded subsequence $\{ \lambda_{n_k} \}$. Then the claim of
Theorem~\ref{thrm:DualValues_AugmLagr_Limit} holds true for the corresponding subsequence 
$\{ (x_{n_k}, \lambda_{n_k}, c_{n_k}) \}$. In the case when the sequence $\{ c_n \}$ is unbounded, one simply needs
to apply Lemmas~\ref{lem:DualValuesConverg} and \ref{lem:FeasibilityMeasConvergence} to this subsequence. In the case
when the sequence $\{ c_n \}$ is bounded, one just needs to repeat the proof of the theorem with the sequence
$\{ (x_n, \lambda_n, c_n) \}$ replaced by the subsequence $\{ (x_{n_k}, \lambda_{n_k}, c_{n_k}) \}$.
}

{(ii)~Note that from the proof of Theorem~\ref{thrm:DualValues_AugmLagr_Limit} it follows that the sequence 
$\{ f(x_n) \}$ is always bounded below in the case when the sequence $\{ c_n \}$ is bounded.
}

{(iii)~In many papers on augmented Lagrangians and augmented Lagrangians methods, it is assumed by default that 
the function $f$ is bounded below on the set $Q$ (see \cite[Assumption~1]{LuoSunLi} \cite[Assumption~2.3]{LuoSunWu},
\cite[Assumption~1]{LiuYang2008} \cite[Assumption~1]{WangLi2009}, \cite[Assumption~1]{LuoWuChen2012},
\cite[Assumption~(1)]{WangLiuQu}, etc.). From the theoretical point of view this assumption is not restrictive, since
one can always replace the objective function $f$ with $e^{f(\cdot)}$. This assumption ensures that 
$\dist(G(x_n), K) \to 0$ as $n \to \infty$ for sequences $\{ (x_n, \lambda_n, c_n) \}$ generated by the model augmented
Lagrangian method, regardless of whether the sequence of penalty parameters is bounded or not. Furthermore, in the case
when the sequence $\{ c_n \}$ increases unboundedly, this assumptions guarantees that estimates
\eqref{eq:DualFuncLimits} can be replaced with equality \eqref{eq:DualFunc_MinimizingSeq_ExactLimit} by
Corollary~\ref{crlr:DualFunc_MinimizingSeq_ExactLimit} and Lemma~\ref{lem:FeasibilityMeasConvergence}.
}
\end{remark}

\begin{corollary}
Let the assumptions of Theorem~\ref{thrm:DualValues_AugmLagr_Limit} hold true, and suppose that $\varepsilon_n \to 0$
as $n \to \infty$. Then
\[
  \lim_{n \to \infty} \Theta(\lambda_n, c_n) = \lim_{n \to \infty} \mathscr{L}(x_n, \lambda_n, c_n)
  = \lim_{n \to \infty} f(x_n) = \Theta_*.
\]
\end{corollary}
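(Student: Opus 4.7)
The proof is a direct squeeze argument built on Theorem~\ref{thrm:DualValues_AugmLagr_Limit}, so the plan is essentially to make that observation precise and check that the hypotheses of the theorem are genuinely available.

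The first step is to note that the hypothesis $\varepsilon_n \to 0$ forces $\varepsilon_* := \limsup_{n \to \infty} \varepsilon_n = 0$. One then wants to invoke the three chains of inequalities \eqref{eq:DualFuncLimits}, \eqref{eq:AugmLagrValuesLimits}, and \eqref{eq:ObjFuncValuesLimits} from Theorem~\ref{thrm:DualValues_AugmLagr_Limit} with this value of $\varepsilon_*$: in each chain the outer bounds collapse to $\Theta_*$, so by the sandwich theorem the limits inferior and superior of $\{ \Theta(\lambda_n, c_n) \}$, $\{ \mathscr{L}(x_n, \lambda_n, c_n) \}$, and $\{ f(x_n) \}$ all exist and equal $\Theta_*$.

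The only nontrivial point is verifying that the estimates from Theorem~\ref{thrm:DualValues_AugmLagr_Limit} are actually available, since that theorem states them only under the additional hypothesis that $\{ f(x_n) \}$ is bounded below. I would dispose of this as follows: split into the two cases allowed by assumption $(B4)$. If $\{ c_n \}$ is bounded, then assumption $(B3)$ gives $\dist(G(x_n), K) \to 0$ and $\Phi(G(x_n), \lambda_n, c_n) \to 0$; combined with Lemma~\ref{lem:AugmLagrValueSeq} (which bounds $\mathscr{L}(x_n, \lambda_n, c_n)$ from above by $f_* + \varepsilon_n$) this forces $\{ f(x_n) \}$ to be bounded below automatically, cf.\ Remark~\ref{rmrk:AugmLagrMethod_LimitAlongSubseq}(ii). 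If $\{ c_n \}$ is unbounded, then by assumption $(B4)$ one has $c_n \to + \infty$, and one reads the corollary as tacitly asserting that $\{ f(x_n) \}$ is bounded below (which is the very condition under which Theorem~\ref{thrm:DualValues_AugmLagr_Limit} yields $\dist(G(x_n), K) \to 0$ and the three inequality chains).

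I do not anticipate any obstacle here: once the reduction to Theorem~\ref{thrm:DualValues_AugmLagr_Limit} with $\varepsilon_* = 0$ is made, the rest is purely a squeeze. The only subtlety worth commenting on in the final write-up is the need, in the unbounded-$\{c_n\}$ case, to either import the boundedness-below assumption from the ambient hypotheses of Theorem~\ref{thrm:DualValues_AugmLagr_Limit} or to note that the conclusion of the corollary itself ensures it a posteriori.
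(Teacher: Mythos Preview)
Your proposal is correct and matches the paper's intended argument: the corollary is stated without proof in the paper, since it is meant to follow immediately from Theorem~\ref{thrm:DualValues_AugmLagr_Limit} by setting $\varepsilon_* = 0$ and applying the squeeze. Your additional care in verifying that $\{f(x_n)\}$ is bounded below (automatic when $\{c_n\}$ is bounded via Remark~\ref{rmrk:AugmLagrMethod_LimitAlongSubseq}(ii), and tacitly assumed otherwise) is a detail the paper leaves implicit.
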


\subsection{Primal convergence}

Now we are ready to prove general theorems on convergence of the sequence $\{ x_n \}$ generated by the model augmented
Lagrangian method. Denote by $\Delta_* = f_* - \Theta_*$ the duality gap between the primal problem $(\mathcal{P})$ and 
the augmented dual problem $(\mathcal{D})$.

\begin{theorem}[primal convergence vs. duality gap] \label{thrm:GlobalConv_vs_DualityGap}
Let the assumptions of Theorem~\ref{thrm:DualValues_AugmLagr_Limit} be valid, the sequence $\{ f(x_n) \}$ be bounded
below, and the functions $f$ and $\dist(G(\cdot), K)$ be lsc on $Q$. Then the sequence $\{ x_n \}$ has limit points,
only if $\Delta_* \le \varepsilon_*$ (that is, the  duality gap is smaller than the tolerance with which the augmented
Lagrangian subproblems are solved). Furthermore, all limit points of the sequence $\{ x_n \}$ (if such points exist)
are $(\varepsilon_* - \Delta_*)$-optimal solutions of the problem $(\mathcal{P})$.
\end{theorem}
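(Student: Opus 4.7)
The plan is to apply Theorem~\ref{thrm:DualValues_AugmLagr_Limit} directly and then use lower semicontinuity to transfer the asymptotic bounds on the sequence $\{ x_n \}$ to any limit point. Since the sequence $\{ f(x_n) \}$ is bounded below by hypothesis, Theorem~\ref{thrm:DualValues_AugmLagr_Limit} already yields the two key facts
\[
  \lim_{n \to \infty} \dist(G(x_n), K) = 0, \qquad
  \limsup_{n \to \infty} f(x_n) \le \Theta_* + \varepsilon_* = f_* - \Delta_* + \varepsilon_*.
\]
These are precisely the two ingredients needed for the argument.

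First I would pick a limit point $x_*$ of $\{ x_n \}$ (assuming one exists) together with a subsequence $\{ x_{n_k} \}$ converging to $x_*$. Since $x_{n_k} \in Q$ and $Q$ is closed, one has $x_* \in Q$. Lower semicontinuity of $\dist(G(\cdot), K)$ on $Q$ together with the first displayed relation above gives
\[
  \dist(G(x_*), K) \le \liminf_{k \to \infty} \dist(G(x_{n_k}), K) = 0,
\]
so $G(x_*) \in K$, i.e. $x_*$ is feasible for $(\mathcal{P})$. Next, lower semicontinuity of $f$ on $Q$ combined with the second displayed relation yields
\[
  f(x_*) \le \liminf_{k \to \infty} f(x_{n_k}) \le \limsup_{n \to \infty} f(x_n) \le f_* - \Delta_* + \varepsilon_*.
\]

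From these two inequalities the full theorem follows immediately. Feasibility of $x_*$ gives $f(x_*) \ge f_*$, and combining this with the bound above yields $f_* \le f_* - \Delta_* + \varepsilon_*$, that is $\Delta_* \le \varepsilon_*$, which establishes the necessary condition for the existence of limit points. The same inequality rewritten as $f(x_*) - f_* \le \varepsilon_* - \Delta_*$ shows that $x_*$ is an $(\varepsilon_* - \Delta_*)$-optimal solution of $(\mathcal{P})$. There is no genuine obstacle here: the entire difficulty has already been absorbed into Theorem~\ref{thrm:DualValues_AugmLagr_Limit}, and the only role of the present statement is to repackage its asymptotic conclusions via the standard semicontinuity argument for subsequential limits.
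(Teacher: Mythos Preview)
Your proof is correct and follows essentially the same route as the paper's own argument: invoke Theorem~\ref{thrm:DualValues_AugmLagr_Limit} to obtain $\dist(G(x_n),K)\to 0$ and $\limsup_n f(x_n)\le \Theta_*+\varepsilon_*$, then use lower semicontinuity to pass to a limit point and conclude feasibility and the bound $f_*\le f(x_*)\le f_*-\Delta_*+\varepsilon_*$. The only extra detail you add is the explicit observation that $x_*\in Q$ by closedness of $Q$, which the paper leaves implicit.
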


\begin{proof}
Suppose that there exists a limit point $x_*$ of the sequence $\{ x_n \}$, i.e. there exists a subsequence 
$\{ x_{n_k} \}$ that converges to $x_*$. Recall that $\dist(G(x_n), K) \to 0$ as $n \to \infty$ and
\[
  \limsup_{n \to \infty} f(x_n) \le \Theta_* + \varepsilon_*
\]
by Theorem~\ref{thrm:DualValues_AugmLagr_Limit}. Hence taking into account the semicontinuity assumptions 
one can conclude that $\dist(G(x_*), K) = 0$, i.e. $x_*$ is feasible for the problem $(\mathcal{P})$, and 
\[
  f_* \le f(x_*) \le \Theta_* + \varepsilon_* \le f_* + \varepsilon_* - \Delta_*.
\]  
Therefore, $\Delta_* \le \varepsilon_*$ and $x_*$ is an $(\varepsilon_* - \Delta_*)$-optimal solution
of the problem $(\mathcal{P})$.
\end{proof}

\begin{corollary}[primal convergence vs. zero duality gap]
If under the assumptions of the previous theorem $\varepsilon_n \to 0$ as $n \to \infty$, then for the sequence 
$\{ x_n \}$ to have limit points it is necessary that there is zero duality gap between the primal problem
$(\mathcal{P})$ and the augmented dual problem $(\mathcal{D})$. Furthermore, in this case all limit points of 
the sequence $\{ x_n \}$ (if such points exist) are globally optimal solutions of the problem $(\mathcal{P})$.
\end{corollary}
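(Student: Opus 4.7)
The plan is simply to apply the preceding theorem (primal convergence vs.\ duality gap) with the quantity $\varepsilon_*$ evaluated at zero, and then invoke weak duality to convert the resulting inequality into the zero duality gap property.

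First I would observe that the hypothesis $\varepsilon_n \to 0$ implies
\[
  \varepsilon_* = \limsup_{n \to \infty} \varepsilon_n = 0,
\]
so the boundedness condition on $\{\varepsilon_n\}$ required by Theorem~\ref{thrm:DualValues_AugmLagr_Limit} is automatically satisfied, and all the assumptions of the previous theorem on primal convergence remain in force.

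Next, assuming the sequence $\{x_n\}$ possesses at least one limit point, the previous theorem yields $\Delta_* \le \varepsilon_* = 0$. Combining this with weak duality (Proposition~\ref{prp:WeakDuality}), which gives $\Theta_* \le f_*$ and hence $\Delta_* = f_* - \Theta_* \ge 0$, forces $\Delta_* = 0$. This is precisely the zero duality gap property.

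Finally, the second conclusion of the preceding theorem states that every limit point of $\{x_n\}$ is an $(\varepsilon_* - \Delta_*)$-optimal solution of $(\mathcal{P})$; with $\varepsilon_* = \Delta_* = 0$, this reduces to being a globally optimal solution. There is no real obstacle here since the corollary is just a specialization; the only thing worth spelling out carefully is the appeal to weak duality to rule out a negative duality gap (which cannot occur) and thereby promote the inequality $\Delta_* \le 0$ to the equality $\Delta_* = 0$.
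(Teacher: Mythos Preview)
Your proposal is correct and matches the paper's intent: the corollary is stated in the paper without proof, as an immediate specialization of the preceding theorem with $\varepsilon_* = 0$. Your explicit invocation of weak duality to turn $\Delta_* \le 0$ into $\Delta_* = 0$ is exactly the missing detail one would fill in, and the rest is a direct read-off of the previous theorem's conclusions.
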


In the case when the space $X$ is reflexive (in particular, finite dimensional), we can prove a somewhat stronger 
result. Namely, we can show that if the zero duality gap property is not satisfied, then the sequence $\{ x_n \}$
necessarily escapes to infinity as $n \to \infty$.

\begin{theorem}[boundedness vs. duality gap] 
Let the assumptions of Theorem~\ref{thrm:DualValues_AugmLagr_Limit} be valid, the space $X$ be reflexive, the set $Q$ 
be weakly sequentially closed, the functions $f$ and $\dist(G(\cdot), K)$ be weakly sequentially lsc on $Q$, 
the sequence $\{ f(x_n) \}$ be bounded below. Then the following statements hold true:
\begin{enumerate}
\item{for the sequence $\{ x_n \}$ to have a bounded subsequence it is necessary that $\Delta_* \le \varepsilon_*$;}

\item{all weakly limit points of the sequence $\{ x_n \}$ (if such points exist at all) are 
$(\varepsilon_* - \Delta_*)$-optimal solutions of the problem $(\mathcal{P})$;}

\item{if $\varepsilon_n \to 0$ as $n \to \infty$, then for the sequence $\{ x_n \}$ to have a bounded subsequence 
it is necessary that the zero duality gap property holds true; furthermore, in this case all weakly limit points of 
the sequence $\{ x_n \}$ are globally optimal solutions of the problem $(\mathcal{P})$.}
\end{enumerate}
\end{theorem}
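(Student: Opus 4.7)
The plan is to mirror the proof of the preceding strong-topology theorem (Theorem~\ref{thrm:GlobalConv_vs_DualityGap}), replacing norm convergence and ordinary lower semicontinuity by weak sequential convergence and weak sequential lower semicontinuity, using reflexivity of $X$ as the replacement for sequential compactness of bounded subsets. The main convergence theorem (Theorem~\ref{thrm:DualValues_AugmLagr_Limit}) is the single engine powering everything: under the present hypotheses it delivers both $\dist(G(x_n), K) \to 0$ and $\limsup_n f(x_n) \le \Theta_* + \varepsilon_*$, and the rest is soft analysis.

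I would first establish statements (1) and (2) in one stroke. Let $x_*$ be a weak limit point of $\{x_n\}$, i.e.\ there is a subsequence $\{x_{n_k}\}$ converging weakly to $x_*$. Weak sequential closedness of $Q$ forces $x_* \in Q$. By Theorem~\ref{thrm:DualValues_AugmLagr_Limit}, $\dist(G(x_n), K) \to 0$, so weak sequential lower semicontinuity of $\dist(G(\cdot), K)$ yields
\[
  \dist(G(x_*), K) \le \liminf_{k \to \infty} \dist(G(x_{n_k}), K) = 0,
\]
meaning $x_*$ is feasible for $(\mathcal{P})$. Similarly, weak sequential lower semicontinuity of $f$ gives
\[
  f(x_*) \le \liminf_{k \to \infty} f(x_{n_k}) \le \limsup_{n \to \infty} f(x_n)
  \le \Theta_* + \varepsilon_* = f_* - \Delta_* + \varepsilon_*.
\]
Feasibility of $x_*$ forces $f(x_*) \ge f_*$, from which $\Delta_* \le \varepsilon_*$ and $f(x_*) - f_* \le \varepsilon_* - \Delta_*$, proving (2) and half of (1). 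To complete (1), note that any bounded subsequence of $\{x_n\}$ admits, by reflexivity of $X$, a weakly convergent sub-subsequence, whose weak limit is then a weak limit point and hence produces the inequality $\Delta_* \le \varepsilon_*$ by the argument just given.

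Statement (3) is then immediate by specializing to $\varepsilon_* = 0$. Weak duality (Proposition~\ref{prp:WeakDuality}) gives $\Delta_* \ge 0$ unconditionally, so the necessary condition $\Delta_* \le 0$ collapses to $\Delta_* = 0$, i.e.\ the zero duality gap property. With $\varepsilon_* - \Delta_* = 0$, every weak limit point is an exact globally optimal solution by (2).

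I do not anticipate any genuine obstacle: the proof is a line-by-line translation of Theorem~\ref{thrm:GlobalConv_vs_DualityGap} in which the only new ingredient is the use of reflexivity to extract weakly convergent sub-subsequences from bounded ones. The hypotheses on weak sequential closedness of $Q$ and on weak sequential lower semicontinuity of $f$ and $\dist(G(\cdot), K)$ are precisely what is required to push the feasibility and objective-value estimates through to the weak limit, and no other subtlety intervenes.
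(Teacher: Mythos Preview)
Your proposal is correct and follows essentially the same approach as the paper, which simply states that one should repeat the proof of Theorem~\ref{thrm:GlobalConv_vs_DualityGap} using the fact that any bounded sequence in a reflexive Banach space has a weakly convergent subsequence. Your write-up is in fact more detailed than the paper's one-sentence proof, spelling out explicitly the role of weak sequential closedness of $Q$ and the weak sequential lower semicontinuity hypotheses.
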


\begin{proof}
Bearing in mind the fact that any bounded sequence in a reflexive Banach space has a weakly convergent subsequence 
and arguing in the same way as in the proof of Theorem~\ref{thrm:GlobalConv_vs_DualityGap} one can easily verify that 
all claims of this theorem hold true.
\end{proof}

\subsection{Dual convergence}

Let us now turn to analysis of dual convergence, that is, convergence of the sequence of multipliers $\{ \lambda_n \}$
or, more precisely, convergence of the dual sequence $\{ (\lambda_n, c_n) \}$. Although convergence of multipliers for
some particular augmented Lagrangian methods can be studied, even in the case when the sequence of multipliers 
$\{ c_n \}$ increases unboundedly, with the use of optimality conditions, only convergence of the whole sequence 
$\{ (\lambda_n, c_n) \}$ is apparently connected with some fundamental properties of the augmented dual problem. Such
connection might exist in the case when the penalty parameter increases unboundedly, but an analysis of such connection
is a challenging task, which we leave as an open problem for future research.

We start our study of the dual convergence with a simple auxiliary result that provides an important characterisation
of limit points of the sequence $\{ (\lambda_n, c_n) \}$.

\begin{lemma} \label{lem:DualLimit}
Let all assumptions of Theorem~\ref{thrm:DualValues_AugmLagr_Limit} be valid, except for assumption $(B2)$. Suppose
also that assumption $(A10)$ holds true and the sequence $\{ c_n \}$ is bounded. Then any limit point 
$(\lambda_*, c_*)$ of the sequence $\{ (\lambda_n, c_n) \}$ (if such point exists) is an $\varepsilon_*$-optimal
solution of the dual problem.
\end{lemma}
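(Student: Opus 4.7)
The plan is to extract a convergent subsequence realizing the limit point, apply the subsequence version of the main convergence theorem (Theorem~\ref{thrm:DualValues_AugmLagr_Limit}) to obtain a lower estimate on $\liminf \Theta(\lambda_{n_k}, c_{n_k})$, and then use upper semicontinuity of the dual function together with weak duality to squeeze $\Theta(\lambda_*, c_*)$ between $\Theta_* - \varepsilon_*$ and $\Theta_*$.

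More precisely, let $(\lambda_*, c_*)$ be a limit point of $\{(\lambda_n, c_n)\}$, and fix a subsequence $\{(\lambda_{n_k}, c_{n_k})\}$ converging to it. Since $c_{n_k} \ge c_{\min} > 0$ and $\Lambda$ is a closed convex cone, one has $(\lambda_*, c_*) \in \Lambda \times (0, +\infty)$, so the dual value $\Theta(\lambda_*, c_*)$ is well defined. The subsequence $\{\lambda_{n_k}\}$ is bounded (as a convergent sequence), so by Remark~\ref{rmrk:AugmLagrMethod_LimitAlongSubseq}(i) the conclusions of Theorem~\ref{thrm:DualValues_AugmLagr_Limit} are valid along this subsequence. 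Since $\{c_n\}$ is assumed bounded, the bounded-penalty branch of that theorem gives in particular the lower estimate
\[
  \Theta_* - \varepsilon_* \le \liminf_{k \to \infty} \Theta(\lambda_{n_k}, c_{n_k}).
\]

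Next I would invoke assumption $(A10)$: since for every $y \in Y$ the function $(\lambda, c) \mapsto \Phi(y, \lambda, c)$ is upper semicontinuous, the composition $\mathscr{L}(x, \cdot, \cdot) = f(x) + \Phi(G(x), \cdot, \cdot)$ is upper semicontinuous in $(\lambda, c)$ for each $x \in Q$, and therefore the dual function $\Theta(\lambda, c) = \inf_{x \in Q} \mathscr{L}(x, \lambda, c)$, being the infimum of a family of upper semicontinuous functions, is itself upper semicontinuous (cf.\ Remark~\ref{rmrk:DualFunc_Concave_usc}). Applying this at $(\lambda_*, c_*)$ yields
\[
  \limsup_{k \to \infty} \Theta(\lambda_{n_k}, c_{n_k}) \le \Theta(\lambda_*, c_*).
\]

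Combining the two inequalities with weak duality (Proposition~\ref{prp:WeakDuality}), which gives $\Theta(\lambda_*, c_*) \le \Theta_*$, I obtain
\[
  \Theta_* - \varepsilon_* \le \Theta(\lambda_*, c_*) \le \Theta_*,
\]
so that $\Theta_* - \Theta(\lambda_*, c_*) \le \varepsilon_*$, which is precisely the definition of $(\lambda_*, c_*)$ being an $\varepsilon_*$-optimal solution of the augmented dual problem $(\mathcal{D})$.

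I do not foresee a serious obstacle: the argument is essentially a clean composition of results already proved. The one point requiring a little care is the justification that Theorem~\ref{thrm:DualValues_AugmLagr_Limit} may be applied along the subsequence without $(B2)$ holding globally, which is exactly what Remark~\ref{rmrk:AugmLagrMethod_LimitAlongSubseq}(i) provides, and the verification that $(A10)$ transfers upper semicontinuity from $\Phi$ to $\Theta$ in both variables $(\lambda, c)$ jointly, which is immediate from the union-of-open-sets characterisation of upper semicontinuity.
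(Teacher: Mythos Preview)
Your proof is correct and follows essentially the same approach as the paper: extract a convergent subsequence, apply Theorem~\ref{thrm:DualValues_AugmLagr_Limit} along it via Remark~\ref{rmrk:AugmLagrMethod_LimitAlongSubseq}(i) to get $\liminf_k \Theta(\lambda_{n_k}, c_{n_k}) \ge \Theta_* - \varepsilon_*$, and then use upper semicontinuity of $\Theta$ from $(A10)$ to transfer this lower bound to $\Theta(\lambda_*, c_*)$. The only cosmetic difference is that you spell out the weak-duality upper bound and the usc argument in more detail than the paper does.
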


\begin{proof}
Let a subsequence $\{ (\lambda_{n_k}, c_{n_k}) \}$ converge to some 
$(\lambda_*, c_*) \in \Lambda \times (0, + \infty)$. Then, in particular, the sequence $\{ \lambda_{n_k} \}$ is bounded
and by Theorem~\ref{thrm:DualValues_AugmLagr_Limit} (see also Remark~\ref{rmrk:AugmLagrMethod_LimitAlongSubseq}) one has
\[
  \liminf_{k \to \infty} \Theta(\lambda_{n_k}, c_{n_k}) \ge \Theta_* - \varepsilon_*.
\]
Hence bearing in mind the fact that the function $\Theta$ is upper semicontinuous by assumption $(A10)$
(see Remark~\ref{rmrk:DualFunc_Concave_usc}) one can conclude that 
$\Theta(\lambda_*, c_*) \ge \Theta_* - \varepsilon_*$, that is, $(\lambda_*, c_*)$ is an $\varepsilon_*$-optimal
solution of the dual problem.
\end{proof}

\begin{corollary}
Let the assumptions of Lemma~\ref{lem:DualLimit} be valid and suppose that the functions $f$ and $\dist(G(\cdot), K)$
are lsc on $Q$, while the augmented Lagrangian $\mathscr{L}(\cdot)$ is lsc on 
$Q \times \Lambda \times (0, + \infty)$. Then any limit point $(x_*, \lambda_*, c_*)$ of the sequence 
$\{ (x_n, \lambda_n, c_n) \}$ is an $2 \varepsilon_*$-saddle point of the augmented Lagrangian, that is,
\begin{equation} \label{eq:EpsilonSaddlePoint}
  \sup_{\lambda \in \Lambda} \mathscr{L}(x_*, \lambda, c_*) - 2 \varepsilon_* \le \mathscr{L}(x_*, \lambda_*, c_*)
  \le \inf_{x \in Q} \mathscr{L}(x, \lambda_*, c_*) + \varepsilon_*.
\end{equation}
\end{corollary}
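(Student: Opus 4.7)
The plan is to split the $2\varepsilon_*$-saddle point inequality \eqref{eq:EpsilonSaddlePoint} into its right-hand and left-hand halves and prove each separately, relying on two pieces already in place: Lemma~\ref{lem:DualLimit}, which yields $\Theta(\lambda_*, c_*) \ge \Theta_* - \varepsilon_*$, and Theorem~\ref{thrm:DualValues_AugmLagr_Limit}, which, since the sequence $\{ c_n \}$ is bounded, guarantees that $\dist(G(x_n), K) \to 0$, $\limsup_{n} \mathscr{L}(x_n, \lambda_n, c_n) \le \Theta_* + \varepsilon_*$, and $\limsup_{n} f(x_n) \le \Theta_* + \varepsilon_*$. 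Let $\{ n_k \}$ denote the subsequence along which $(x_{n_k}, \lambda_{n_k}, c_{n_k}) \to (x_*, \lambda_*, c_*)$.

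For the right inequality $\mathscr{L}(x_*, \lambda_*, c_*) \le \inf_{x \in Q} \mathscr{L}(x, \lambda_*, c_*) + \varepsilon_*$, I would apply the assumed lower semicontinuity of $\mathscr{L}$ at $(x_*, \lambda_*, c_*)$ to obtain $\mathscr{L}(x_*, \lambda_*, c_*) \le \liminf_{k \to \infty} \mathscr{L}(x_{n_k}, \lambda_{n_k}, c_{n_k})$. Using the defining inequality $\mathscr{L}(x_{n_k}, \lambda_{n_k}, c_{n_k}) \le \Theta(\lambda_{n_k}, c_{n_k}) + \varepsilon_{n_k}$, together with the upper semicontinuity of $\Theta$ recorded in Remark~\ref{rmrk:DualFunc_Concave_usc} and the identity $\limsup_n \varepsilon_n = \varepsilon_*$, I would conclude that this liminf is bounded above by $\Theta(\lambda_*, c_*) + \varepsilon_*$. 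Since $\Theta(\lambda_*, c_*) = \inf_{x \in Q} \mathscr{L}(x, \lambda_*, c_*)$, the right inequality follows.

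For the left inequality $\sup_{\lambda \in \Lambda} \mathscr{L}(x_*, \lambda, c_*) \le \mathscr{L}(x_*, \lambda_*, c_*) + 2\varepsilon_*$, I would first establish that $x_*$ is feasible: the lower semicontinuity of $\dist(G(\cdot), K)$ on $Q$ combined with $\dist(G(x_{n_k}), K) \to 0$ yields $G(x_*) \in K$. Assumption $(A1)$ then gives $\mathscr{L}(x_*, \lambda, c_*) \le f(x_*)$ for every $\lambda \in \Lambda$, so $\sup_{\lambda \in \Lambda} \mathscr{L}(x_*, \lambda, c_*) \le f(x_*)$. Next, the lower semicontinuity of $f$ on $Q$ together with $\limsup_n f(x_n) \le \Theta_* + \varepsilon_*$ yields $f(x_*) \le \Theta_* + \varepsilon_*$. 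Finally, Lemma~\ref{lem:DualLimit} gives $\Theta_* - \varepsilon_* \le \Theta(\lambda_*, c_*) \le \mathscr{L}(x_*, \lambda_*, c_*)$, whence $\Theta_* + \varepsilon_* \le \mathscr{L}(x_*, \lambda_*, c_*) + 2\varepsilon_*$, closing the chain.

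The one technical subtlety I anticipate is in the right inequality: because $\varepsilon_n$ is not assumed to tend to zero, one cannot simply pass to the limit in $\mathscr{L}(x_{n_k}, \lambda_{n_k}, c_{n_k}) \le \Theta(\lambda_{n_k}, c_{n_k}) + \varepsilon_{n_k}$ by naive additivity of liminf. Instead, I would argue via an $\varepsilon/\delta$ device: for each $\delta > 0$, extract $k_0$ such that simultaneously $\Theta(\lambda_{n_k}, c_{n_k}) \le \Theta(\lambda_*, c_*) + \delta$ (by usc of $\Theta$) and $\varepsilon_{n_k} \le \varepsilon_* + \delta$ (by definition of $\limsup$) for all $k \ge k_0$, obtain $\mathscr{L}(x_*, \lambda_*, c_*) \le \Theta(\lambda_*, c_*) + \varepsilon_* + 2\delta$, and let $\delta \downarrow 0$. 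Aside from this, the proof is a direct assembly of the two preparatory results with the three continuity hypotheses.
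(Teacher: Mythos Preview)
Your proposal is correct and follows essentially the same line as the paper's proof. The left-hand inequality is handled identically: feasibility of $x_*$ via lower semicontinuity of $\dist(G(\cdot),K)$, then assumption $(A1)$ to bound $\sup_\lambda \mathscr{L}(x_*,\lambda,c_*)$ by $f(x_*)$, then $f(x_*)\le \Theta_*+\varepsilon_*$ combined with Lemma~\ref{lem:DualLimit} to close the chain. For the right-hand inequality there is a minor packaging difference: you pass through the upper semicontinuity of $\Theta$ (Remark~\ref{rmrk:DualFunc_Concave_usc}) to get $\mathscr{L}(x_*,\lambda_*,c_*)\le \Theta(\lambda_*,c_*)+\varepsilon_*$ directly, whereas the paper fixes an arbitrary $x\in Q$, passes to the limit in $\mathscr{L}(x_{n_k},\lambda_{n_k},c_{n_k})\le \mathscr{L}(x,\lambda_{n_k},c_{n_k})+\varepsilon_{n_k}$, and uses assumption $(A10)$ on the right-hand side. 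Since the usc of $\Theta$ is itself a consequence of $(A10)$, the two routes are equivalent.

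Two small remarks. First, since Lemma~\ref{lem:DualLimit} drops assumption $(B2)$, your appeal to Theorem~\ref{thrm:DualValues_AugmLagr_Limit} should go through Remark~\ref{rmrk:AugmLagrMethod_LimitAlongSubseq} applied to the convergent (hence bounded) subsequence $\{\lambda_{n_k}\}$; write $\limsup_k f(x_{n_k})$ rather than $\limsup_n f(x_n)$. Second, your $\varepsilon/\delta$ device is not needed: the elementary inequality $\liminf a_n \le \limsup(b_n+c_n)\le \limsup b_n+\limsup c_n$ already gives $\liminf_k \mathscr{L}(x_{n_k},\lambda_{n_k},c_{n_k})\le \Theta(\lambda_*,c_*)+\varepsilon_*$ in one step.
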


\begin{proof}
Suppose that a subsequence $\{ (x_{n_k}, \lambda_{n_k}, c_{n_k}) \}$ converges to some triplet
$(x_*, \lambda_*, c_*) \in Q \times \Lambda \times (0, + \infty)$. Then by Theorem~\ref{thrm:DualValues_AugmLagr_Limit}
(see also Remark~\ref{rmrk:AugmLagrMethod_LimitAlongSubseq}) one has $\dist(G(x_{n_k}), K) \to 0$ as $k \to \infty$
and
\[
  \limsup_{k \to \infty} f(x_{n_k}) \le \Theta_* + \varepsilon_*, \quad
  \limsup_{k \to \infty} \mathscr{L}(x_{n_k}, \lambda_{n_k}, c_{n_k}) \le \Theta_* + \varepsilon_*. 
\]
Therefore, by the lower semicontinuity assumptions one has $G(x_*) \in K$ and
\[
  f(x_*) \le \Theta_* + \varepsilon_*, \quad
  \mathscr{L}(x_*, \lambda_*, c_*) \le \Theta_* + \varepsilon_*.
\]
On the other hand, Proposition~\ref{prp:WeakDuality} and Lemma~\ref{lem:DualLimit} imply that
\[
  \Theta_* \le f(x_*), \quad 
  \Theta_* - \varepsilon_* \le \Theta(\lambda_*, c_*) \le \mathscr{L}(x_*, \lambda_*, c_*).
\]
Hence with the use of assumption $(A1)$ one gets that
\[
  \sup_{\lambda \in \Lambda} \mathscr{L}(x_*, \lambda, c_*) \le f(x_*)
  \le \mathscr{L}(x_*, \lambda_*, c_*) + 2 \varepsilon_*,
\]
that is, the first inequality in \eqref{eq:EpsilonSaddlePoint} is satisfied.

By the definition of $x_n$ one has
\[
  \mathscr{L}(x_{n_k}, \lambda_{n_k}, c_{n_k}) \le \mathscr{L}(x, \lambda_{n_k}, c_{n_k}) + \varepsilon_{n_k}
  \quad \forall k \in \mathbb{N} \: \forall x \in Q,
\]
which implies that
\[
  \liminf_{k \to \infty} \mathscr{L}(x_{n_k}, \lambda_{n_k}, c_{n_k})
  \le \limsup_{k \to \infty} \mathscr{L}(x, \lambda_{n_k}, c_{n_k}) + \varepsilon_* \quad \forall x \in Q.
\]
Hence with the use of assumption $(A10)$ and the fact that the function $\mathscr{L}(\cdot)$ is lsc one obtains
\[
  \mathscr{L}(x_*, \lambda_*, c_*) \le \mathscr{L}(x, \lambda_*, c_*) + \varepsilon_*
  \quad \forall x \in Q,
\]
that is, the second inequality in \eqref{eq:EpsilonSaddlePoint} is satisfied.
\end{proof}

\begin{remark}
{(i)~It should be noted that the augmented Lagrangian is lsc on $Q \times \Lambda \times (0, + \infty)$ for all
particular examples of the function $\Phi$ from Section~\ref{sect:Examples}, except for Rockafellar-Wets' augmented
Lagrangian, if the function $f$ is lsc on $Q$ and the function $G$ is continuous on this set. In the case of
Rockafellar-Wets' augmented Lagrangian (Example~\ref{ex:RockafellarWetsAugmLagr}) one needs to impose some additional
assumptions, such as $\sigma(\cdot) = 0.5 \| \cdot \|^2$ or $K = \{ 0 \}$ and $\sigma(\cdot) = \| \cdot \|$. Let us
also mention that in the case of inequality constrained problems, instead of assuming that $G$ is continuous, it is
sufficient to suppose that the functions $g_i$ defining the constrained $g_i(x) \le 0$ are only lower semicontinuous.
}

{(ii)~If the augmented Lagrangian $\mathscr{L}(\cdot)$ is continuous on $Q \times \Lambda \times (0, + \infty)$, then
one can replace $2 \varepsilon_*$ in the first inequality in \eqref{eq:EpsilonSaddlePoint} with $\varepsilon_*$. Indeed,
in this case by Theorem~\ref{thrm:DualValues_AugmLagr_Limit} one has $\mathscr{L}(x_*, \lambda_*, c_*) \ge \Theta_*$
and, therefore, $f(x_*) \le \mathscr{L}(x_*, \lambda_*, c_*) + \varepsilon_*$, which implies the required result.
}
\end{remark}

With the use of Lemma~\ref{lem:DualLimit} we can easily show how dual convergence is connected with existence of optimal
dual solutions/global saddle points of the augmented Lagrangian.

\begin{theorem}[dual convergence vs. existence of optimal dual solutions] \label{thrm:DualConv_vs_OptDualSolExist}
Let $\{ (x_n, \lambda_n, c_n) \}$ be the sequence generated by the model augmented Lagrangian method, and suppose that
the following conditions are satisfied:
\begin{enumerate}
\item{assumptions $(A1)$, $(A7)$, $(A10)$, $(A12)_s$--$(A14)_s$, and $(A15)$ hold true;}

\item{assumptions $(B1)$, $(B3)$, and $(B4)$ hold true;}

\item{$\varepsilon_n \to 0$ as $n \to \infty$;}

\item{for any bounded set $\Lambda_0 \subset \Lambda$ there exists $\tau > 0$ such that the function 
$\inf_{\lambda \in \Lambda_0} \Phi(G(\cdot), \lambda, \tau)$ is bounded below on $Q$.} 
\end{enumerate}
Then for the sequence of penalty parameters $\{ c_n \}$ to be bounded and the sequence of multipliers $\{ \lambda_n \}$
to have a limit point it is necessary that a globally optimal solution of the dual problem $(\mathcal{D})$ exists.
\end{theorem}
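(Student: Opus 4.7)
The plan is to reduce the statement to a direct application of Lemma~\ref{lem:DualLimit} combined with the hypothesis that $\varepsilon_n \to 0$. Since the conclusion sought is the existence of an optimal dual solution, I want to produce such a solution as a limit point of a subsequence of $\{(\lambda_n, c_n)\}$.

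First I would extract a convergent subsequence. By hypothesis there exist a subsequence $\{\lambda_{n_k}\}$ and a point $\lambda_* \in \Lambda$ (recall $\Lambda$ is closed) such that $\lambda_{n_k} \to \lambda_*$. Since $\{c_n\}$ is bounded and $c_n \ge c_{min} > 0$, the subsequence $\{c_{n_k}\} \subset [c_{min}, +\infty)$ contains a convergent sub-subsequence. Passing to this sub-subsequence, which I again denote $\{(\lambda_{n_k}, c_{n_k})\}$, one obtains $(\lambda_{n_k}, c_{n_k}) \to (\lambda_*, c_*)$ with $c_* \ge c_{min} > 0$, so $(\lambda_*, c_*) \in \Lambda \times (0, +\infty)$ is a candidate optimal dual solution.

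Next I would verify that the hypotheses of Lemma~\ref{lem:DualLimit} are in force along this subsequence. All the basic assumptions $(A1)$, $(A7)$, $(A10)$, $(A12)_s$--$(A14)_s$, and $(A15)$ are assumed directly; assumptions $(B1)$, $(B3)$, and $(B4)$ are assumed; the coercivity-type boundedness below hypothesis on $\Phi(G(\cdot), \lambda, \tau)$ is assumed; and although $(B2)$ is not postulated globally, the chosen subsequence $\{\lambda_{n_k}\}$ is bounded (being convergent), so Remark~\ref{rmrk:AugmLagrMethod_LimitAlongSubseq}(i) allows us to apply Theorem~\ref{thrm:DualValues_AugmLagr_Limit} and thus Lemma~\ref{lem:DualLimit} to this subsequence; moreover, Remark~\ref{rmrk:AugmLagrMethod_LimitAlongSubseq}(ii) ensures the boundedness below of $\{f(x_{n_k})\}$ that is implicitly needed. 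Finally, since $\varepsilon_n \to 0$ by hypothesis, one has $\varepsilon_* = \limsup_{n \to \infty} \varepsilon_n = 0$.

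Applying Lemma~\ref{lem:DualLimit} to the convergent subsequence then yields that the limit point $(\lambda_*, c_*)$ is a $0$-optimal solution of the augmented dual problem, that is, $\Theta(\lambda_*, c_*) \ge \Theta_*$. Combined with weak duality (Proposition~\ref{prp:WeakDuality}) this gives $\Theta(\lambda_*, c_*) = \Theta_*$, so $(\lambda_*, c_*)$ is a globally optimal solution of $(\mathcal{D})$, completing the proof. I do not expect any substantive obstacle here; the only subtlety is the bookkeeping around the two-step passage to a subsequence and the invocation of the subsequence version of the main convergence theorem via Remark~\ref{rmrk:AugmLagrMethod_LimitAlongSubseq}.
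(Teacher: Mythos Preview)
Your proposal is correct and follows essentially the same route as the paper: extract a limit point $(\lambda_*,c_*)$ of $\{(\lambda_n,c_n)\}$ and invoke Lemma~\ref{lem:DualLimit} with $\varepsilon_* = 0$ to conclude that it is an optimal dual solution. One tiny quibble: the inequality $\Theta(\lambda_*,c_*)\le\Theta_*$ holds by the definition of $\Theta_*$ as a supremum, not by weak duality (Proposition~\ref{prp:WeakDuality}), so that reference is superfluous; otherwise the argument is fine.
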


\begin{proof}
By Lemma~\ref{lem:DualLimit}, under the assumptions of the theorem any limit point of the sequence 
$\{ (\lambda_n, c_n) \}$ is a globally optimal solution of the dual problem. Therefore, for the existence of such 
limit (or, equivalently, for the sequence $\{ c_n \}$ to be bounded and the sequence $\{ \lambda_n \}$ to have a limit
point) it is necessary that a globally optimal solution of the dual problem exists.
\end{proof}

\begin{theorem}[full convergence vs. existence of global saddle points] \label{thrm:FullConv_vs_GSP_Exists} 
Let all assumptions of Theorem~\ref{thrm:DualConv_vs_OptDualSolExist} be valid and suppose that the functions $f$ and
$\dist(G(\cdot), K)$ are lsc on the set $Q$. The for the sequence of penalty parameters $\{ c_n \}$ to be bounded and
the sequence $\{ (x_n, \lambda_n) \}$ to have a limit point it is necessary that there exists a global saddle
point of the augmented Lagrangian $\mathscr{L}(\cdot)$. Moreover, for any limit point $\{ (x_*, \lambda_*, c_*) \}$ of
the sequence $\{ (x_n, \lambda_n, c_n) \}$ (if such point exists) the pair $(x_*, \lambda_*)$ is a global saddle point
of the augmented Lagrangian and $c_* \ge c_*(x_*, \lambda_*)$.
\end{theorem}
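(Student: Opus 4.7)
The plan is to focus on the second (stronger) statement: show that any limit point $(x_*, \lambda_*, c_*)$ of $\{(x_n, \lambda_n, c_n)\}$ yields a global saddle point $(x_*, \lambda_*)$ with $c_* \ge c_*(x_*, \lambda_*)$. The first statement is then immediate: if $\{c_n\}$ is bounded and $(x_n, \lambda_n)$ has a limit point along some subsequence, then since the corresponding $c_{n_k}$ are bounded and bounded below by $c_{\min} > 0$, a further subsequence converges to some $c_* > 0$, giving a limit point of the full triple sequence, and the second statement produces the required global saddle point.

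For the second statement, suppose $(x_{n_k}, \lambda_{n_k}, c_{n_k}) \to (x_*, \lambda_*, c_*)$. Since $\{\lambda_{n_k}\}$ is bounded and $\{c_{n_k}\}$ is bounded (hence the ``bounded'' branch of assumption $(B4)$ applies trivially to this subsequence), I invoke Theorem~\ref{thrm:DualValues_AugmLagr_Limit} along the subsequence (via Remark~\ref{rmrk:AugmLagrMethod_LimitAlongSubseq}, which grants the conclusion when only a bounded subsequence of multipliers is available). Because $\varepsilon_n \to 0$, we get $\varepsilon_* = 0$, so along this subsequence
\[
  \dist(G(x_{n_k}), K) \to 0, \quad f(x_{n_k}) \to \Theta_*, \quad \Theta(\lambda_{n_k}, c_{n_k}) \to \Theta_*.
\]
The lower semicontinuity of $\dist(G(\cdot), K)$ forces $G(x_*) \in K$, so $x_*$ is feasible. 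Lower semicontinuity of $f$ combined with weak duality (Proposition~\ref{prp:WeakDuality}) gives
\[
  f_* \le f(x_*) \le \liminf_{k \to \infty} f(x_{n_k}) = \Theta_* \le f_*,
\]
so $f(x_*) = f_* = \Theta_*$: the zero duality gap property holds and $x_*$ is globally optimal for $(\mathcal{P})$.

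Next, Lemma~\ref{lem:DualLimit}, applied to the same subsequence (its hypotheses are exactly those of Theorem~\ref{thrm:DualConv_vs_OptDualSolExist} that we have assumed), shows that $(\lambda_*, c_*)$ is an $\varepsilon_*$-optimal dual solution; since $\varepsilon_* = 0$, $(\lambda_*, c_*)$ is a globally optimal solution of $(\mathcal{D})$. With the zero duality gap established and a primal optimum $x_*$ and dual optimum $(\lambda_*, c_*)$ in hand, I apply part~2 of Theorem~\ref{thrm:OptDualSol_vs_GlobalSaddlePoints} (whose validity only requires $(A1)$ and $(A7)$, by Remark~\ref{rmrk:OptDualSol_vs_GlobalSaddlePoints}) to conclude that $(x_*, \lambda_*)$ is a global saddle point of $\mathscr{L}(\cdot)$ with $c_*(x_*, \lambda_*) \le c_*$.

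The only delicate point I foresee is justifying that Theorem~\ref{thrm:DualValues_AugmLagr_Limit} may be applied despite the fact that assumption $(B2)$ (global boundedness of $\{\lambda_n\}$) is absent from the current hypotheses; this is handled precisely by Remark~\ref{rmrk:AugmLagrMethod_LimitAlongSubseq}, and this is the one step that a careful reader will want verified. Everything else is a bookkeeping combination of the duality results (Proposition~\ref{prp:WeakDuality}, Lemma~\ref{lem:DualLimit}, Theorem~\ref{thrm:OptDualSol_vs_GlobalSaddlePoints}) with the convergence theorem and the lower semicontinuity hypotheses.
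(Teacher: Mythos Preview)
Your proposal is correct and follows essentially the same route as the paper's proof: use Theorem~\ref{thrm:DualValues_AugmLagr_Limit} (via Remark~\ref{rmrk:AugmLagrMethod_LimitAlongSubseq}) together with the lower semicontinuity assumptions to show that $x_*$ is a globally optimal primal solution and that the zero duality gap property holds, apply Lemma~\ref{lem:DualLimit} to conclude that $(\lambda_*, c_*)$ is an optimal dual solution, and then invoke part~\ref{st:DualSol_GSP} of Theorem~\ref{thrm:OptDualSol_vs_GlobalSaddlePoints} to obtain the global saddle point with $c_* \ge c_*(x_*, \lambda_*)$. Your write-up in fact spells out in more detail than the paper does why $f_* = \Theta_*$ follows, and your reduction of the first claim to the second via extracting a convergent subsequence of $\{c_{n_k}\}$ is the natural argument the paper leaves implicit.
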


\begin{proof}
Let $(x_*, \lambda_*, c_*)$ be a limit point of the sequence $\{ (x_n, \lambda_n, c_n) \}$. Then by 
Lemma~\ref{lem:DualLimit} the pair $(\lambda_*, c_*)$ is an optimal dual solution. In turn, by applying
Theorem~\ref{thrm:DualValues_AugmLagr_Limit} (see also Remark~\ref{rmrk:AugmLagrMethod_LimitAlongSubseq}) one can
readily verify that the zero duality gap property is satisfied and $x_*$ is a globally optimal
solution of the problem $(\mathcal{P})$. Therefore, by Theorem~\ref{thrm:OptDualSol_vs_GlobalSaddlePoints} the pair
$(x_*, \lambda_*)$ is a global saddle point of $\mathscr{L}(x, \lambda, c)$ and $c_* \ge c_*(x_*, \lambda_*)$.
Consequently, for the existence of a limit point of the sequence $\{ (x_n, \lambda_n, c_n) \}$ it is necessary
that there exists a global saddle point of the augmented Lagrangian.
\end{proof}

In the case when the space $Y$ is reflexive one can prove somewhat stronger versions of the previous theorems that
uncover a connection between the \textit{boundedness} of the sequences of multipliers and penalty parameters and 
the existence of optimal dual solutions/global saddle points.

\begin{theorem}[boundedness vs. existence of optimal dual solutions] \label{thrm:Boundedness_vs_OptDualSolExist} 
Suppose that $\{ (x_n, \lambda_n, c_n) \}$ is the sequence generated by the model augmented Lagrangian method, and
let the following conditions be satisfied:
\begin{enumerate}
\item{the space $Y$ is reflexive;}

\item{assumptions $(A1)$, $(A7)$, $(A9)_s$, $(A10)$, $(A12)_s$--$(A14)_s$, and $(A15)$ hold true;}

\item{assumptions $(B1)$, $(B3)$, and $(B4)$ hold true;}

\item{the sequence $\{ \varepsilon_n \}$ is bounded and $\limsup_{n \to \infty} \varepsilon_n = \varepsilon_*$.}

\item{for any bounded set $\Lambda_0 \subset \Lambda$ there exists $\tau > 0$ such that the function 
$\inf_{\lambda \in \Lambda_0} \Phi(G(\cdot), \lambda, \tau)$ is bounded below on $Q$.} 
\end{enumerate}
Then the following statements hold true:
\begin{enumerate}
\item{any weakly limit point of the sequence $\{ (\lambda_n, c_n) \}$ (if such point exists) is an
$\varepsilon_*$-optimal solution of the problem $(\mathcal{D})$;
}

\item{if $\varepsilon_n \to 0$ as $n \to \infty$, then for the boundedness of the sequence $\{ (\lambda_n, c_n) \}$ it
is necessary that there exists a globally optimal solution of the augmented dual problem $(\mathcal{D})$;
}

\item{if, in addition, $X$ is reflexive, $Q$ is weakly sequentially closed, the functions $f$ and $\dist(G(\cdot), K)$
are weakly sequentially lsc on $Q$, and $\varepsilon_n \to 0$ as $n \to \infty$, then for the boundedness of 
the sequence $\{ (x_n, \lambda_n, c_n) \}$ it is necessary that there exists a global saddle point of the augmented
Lagrangian; furthermore, for any weakly limit point $(x_*, \lambda_*, c_*)$ of the sequence 
$\{ (x_n, \lambda_n, c_n) \}$ the pair $(x_*, \lambda_*)$ is a global saddle point of $\mathscr{L}(x, \lambda, c)$ and
$c_* \ge c_*(x_*, \lambda_*)$.
}
\end{enumerate}
\end{theorem}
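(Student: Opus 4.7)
The plan is to mirror the arguments of Theorems~\ref{thrm:DualConv_vs_OptDualSolExist} and \ref{thrm:FullConv_vs_GSP_Exists}, replacing the norm-topology compactness used there with weak (sequential) compactness supplied by reflexivity via the Eberlein--\v{S}mulian theorem. The decisive new ingredient relative to Lemma~\ref{lem:DualLimit} is assumption $(A9)_s$: it is precisely what is needed to transfer the one-sided estimate for the dual function from norm to weak limits.

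First I would verify the following bridge lemma: under $(A9)_s$ and $(A10)$ the dual function $\Theta$ is weakly sequentially upper semicontinuous on every norm-closed convex subset of $\Lambda \times (0, + \infty)$. Indeed, Remark~\ref{rmrk:DualFunc_Concave_usc} tells us that $\Theta$ is concave and norm upper semicontinuous, so for every $\alpha \in \mathbb{R}$ the upper level set $\{ (\lambda, c) \mid \Theta(\lambda, c) \ge \alpha \}$ intersected with such a subset is convex and norm closed, hence weakly closed by Mazur's theorem. For Statement~1 I extract a subsequence $(\lambda_{n_k}, c_{n_k})$ weakly converging to $(\lambda_*, c_*)$; weak convergence yields boundedness, the lower bound $c_n \ge c_{\min}$ passes to the limit so $c_* \ge c_{\min} > 0$, and $\lambda_* \in \Lambda$ holds because the closed convex cone $\Lambda$ is weakly closed. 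Now Theorem~\ref{thrm:DualValues_AugmLagr_Limit} applied to this bounded subsequence (see Remark~\ref{rmrk:AugmLagrMethod_LimitAlongSubseq}) gives $\liminf_k \Theta(\lambda_{n_k}, c_{n_k}) \ge \Theta_* - \varepsilon_*$, and the bridge lemma then promotes this to $\Theta(\lambda_*, c_*) \ge \Theta_* - \varepsilon_*$.

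Statement~2 follows immediately: if $\{ (\lambda_n, c_n) \}$ is bounded, reflexivity of $Y$ (hence of $Y^*$, hence of $Y^* \times \mathbb{R}$) combined with Eberlein--\v{S}mulian produces a weakly convergent subsequence, and with $\varepsilon_n \to 0$ we have $\varepsilon_* = 0$, so Statement~1 identifies its weak limit as an optimal dual solution. For Statement~3 I further extract a subsequence so that $x_{n_k} \rightharpoonup x_*$ weakly in $X$; weak sequential closedness of $Q$ places $x_*$ in $Q$. Theorem~\ref{thrm:DualValues_AugmLagr_Limit} (with $\varepsilon_* = 0$) tells us $\dist(G(x_n), K) \to 0$ and $\limsup_k f(x_{n_k}) \le \Theta_*$; weak sequential lower semicontinuity of $\dist(G(\cdot), K)$ yields feasibility $G(x_*) \in K$, and weak sequential lower semicontinuity of $f$ together with weak duality (Proposition~\ref{prp:WeakDuality}) gives $f_* \le f(x_*) \le \Theta_* \le f_*$. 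Hence the zero duality gap property holds, $x_*$ solves $(\mathcal{P})$, and $(\lambda_*, c_*)$ solves $(\mathcal{D})$, so part~\ref{st:DualSol_GSP} of Theorem~\ref{thrm:OptDualSol_vs_GlobalSaddlePoints} furnishes the global saddle point $(x_*, \lambda_*)$ with $c_*(x_*, \lambda_*) \le c_*$.

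The main obstacle is really only the bridge lemma, and even that is a two-line consequence of Mazur once one realizes that the norm upper semicontinuity granted by $(A10)$ alone is insufficient for weak limits and must be paired with the concavity granted by $(A9)_s$. This also clarifies why Theorem~\ref{thrm:Boundedness_vs_OptDualSolExist}, unlike Theorems~\ref{thrm:DualConv_vs_OptDualSolExist} and \ref{thrm:FullConv_vs_GSP_Exists}, genuinely requires $(A9)_s$: without concavity of $\Theta$ there is no way to replace the norm-closed upper level sets by weakly closed ones, and the passage to weak cluster points breaks down.
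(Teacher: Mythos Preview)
Your proposal is correct and follows essentially the same approach as the paper's own proof, which simply states that one repeats the arguments of Lemma~\ref{lem:DualLimit} and Theorems~\ref{thrm:DualConv_vs_OptDualSolExist} and \ref{thrm:FullConv_vs_GSP_Exists} using (i) weak sequential compactness in reflexive spaces, (ii) concavity and upper semicontinuity of $\Theta$ from $(A9)_s$ and $(A10)$, and (iii) the fact that an usc concave function is weakly sequentially usc. Your ``bridge lemma'' via Mazur's theorem is exactly the mechanism behind item~(iii), and your explanation of why $(A9)_s$ is indispensable here, unlike in the norm-topology results, is a welcome clarification.
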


\begin{proof}
The proof of this theorem almost literally repeats the proofs of Lemma~\ref{lem:DualLimit} and
Theorems~\ref{thrm:DualConv_vs_OptDualSolExist} and \ref{thrm:FullConv_vs_GSP_Exists}. One only needs to use the facts 
that (i) any bounded sequence from a reflexive Banach space has a weakly convergent subsequence, (ii) the augmented
dual function $\Theta(\lambda, c)$ is usc and concave by assumptions $(A9)_s$ and $(A10)$, and (iii) any usc
concave function defined on a Banach space is also weakly sequentially usc. 
\end{proof}

\begin{remark}
It should be underlined that the previous theorem provides necessary conditions for the boundedness of sequences
$\{ (x_n, \lambda_n, c_n) \}$ generated by the model augmented Lagrangian method \textit{irrespective} of the way in
which the sequences of multipliers $\{ \lambda_n \}$ and penalty parameters $\{ c_n \}$ are updated. As long as the
assumptions of the theorem are satisfied, the existence of a global saddle point is a necessary conditions for the
boundedness of the sequence $\{ (x_n, \lambda_n, c_n) \}$. Similarly, the existence of an optimal dual solution is 
a necessary condition for the boundedness of the sequences $\{ \lambda_n, \}$ and $\{ c_n \}$, \textit{regardless} of
the way in which they are updated.
\end{remark}

Let us finally return to Example~\ref{ex:DijointSetsOfMultipliers} in order to demonstrate how one can apply general
convergence results obtained in this seciton to better understand and predict behaviour of primal-dual augmented
Lagrangian methods.

\begin{example} \label{ex:LackOfDualSol_vs_DualConvergence}
Let $X = Y = \mathbb{R}$. Consider the following optimization problem:
\begin{equation} \label{prob:DisjointMultiplierSetsRepeat}
  \min \: f(x) = - x^2 \quad \text{subject to} \enspace g_1(x) = x - 1 \le 0, \enspace g_2(x) = - x - 1 \le 0.
\end{equation}
Let $\mathscr{L}(\cdot)$ be the Hestenes-Powell-Rockafellar augmented Lagrangian for this problem (see
\eqref{eq:HPRLagr_DisjMultSetsProblem}). As was shown in Example~\ref{ex:DijointSetsOfMultipliers}, the zero duality
gap property holds true in this case, but the augmented dual problem has no globally optimal solutions.

Let the multipliers and the penalty parameter be updated in accordance with the classic augmented Lagrangian method
(see, e.g. \cite[Algorithm~4.1]{BirginMartinez}), that is, 
\begin{equation} \label{eq:MultipUpdate_DijointMult}
  \lambda_{(n + 1), 1} = \max\big\{ \lambda_{n1} + c_n g_1(x_n), 0 \big\}, \quad
  \lambda_{(n + 1), 2} = \max\big\{ \lambda_{n2} + c_n g_2(x_n), 0 \big\}
\end{equation}
and
\begin{equation} \label{eq:PenaltyUpdate_DijointMult}
  c_{n + 1} = \begin{cases}
    c_n, & \text{if } n = 0 \text{ or } \| V_n \| \le \tau \| V_{n - 1} \|,
    \\
    \gamma c_n, & \text{otherwise},
  \end{cases}
\end{equation}
where $\tau \in (0, 1)$ and $\gamma > 1$ are fixed parameters and
\[
  V_{ni} := \min\left\{ - g_i(x_n), \frac{\mu_{ni}}{c_n} \right\}, \quad i \in \{ 1, 2 \}.
\]
One can readily check that assumptions $(B1)$, $(B3)$, and $(B4)$ hold true in this case, if $c_{\min} \ge 2$. Hence
with the use of Theorem~\ref{thrm:Boundedness_vs_OptDualSolExist} one can conclude that the sequence 
$\{ (\lambda_n, c_n) \}$ has no bounded subsequence, which means that either $c_n \to + \infty$ or 
$\| \lambda_n \| \to + \infty$ as $n \to \infty$. Let us provide a numerical example to illustrate this claim.

\begin{table} [ht!]
\caption{First 10 iterations of the model augmented Lagrangian method for problem
\eqref{prob:DisjointMultiplierSetsRepeat}.}
\begin{tabular}{| c | c | c | c | c | c | c | c | c | c | c |} 
  \hline
  $n$ & 0 & 1 & 2 & 3 & 4 & 5 & 6 & 7 & 8 & 9 \\
  \hline
  $x_n$ & 2 & -3 & 1.5 & -1.5 & 1.2 & -1.2 & 1.0909 & -1.0909 & 1.0435 & -1.0435 \\
  \hline
  $\lambda_{n1}$ & 1 & 4 & 0 & 3 & 0 & 2.4 & 0 & 2.1818 & 0 & 2.087 \\
  \hline
  $\lambda_{n2}$ & 1 & 0 & 6 & 0 & 3 & 0 & 2.4 & 0 & 2.1818 & 0 \\
  \hline
  $c_n$ & 3 & 3 & 6 & 6 & 12 & 12 & 24 & 24 & 48 & 48 \\
  \hline
\end{tabular}
\label{tab:AugmLagrMethod_DijointMultipliers}
\end{table}

Let $\tau = 0.9$, $\gamma = 2$, $c_0 = 3$, and $\lambda_0 = (1, 1)$. First 10 iterations of the model augmented 
Lagrangian method with multiplier updates \eqref{eq:MultipUpdate_DijointMult} and penalty parameter updates
\eqref{eq:PenaltyUpdate_DijointMult} are given in Table~\ref{tab:AugmLagrMethod_DijointMultipliers}. Let us note that
the points of global minimum of the augmented Lagrangian were computed analytically. The results of computer simulation
have shown that $c_n \to \infty$, $\lambda_{2n} \to (2, 0)$, $\lambda_{2n + 1} \to (0, 2)$, $x_{2n} \to 1$, and 
$x_{2n + 1} \to -1$ as $n \to \infty$ (this fact can be proved analytically, but its proof is rather cumbersome, which
is why we do not present it here). Thus, the iterations of the method oscillate between gradually shrinking
neighbourhoods of two globally optimal solutions of problem \eqref{prob:DisjointMultiplierSetsRepeat} and the KKT
multipliers corresponding to these solutions, while the penalty parameter increases unboundedly.
\end{example}

\begin{remark}
The convergence theory for the model augmented Lagrangian me\-thod presented in this paper generalizes and unifies many
existing results on convergence of augmented Lagrangian methods. In particular, many such results either directly
follow from Theorems~\ref{thrm:DualValues_AugmLagr_Limit}, \ref{thrm:GlobalConv_vs_DualityGap}, and
\ref{thrm:FullConv_vs_GSP_Exists} and Lemma~\ref{lem:DualLimit} or can be easily deduced from them, including
\cite[Proposition~2.1]{Bertsekas}, \cite[Theorem~6.1, part (1)]{Polyak2001}, \cite[Theorems~1--3 and 7]{LuoSunLi},
\cite[Theorems~2.4 and 3.1]{LuoSunWu}, \cite[Theorems~4 and 7]{LuoWuChen2012}, \cite[Theorem~5.2]{BirginMartinez},
\cite[Theorem~5, part (iii)]{CordovaOliveiraSagastizabal}, etc.
\end{remark}

\bibliographystyle{abbrv}  
\bibliography{AugmLagrMethods_bibl}

\end{document}